\tikzset{>=stealth}
\tikzset{
external/system call={
xelatex \tikzexternalcheckshellescape
-halt-on-error -interaction=batchmode --shell-escape --enable-write18
-jobname "\image" "\texsource"}
}
\def\nextAngle{0}
\tikzset{
next angle/.style={
in=#1+180,
out=\nextAngle,
prefix after command= {\pgfextra{\def\nextAngle{#1}}}
},
start angle/.style={
out=#1,
nangle=#1,
},
nangle/.code={
\def\nextAngle{#1}
}
}
\theoremstyle{plain}
\newtheorem{que}{Question}
\newtheorem{thm}{Theorem}[section]
\newtheorem{lemma}[thm]{Lemma}
\newtheorem{cor}[thm]{Corollary}
\newtheorem{prop}[thm]{Proposition}
\newtheorem{assump}[thm]{Assumption}
\theoremstyle{definition}
\newtheorem{example}[thm]{Example}
\newtheorem{mydef}[thm]{Definition}
\newtheorem{algo}[thm]{Algorithm}
\theoremstyle{remark}
\newtheorem{remark}[thm]{Remark}
\newcommand{\Xm}{\overline{X}}
\newcommand{\Ym}{\overline{Y}}
\newcommand{\bbz}{\mathbb{Z}}
\newcommand{\abs}[1]{\left| #1 \right|}
\newcommand{\set}[1]{\left\{ #1 \right\}}
\newcommand{\bracket}[1]{\left( #1 \right)}
\newcommand{\class}[1]{\langle #1 \rangle}
\begin{document}

\title{Knot Floer homology and the fundamental group of $ (1,1) $ knots}

\author{Matthew Hedden}
\address{Mathematics Department, Michigan State University, East Lansing, MI 48824, USA }
\email{mhedden@math.msu.edu}

\author{Jiajun Wang}
\address{LMAM, School of Mathematics Sciences, Peking University, Beijing, 100871, P. R. China}
\email{wjiajun@pku.edu.cn}

\author{Xiliu Yang}
\address{LMAM, School of Mathematics Sciences, Peking University, Beijing, 100871, P. R. China}
\email{yangxiliu@pku.edu.cn}

\date{\today}

\maketitle

\begin{abstract}
We give an algorithm for computing the knot Floer homology of a $ (1,1) $ knot from a particular presentation of its fundamental group.
\end{abstract}


\section{Introduction}

Heegaard Floer homology (introduced by Peter Ozsv\'ath and Zolt\'an Szab\'o \cite{OS01a,OS01b}) provides various topological invariants for three- and four-manifolds. A  null homologous knot in a three-manifold induces a filtration on the Heegaard Floer chain complex and its homology, called the knot Floer homology, which was introduced by Peter Ozsv\'ath and Zolt\'an Szab\'o \cite{OS04a} and Jacob Rasmussen \cite{Ras03}  independently.

The fundamental group is an important invariant for three-manifolds. The geometrization theorem (proposed by William Thurston \cite{Thu82} and proved by Grisha Perelman \cite{Per02, Per03a, Per03b}) implies that the fundamental group completely determines a closed, orientable, irreducible, three-manifold up to orientation  except for lens spaces (see \cite{AFW15} for details). Hence the (hat version) Heegaard Floer homology can be completely determined by the fundamental group of the three-manifold. 

Heegaard Floer homology has a close relationship with the fundamental group. For an integer homology three-sphere $Y$, the Euler characteristic of $HF^+_{\rm red}(Y)$ minus half its \emph{correction term} equals Casson's invariant $\lambda(Y)$ under certain normalization \cite{OS03}, and Casson's invariant is the algebraic counting of $SU(2)$ representations of $\pi_1(Y)$. On the homology level, the instanton Floer homology is a categorification of Casson's invariant \cite{Flo88a} and its generators are the $SU(2)$ representations of the fundamental group \cite{Flo88a, KM11}. Seiberg-Witten Floer homology is isomorphic to Heegaard Floer homology (by the work of Hutchings \cite{Hut02}, Hutchings-Taubes \cite{HT07,HT09}, Taubes \cite{Tau10a,Tau10b,Tau10c,Tau10d,Tau10e}, and Kutluhan-Lee-Taubes \cite{KLT10a,KLT10b,KLT10c,KLT11,KLT12} or Colin-Ghiggini-Honda \cite{CGH12b, CGH12c,CGH12a}), and Witten \cite{Wit94} conjectured a relation between the Seiberg-Witten and Donaldson invariants.

It is interesting to find a concrete connection between the fundamental group and the Heegaard Floer homology. In \cite{OS04c}, Ozsv\'ath and Szab\'o asked the following two closely related questions:

\begin{que}\cite[Question 7]{OS04c}\label{question:OS_knots}
	Let $ K $ be a knot in $ S^3 $. Is there an explicit relationship between the fundamental group of the knot complement $ S^3 \setminus K $ and the knot Floer homology $ \widehat{HFK}(S^3, K) $?
\end{que}

\begin{que}\cite[Question 8]{OS04c}\label{question:OS_three_manifold}
Is there an explicit relationship between the Heegaard Floer homology and the fundamental group of a three-manifold?
\end{que}

We study Question \ref{question:OS_knots} for $ (1,1) $ knots. $ (1,1) $ knots are those knots which can be placed in one-bridge position with respect to a genus one Heegaard splitting of the three-sphere. $(1,1)$ knots form a large family of knots: torus knots and 2-bridge knots are all $(1,1)$ knots. Fujii \cite{Fuj96} showed that the Alexander polynomial of any knot can be realized as the Alexander polynomial of some $ (1,1) $ knot.

From the perspective of knot Floer homology, $ (1,1) $ knots are particularly appealing. It was observed by Goda, Matsuda, and Morifuji in \cite{GMM05} that $ (1,1) $ knots are exactly those knots which can be presented by a genus one doubly-pointed Heegaard diagram and their knot Floer homology can be computed combinatorially (see also \cite{OS04a}). The diagrammatic characterization of the $ (1, 1) $ L-space knots and the $ (1, 1) $ almost L-space knots is given in \cite{GLV18,BZ23}. Nie \cite{Nie19} constructed infinitely many $ (1, 1) $ knots which are topologically slice, but not smoothly slice, Himino \cite{Him24} constructed infinitely many mutually non-concordant hyperbolic $ (1,1) $ knots whose Upsilon invariants are convex. Antonio, Celoria, and Stipsicz \cite{ACS21} proved that a family of the alternating torus knots are linearly independent in the concordance group and a family of twist knots are linearly independent in the concordance group.

The fundamental group of a $(1,1) $ knot have a presentation with two generators and one relator.  Our main result is the following:

\begin{thm}\label{thm:main}
Let $K$ be a $(1,1)$ knot in $ S^3 $. Given a two-generator one-relator presentation $\pi_1(S^3 \setminus K)=\langle X,Y\,|\,R(X, Y)\rangle $ of its fundamental group coming from a genus one doubly-pointed Heegaard diagram, $\widehat{HFK}(S^3, K)$ can be computed directly from the relator $R(X,Y)$.
\end{thm}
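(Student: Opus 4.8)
The plan is to reduce every ingredient of $\widehat{HFK}(S^3,K)$ to a combinatorial operation on the cyclic word $R(X,Y)$, using the combinatorial model of knot Floer homology for $(1,1)$ knots \cite{GMM05,OS04a}. Recall that from the doubly-pointed genus one diagram $(T^2,\alpha,\beta,w,z)$ the chain group $\widehat{CFK}$ is freely generated by the intersection points $\alpha\cap\beta$; the relative Maslov and Alexander gradings of two generators $x,y$ are $M(x)-M(y)=\mu(\phi)-2n_w(\phi)$ and $A(x)-A(y)=n_z(\phi)-n_w(\phi)$ for a Whitney disk $\phi\in\pi_2(x,y)$; and the differential counts index one bigons avoiding $w$, which on a torus can be enumerated combinatorially after lifting to the universal cover $\mathbb{R}^2$. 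Thus it suffices to recover, purely from $R$, (i) the cyclic sequence of intersection points read along $\beta$ together with their signs, (ii) the positions of $w$ and $z$ relative to this sequence, and (iii) enough of the planar geometry to count the empty bigons.

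The bridge between the group theory and the diagram is the observation that the given presentation is not an arbitrary one for $\pi_1$ but the one produced by van Kampen's theorem from the diagram itself. Cutting $T^2$ along $\alpha$ turns the complement of the knot strand in the $\alpha$-solid torus into a genus two handlebody, whose fundamental group is free on two generators; taking these to be $X$ (the core direction) and $Y$ (a meridian of the strand) identifies the relator $R$ with the word obtained by reading the curve $\beta$ across the cut annulus. I would make this dictionary precise: each letter of $R$ records one point of $\alpha\cap\beta$, its exponent records the sign of that intersection, and the $X$- versus $Y$-syllables record the passages of $\beta$ relative to the basepoints. Consequently the cyclic word $R$ determines the ordered, signed list of generators in (i) and the basepoint data in (ii), with the roles of $w$ and $z$ fixed up to the overall symmetry that $\widehat{HFK}$ does not detect.

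With the dictionary in hand the gradings become running sums. Fixing a base generator, the relative Alexander grading of the generator sitting at a given position in $R$ is read as the signed count of $z$-type (as opposed to $w$-type) basepoint syllables traversed to reach it, i.e.\ a partial sum of abelianized exponents of $R$; the Maslov grading is computed from the complementary partial sum together with the index formula. For the differential I would lift $\beta$ to a line in $\mathbb{R}^2$ and characterize an empty embedded bigon between two consecutive generators by a local cancellation pattern of adjacent letters in the cyclic word that encloses no lift of $w$; summing these signed contributions over all adjacent pairs yields $\widehat{\partial}$. Assembling (i)--(iii) then gives an explicit algorithm taking $R(X,Y)$ as input and returning the bigraded group $\widehat{HFK}(S^3,K)$.

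The step I expect to be the genuine obstacle is the dictionary of the second paragraph: the fundamental group is only a quotient and in general forgets the basepoints entirely, so the content is that the specific relator word carries strictly more information than the group it presents. I would have to prove that the literal $\beta$-word determines the placement of $w$ and $z$, and that any two diagrams producing the same $R$ (up to the moves corresponding to isotopy of the diagram and Tietze-type changes respecting the geometric origin of the presentation) yield identical knot Floer homology. A secondary difficulty is bookkeeping the orientations and signs so that the combinatorial bigon count in (iii) reproduces the analytically defined differential; this is routine in principle but is where sign errors would most easily enter.
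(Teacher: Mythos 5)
Your overall framing --- read the generators and their signs from the letters of $R$, lift to the universal cover, and turn the grading formulas into combinatorics on the word --- is the same as the paper's, and your items (i) and (ii) are handled there essentially as you describe (with the caveat that the word determines the diagram only up to mirroring, so the output is $\widehat{HFK}$ of $K$ or of its mirror $K^*$). But there is a genuine gap at the step you dismiss as ``running sums.'' The relator records crossings of $\beta$ with exactly two objects: $\alpha$ (the $X$-letters) and the single arc $t_\alpha$ from $w$ to $z$ (the $Y$-letters). A signed partial sum of $Y$-exponents along a subword computes the intersection of the corresponding $\beta$-arc with $t_\alpha$, which is $n_z(\phi)-n_w(\phi)$; this does give the relative Alexander grading, as you say. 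There is, however, no ``complementary partial sum'' computing $n_w(\phi)$ by itself, and the relative Maslov grading $\mu(\phi)-2n_w(\phi)$ needs exactly that. Recovering $n_z$ and $n_w$ separately from the word is the actual content of the paper's Section \ref{sec:bigon}: one restricts to \emph{primitive} bigons (those whose lift meets the relevant $\alpha$-lift only in its two corners), proves these correspond bijectively to primitive disk words (Lemma \ref{lemma:primitive_word_are_real}), and computes $P(D)=(n_z(D),n_w(D))$ by a recursion over the height-one decomposition, where the base cases and the basepoint count for the connecting square $S$ require repeated use of the embeddedness of $\beta$ and the disjointness of its translates in $\mathbb{C}$ (Lemmas \ref{lemma:point_smaller1}, \ref{lemma:one_height_one}, \ref{lemma:b_same_word}, \ref{lemma:sum_points}). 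Your proposal as written would stall here.

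Two smaller omissions. First, once $R$ is made cyclically reduced every bigon contains a basepoint, so the differential you propose to count in (iii) is identically zero (and note that the $\widehat{HFK}$ differential must avoid both $z$ and $w$, not just $w$). Second, you only produce relative gradings, whereas computing $\widehat{HFK}(S^3,K)$ as a bigraded group requires the absolute normalizations: the Alexander grading is pinned down by the symmetry \eqref{eqn:alex_grading_symmetry}, but fixing the absolute Maslov grading requires locating the generator of $\widehat{HF}(S^3)$ inside the word, which the paper does by a nontrivial two-stage deletion procedure on $R$ (Step \ref{step_absolute} of Algorithm \ref{algo:main}), itself justified by another embeddedness argument. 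The obstacle you flag --- that the group presentation a priori forgets the basepoints --- is real but is resolved by a convention (declaring which elementary bigon contains $w$, at the cost of the mirror ambiguity) rather than by a theorem; the essential difficulty is the separate determination of $n_w$ described above.
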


The computation is provided by Algorithm \ref{algo:main}. The proof of Theorem \ref{thm:main} relies on the fact that the presentation from a genus one Heegaard diagram contains enough information about the universal cover of the diagram, which was employed to compute $\widehat{HFK}(S^3,K)$ for certain knots in \cite[Section 6]{OS04a} and for (1,1) knots in \cite{GMM05}. 
Algorithm \ref{algo:main} can be generalized for $(1,1)$ knots in lens spaces (Section \ref{sec:lens}).


It would be interesting to generalize Theorem \ref{thm:main} to knots with Heegaard diagrams of higher genus, or remove that the presentation of the fundamental group arise from a genus one Heegaard diagram. Algorithm \ref{algo:main} actually applies to a slightly more general group presentations of $(1,1)$ knots (Section \ref{sec:alexander}), for which Algorithm \ref{algo:main} does compute the Alexander polynomial (Corollary \ref{cor:algorithm_alex_polynomial_pseudo_geometric}) but may not compute $\widehat{HFK}(S^3, K)$ (Example \ref{example:pseudo_but_not_homology}).

Algorithm \ref{algo:main} does not only apply to the fundamental group of $(1,1)$ knots, it also applies to any \emph{pseudo-geometric} two-generator one-relator group presentations. Though we do not know whether the homology yields an invariant of the group, and what properties it captures.

This paper is organized as follows. In Section \ref{sec:pre}, we briefly introduce the knot Floer homology and $ (1,1) $ knots. In Section \ref{sec:bigon}, we give a method to find all (primitive) bigons and the basepoints contained in them from the presentation. In Section \ref{sec:alg}, we describe the algorithm from the special presentation to the knot Floer homology, and give some examples. 
In Section \ref{sec:lens}, we extend the algorithm for $ (1,1) $ knots in lens spaces. In Section \ref{sec:alexander}, we discuss our algorithm on general group presentations.

\subsection*{Acknowledgement} We thank Cheng Chang, Xuezhi Zhao, Shengyu Zou and Yanqing Zou for helpful discussions. We thank Cheng Chang for computer programming.

The second author is partially supported by NSFC grant 12131009 and National Key R\&D Program of China 2020YFA0712800.

\clearpage
\section{Preliminaries on Heegaard Floer homology and $(1,1)$ knots}\label{sec:pre}

We recall some facts about the knot Floer homology and $(1,1)$ knots. See \cite{OS04a, Ras03, OS11} for details on knot Floer homology. 

\subsection{Knot Floer homology}

A null-homologous knot $ K $ in a closed oriented 3-manifold $ Y $ can be represented by a \emph{doubly-pointed Heegaard diagram} $ \mathcal{H} := (\Sigma, \bm{\alpha}, \bm{\beta}, w, z) $, where
\begin{itemize}
	\item $ \Sigma $ is a closed, oriented surface of genus $ g $ in $Y$ which splits $Y$ into two handlebodies, denoted by $ V_{\alpha} $ and $ V_{\beta} $;
	\item $ \bm{\alpha}: = \{ \alpha_1, \cdots, \alpha_g \} $ (resp. $ \bm{\beta}:= \{ \beta_1, \cdots, \beta_g \} $) is a set of attaching circles for handlebody $ V_{\alpha} $ (resp. $ V_{\beta} $) that are homologically independent in $H_1(\Sigma)$. Denote by $ D_{\alpha_i} $ and $ D_{\beta_i} $ their attaching disks;
	\item $ w$ and $z$ are two basepoints in $ \Sigma - \bm{\alpha} - \bm{\beta} $;
	\item the knot $ K $ is specified by a proper arc joining $ z $ to $ w $ in $ V_{\alpha} - \bigcup_i D_{\alpha_i} $, and a proper arc joining $ w $ to $ z $ in $ V_{\beta} - \bigcup_i D_{\beta_i} $.
\end{itemize}

The $g$-th symmetric product $ {\rm Sym}^g(\Sigma) := \Sigma^{\times g}/S_g $ of $\Sigma$, where $ S_g $ is the symmetric group on $ g $ elements, has a symplectic structure induced from a complex structure on $\Sigma$, so that the two $ g $-dimensional tori $ \mathbb{T}_{\alpha} = \alpha_1 \times \cdots \times \alpha_g $ and $ \mathbb{T}_{\beta} = \beta_1 \times \cdots \times \beta_g $ are Lagrangian submanifolds (by Perutz \cite{Per08}). A intersection point $ {\bf x} \in \mathbb{T}_{\alpha} \cap \mathbb{T}_{\beta} $ is a $ g $-tuples $ \{x_1, \cdots, x_g\} $ such that each $ x_i \in \alpha_i \cap \beta_{\sigma(i)} $ for some permutation $ \sigma \in S_g $. Given two intersection points $ {\bf x}, {\bf y} \in \mathbb{T}_{\alpha} \cap \mathbb{T}_{\beta} $, let $ \pi_2 ({\bf x}, {\bf y}) $ be the set of homotopy classes of \emph{Whitney disks} connecting $ {\bf x} $ and $ {\bf y} $:
$$ \{ u: \mathbb{D} \rightarrow {\rm Sym}^g (\Sigma) \,|\, u(-i) = {\bf x}, u(i) = {\bf y}, u(a) \subset \mathbb{T}_{\alpha}, u(b) \subset \mathbb{T}_{\beta} \}, $$
where $ \mathbb{D} $ is the unit disk in $\mathbb{C} $ whose boundary consists of two arcs $ a = \{ z \in \partial \mathbb{D}\,|\,{\rm Re}(z) \geq 0\} $ and $ b = \{ z \in \partial \mathbb{D}\,|\,{\rm Re}(z) \leq 0\} $. The \emph{multiplicity} $ n_w (\phi) $ of $ \phi \in \pi_2({\bf x}, {\bf y})$ at $ w \in\Sigma $ is defined to be the algebraic intersection number of $ \phi $ with $ \{w\} \times {\rm Sym}^{g-1}(\Sigma) $. The moduli space $ {\mathcal{M}}(\phi) $ of pseudo-holomorphic representatives of $ \phi $ has a natural $\mathbb{R}$ action, and let $ \widehat{\mathcal{M}}(\phi) $ be the unparametrized moduli space $ \mathcal{M}(\phi) /\mathbb{R} $. The expected dimension of $ {\mathcal{M}}(\phi) $ is determined by the \emph{Maslov index} $ \mu (\phi) $ of $\phi$.

The chain complex $ \widehat{CFK}(\mathcal{H}) $ is a free Abelian group generated by the intersection points $ {\bf x} \in \mathbb{T}_{\alpha} \cap \mathbb{T}_{\beta} $ with the differential defined by
\begin{equation}\label{eqn:knot_floer_differential}
\widehat{\partial}_K {\bf x} = \sum_{{\bf y} \in \mathbb{T}_{\alpha} \cap \mathbb{T}_{\beta}}\sum_{\{\phi \in \pi_2({\bf x}, {\bf y}) \,|\, \mu(\phi) = 1, n_w(\phi) = n_z(\phi) = 0 \}} \# \widehat{\mathcal{M}}(\phi) \cdot {\bf y}.
\end{equation}
$ (\widehat{CFK}(\mathcal{H}), \widehat{\partial}_K) $ is a chain complex whose homology $\widehat{HFK}(Y,K)$ is an invariant of the knot $ K $ in $ Y $, called the \emph{knot Floer homology} of $K$ \cite{OS04a, Ras03}.

There are two gradings on $\widehat{CFK}(S^3,K)$. The \emph{Alexander grading} is the unique function $ F: \mathbb{T}_{\alpha} \cap \mathbb{T}_{\beta} \rightarrow \mathbb{Z} $ satisfying
\begin{equation}\label{eqn:alex_grading_relative}
F({\bf x}) - F({\bf y}) = n_z(\phi) - n_w(\phi),\quad \forall\, \phi \in \pi_2({\bf x}, {\bf y})
\end{equation}
and the additional symmetry
\begin{equation}\label{eqn:alex_grading_symmetry}
\# \{ {\bf x}\,|\,F({\bf x}) = i \} \equiv \# \{ {\bf x}\,|\,F({\bf x}) = -i \} \pmod{2},\quad \forall\,i \in \mathbb{Z}.
\end{equation}
For $\bf x,\bf y\in \mathbb{T}_\alpha\cap\mathbb{T}_\beta$, the \emph{relative Maslov grading} or the \emph{homological grading} satisfies
$${\rm gr}({\bf x}, {\bf y}) = \mu(\phi) - 2n_w(\phi), \quad \forall\, \phi \in \pi_2({\bf x}, {\bf y}). $$
A Heegaard diagram of $Y$ can be obtained by removing the basepoint $z$ from a doubly-pointed Heegaard diagram of $ (Y,K)$, and $\widehat{HF}(Y)$ can be obtained from $\widehat{CFK}(Y, K)$ with additional differentials. When $Y=S^3$, 
we have $ \widehat{HF}(S^3) \cong \mathbb{Z}$, and by defining this homology to be supported in Maslov grading 0, we can define an \emph{absolute Maslov grading} $M:\mathbb{T}_\alpha\cap\mathbb{T}_\beta\to\mathbb{Z}$ with 
$${\rm gr}({\bf x}, {\bf y}) = M({\bf x})- M({\bf y}).$$

It is evident that the differential \eqref{eqn:knot_floer_differential} preserves the Alexander grading and decreases the Maslov grading by one. Let $ \widehat{CFK}_m(S^3, K; s) $ be the subgroup of $\widehat{CFK}(S^3, K)$ generated by those $ {\bf x}\in  \mathbb{T}_{\alpha} \cap \mathbb{T}_{\beta}$ with $ F({\bf x}) = s $ and $ M({\bf x}) = m $, then $ \widehat{HFK}(S^3, K) $ can be decomposed as
$$ \widehat{HFK}(S^3, K) = \bigoplus_{m, s \in \bbz} \widehat{HFK}_m(S^3, K; s). $$
The knot Floer homology is a categorification of the Alexander polynomial.

\begin{thm}{\rm (Ozsv\'ath-Szab\'o \cite{OS04a}, Rasmussen \cite{Ras03})}
	Let $ K $ be a knot in $ S^3 $ and $ \Delta_K(T) $ its symmetrized Alexander polynomial, then
	$$ \sum_{m,s \in \bbz} (-1)^m \cdot {\rm rank} \widehat{HFK}_m (S^3, K; s) \cdot T^s = \Delta_K(T). $$
\end{thm}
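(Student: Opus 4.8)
The plan is to pass to the chain level, recognize the resulting signed count as a Reidemeister--Turaev torsion of the knot complement, and then invoke Milnor's theorem. Since the Euler characteristic of a finitely generated complex depends only on its generators, and since (as noted above) $\widehat{\partial}_K$ preserves the Alexander grading $F$ and decreases the Maslov grading $M$ by one, for each fixed $s$ the alternating sum $\sum_m(-1)^m\,\mathrm{rank}\,\widehat{HFK}_m(S^3,K;s)$ equals the signed count of generators with $F=s$. Weighting by $T^s$ and summing gives
\[
\sum_{m,s}(-1)^m\,\mathrm{rank}\,\widehat{HFK}_m(S^3,K;s)\,T^s \;=\; \sum_{\mathbf{x}\in\mathbb{T}_\alpha\cap\mathbb{T}_\beta}(-1)^{M(\mathbf{x})}\,T^{F(\mathbf{x})},
\]
so it suffices to evaluate this signed, Alexander-weighted intersection count.

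Next I would give this count a topological meaning. Up to a global sign fixed by the normalization $\widehat{HF}(S^3)\cong\mathbb{Z}$ in Maslov grading $0$, the sign $(-1)^{M(\mathbf{x})}$ equals the local intersection number $\epsilon(\mathbf{x})=\pm1$ of $\mathbb{T}_\alpha$ and $\mathbb{T}_\beta$ at $\mathbf{x}$ in $\mathrm{Sym}^g(\Sigma)$. Each $\mathbf{x}=\{x_1,\dots,x_g\}$ with $x_i\in\alpha_i\cap\beta_{\sigma(i)}$ corresponds to a permutation $\sigma\in S_g$ together with a product of local signs on $\Sigma$, so $\sum_\mathbf{x}\epsilon(\mathbf{x})\,T^{F(\mathbf{x})}$ is precisely the determinant-type alternating sum over matchings that Turaev uses to compute torsion from a Heegaard diagram. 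The Alexander grading supplies the variable: by \eqref{eqn:alex_grading_relative} the difference $F(\mathbf{x})-F(\mathbf{y})=n_z(\phi)-n_w(\phi)$ records the relative $\mathrm{Spin}^c$ (equivalently, the meridional $H_1$) class separating the two basepoints, so that $T^{F(\mathbf{x})}$ is exactly the group-ring weight appearing in the twisted torsion.

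Finally, by the Milnor--Turaev theorem this torsion recovers the Alexander polynomial: Milnor identified the Reidemeister torsion of $S^3\setminus K$ over $\mathbb{Z}[t,t^{-1}]$ with $\Delta_K(t)/(t-1)$, and Turaev's combinatorial formula expresses the same invariant through the Heegaard data. In the hat theory the single basepoint $w$ would record only $\chi(\widehat{HF}(S^3))=1$; the second basepoint $z$ upgrades the count to the numerator $\Delta_K$, and the symmetry \eqref{eqn:alex_grading_symmetry}, together with the specialization $\sum_\mathbf{x}(-1)^{M(\mathbf{x})}=\chi(\widehat{HF}(S^3))=1=\Delta_K(1)$ at $T=1$, selects the symmetrized representative and pins down the overall sign and power of $T$. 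The main obstacle is the identification in the second step: one must verify that the parity of the analytically defined absolute Maslov grading agrees uniformly with the Lagrangian intersection sign $\epsilon(\mathbf{x})$, and that the analytic Alexander grading (through $n_z-n_w$) matches the algebraic $\mathrm{Spin}^c$ weighting in Turaev's formula. Reconciling this holomorphic-curve input with the purely combinatorial torsion is the crux of the argument.
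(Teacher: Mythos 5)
The paper does not prove this theorem: it is quoted as background and attributed to Ozsv\'ath--Szab\'o \cite{OS04a} and Rasmussen \cite{Ras03}, so there is no in-paper argument to compare against. Your sketch follows the standard route of those references --- reduce the Euler characteristic to the signed, $T^{F}$-weighted count of generators of $\widehat{CFK}$, identify that count with a Reidemeister--Turaev torsion of the complement computed from the Heegaard data, and invoke the Milnor--Turaev identification with $\Delta_K$, with the symmetry \eqref{eqn:alex_grading_symmetry} and the normalization at $T=1$ fixing the representative. As written it is an outline rather than a proof: the two identifications you flag at the end (that $(-1)^{M(\mathbf{x})}$ agrees with the local intersection sign $\epsilon(\mathbf{x})$, and that the $n_z-n_w$ weighting matches Turaev's group-ring weighting) are precisely where the substance of the argument in \cite{OS04a} lives, and deferring them is acceptable only because the result is being cited, not reproved, here.
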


\subsection{$(1,1)$ knots and their fundamental groups}\label{subsec:11knot}

\begin{mydef}[{\cite{Dol92}}]\label{def_gn_knot}
	A proper embedded arc $ \gamma $ in a handlebody $ H $ is called \emph{trivial} if there exists an embedded disk $ D $ in $ H $ such that $ \gamma \subseteq \partial D $ and $ \partial D \cap \partial H = \partial D \setminus {\rm Int} \gamma $.  A link $ L $ in a 3-manifold $ M $ is called a \emph{ $ (g, n) $ link} if there exists a genus $ g $ Heegaard splitting $ M = H_1 \cup H_2 $ such that $ L \cap H_i,  (i = 1, 2) $ is the union of $ n $ mutually disjoint properly embedded trivial arcs. The decomposition $ (H_1, L \cap H_1) \cup (H_2, L \cap H_2) $ is called a \emph{$ (g,n) $ decomposition} of $ (M, L) $.
\end{mydef}

We focus on $(1,1)$ knots in $S^3$. $(1,1)$ knots are precisely those knots which admit genus one doubly-pointed Heegaard diagrams, or \emph{$ (1,1) $ Heegaard diagram} for brevity. For more detailed discussions of Heegaard diagrams of $ (1,1) $ knots see \cite{GMM05, Ord06, Ord13}.



A $ (1,1) $ Heegaard diagram $ \mathcal{H} = (T^2,\alpha,\beta,w,z) $ for a $(1,1)$ knot $K$ gives a two-generator one-relator presentation of $ \pi_1(S^3 \setminus K) $ as follows. Orient $ \alpha $ and $ \beta $ so that their intersection number $ [\alpha]\cdot[\beta] = +1 $ and let $ t_{\alpha} $ be an oriented arc connecting $ w $ to $ z $ in $ T^2\setminus \alpha $. Travel along $\beta$ for a full round and record its intersection with $\alpha$ and $t_\alpha$: write $X$/$X^{-1}$ for a positive/negative intersection with $\alpha$, and $Y$/$Y^{-1}$ for a positive/negative intersection with $t_\alpha$. Let $R(X,Y)$ be the resulting word, then we have
\begin{equation}\label{eqn:fund_group_presentation}
	\pi_1(S^3 \setminus K) \cong \langle X,Y\,|\,R(X,Y) \rangle.
\end{equation}

To see that this is indeed a presentation of $ \pi_1(S^3 \setminus K) $, we stabilize the Heegaard splitting as follows. Let $V_\alpha$ ($V_\beta$ respectively) be the solid torus in the Heegaard splitting defined by $\mathcal{H}$ for which $\alpha$ ($\beta$ respectively) bounds a disk. We push the interior of $t_\alpha$ into $ V_\alpha$ and denote the new arc by $ \tilde{t}_\alpha$, so that $t_\alpha$ and $\tilde{t}_\alpha$ co-bounds a disk $D_1$. Take a tubular neighborhood $N(\tilde{t}_{\alpha})$ of $ \tilde{t}_{\alpha}$ in $ V_\alpha$ so that $N(\tilde{t}_\alpha)\cap \beta=\emptyset$ and $D_1\setminus N(\tilde{t}_{\alpha})$ is a disk, denoted by $D_2$. Let $V^\prime_\alpha=V_\alpha\setminus N(\tilde{t}_{\alpha})$ and $V^\prime_\beta$ be the closure of $V_\alpha\cup N(\tilde{t}_{\alpha})$. Then we have a genus two Heegaard splitting
\begin{equation}\label{eqn:stablization_splitting}
S^3 = V_\alpha' \bigcup_{\Sigma_2} V_\beta',
\end{equation}
with Heegaard surface $ \Sigma_2 = \partial V_{\alpha}' = \partial V_{\beta}'$. Let $\alpha_2=\partial D_2$. Let $\beta_2$ be a belt circle of $N(\tilde{t}_{\alpha})$ which is a meridian of $K$. The Heegaard diagram $ (\Sigma_2, \{ \alpha, \alpha_2\}, \{\beta, \beta_2\}) $ specifies the Heegaard splitting \eqref{eqn:stablization_splitting} and 
$$ \mathcal{H}' = (\Sigma_2, \{\alpha,\alpha_2\}, \{\beta\}) $$
is a Heegaard diagram for the knot complement $ S^3 \setminus K $, i.e., $ S^3 \setminus K $ can be obtained from a genus two surface $ \Sigma_2 $ by first attaching two-handles $ C_\alpha, C_{\alpha_2}, C_\beta $ so that the boundaries of the meridian disks of the two-handles are identified with $\alpha, \alpha_2, \beta $ respectively, and then attaching a three-ball to $ \partial (\Sigma_2 \cup C_\alpha \cup C_{\alpha_2}) = S^2 $. In this language, it is clear that the presentation \eqref{eqn:fund_group_presentation} specifies $ \pi_1(S^3 \setminus K) $: attaching $ C_\alpha $, $ C_{\alpha_2} $, and the three-ball to $ \Sigma_2 $ yields a genus two handlebody with fundamental group free on two generators. Note that $ \beta \cap \alpha_2 = \beta \cap t_{\alpha} $. According to Van Kampen's theorem, the relator obtained by attaching two-handle $ C_\beta $ is $ R(X, Y) $ described as above. 

\begin{example}
	\begin{figure}[htbp]
		\centering
		\begin{minipage}{0.4\linewidth}
			\def \globalscale {7}
			\begin{tikzpicture}[y=0.80pt, x=0.80pt, yscale=-\globalscale, xscale=\globalscale, inner sep=0pt, outer sep=0pt]
			\begin{scope}
			\path[draw=black,very thick] (26.4583,36.3802) ellipse (0.4909cm and 0.3547cm);
			
			\path[draw=black,very thick] (20.4504,36.1338) .. controls
			(20.4504,36.1338) and (21.9965,37.7932) .. (23.1051,38.3646) .. controls
			(25.6718,39.6875) and (27.5103,39.6283) .. (29.9141,38.3646) .. controls
			(31.5223,37.5191) and (32.6259,36.0380) .. (32.6259,36.0380);
			
			\path[draw=black,very thick] (21.1667,36.8048) .. controls
			(21.1667,36.8048) and (22.3669,35.6039) .. (23.3787,34.8746) .. controls
			(24.1786,34.2980) and (25.4723,33.9278) .. (26.4583,33.9278) .. controls
			(27.4444,33.9278) and (29.0116,34.4690) .. (29.8115,35.0456) .. controls
			(30.8234,35.7749) and (31.9161,36.8601) .. (31.9161,36.8601);
			
			\path[draw=black,very thick] (26.7343,39.3017) .. controls
			(27.7760,39.3202) and (28.7355,41.9548) .. (28.7540,43.7339) .. controls
			(28.7771,45.9614) and (27.8335,48.8452) .. (26.8759,48.9479);
			
			\path[draw=black,very thick, dashed] (26.7511,39.2850) .. controls (26.2752,39.2974) and
			(24.9695,41.0798) .. (24.8976,43.7716) .. controls (24.8344,46.1358) and
			(25.7247,48.8727) .. (26.7864,48.9479);
			
			\path[fill=black] (26.7,38.5) node{$\alpha$};
			
			\path[draw=green, thick, dashed] (27.2888,42.0978) .. controls (9.7258,37.9379) and
			(10.3381,31.1523) .. (23.3064,30.3461) .. controls (36.2746,29.5399) and
			(46.4467,43.5783) .. (29.6951,45.6714);
			\path[fill=black] (39.2,43) node{$ \tilde{t}_\alpha $};

			\path[draw=red,thick] (37.1224,26.6162) ..
			controls (41.6970,30.5445) and (38.9846,39.0747) .. (31.4732,41.3857) ..
			controls (29.8446,41.8867) and (27.3229,42.1047) .. (27.3229,42.1047);

			\path[draw=red,thick, dashed] (36.7916,26.4643) .. controls (36.7916,26.4643) and
			(34.4147,25.2100) .. (32.0606,26.4991) .. controls (30.3807,27.4190) and
			(28.6815,29.7609) .. (27.9787,31.1286) .. controls (27.2760,32.4964) and
			(26.8569,33.7772) .. (26.8569,33.7772);

			\path[draw=red,thick] (26.7508,33.8425) ..
			controls (26.2768,33.5415) and (22.5353,32.3683) .. (20.7495,33.2884) ..
			controls (19.2510,34.0605) and (18.3790,35.9822) .. (18.2051,37.6589) ..
			controls (17.9816,39.8124) and (18.6266,42.3919) .. (20.2509,43.8234) ..
			controls (22.6452,45.9336) and (26.7038,46.1450) .. (29.6449,45.6735);

			\path[fill=black] (41,36) node{$ t_\beta $};

			\path[draw=blue, very thick] (32.8757,38.8441) ..
			controls (29.6435,40.6467) and (25.9240,40.1546) .. (25.8224,41.7732) ..
			controls (25.7208,43.3919) and (27.0755,43.7998) .. (28.7364,43.8626) ..
			controls (30.3972,43.9254) and (31.4212,44.6690) .. (31.4034,45.9138) ..
			controls (31.3856,47.1586) and (29.5702,47.7222) .. (27.9674,47.7239) ..
			controls (7.9052,47.7447) and (5.2971,25.7744) .. (26.3253,26.5041) ..
			controls (35.9513,26.8381) and (37.8497,36.0702) .. (32.8757,38.8441) --
			cycle;
			\path[fill=black]  (16.5,43)  node{$ \beta $};

			\draw (27.2888,42.0978) circle(0.3);
			\draw[fill=black] (27.2888,43) node{$ w $};
			\draw[fill=black] (29.6951,45.6714) circle(0.3);
			\draw[fill=black] (29.6951,46.5) node{$ z $};
			\end{scope}
			\end{tikzpicture}
		\end{minipage}
		\qquad \qquad \qquad
		\begin{minipage}{0.4\linewidth}
			\begin{tikzpicture}
			\def\y{2.3}
			\draw[very thick, decoration={markings, mark=at position 1 with {\arrow[scale=1.5]{>}}}, postaction={decorate},name path=alpha] (0,0) -- (2*\y,0);
			\draw[very thick] (0,2*\y) -- (2*\y,2*\y);
			\draw[thick,dashed] (0,0) -- (0,2*\y)
			(2*\y, 0) -- (2*\y,2*\y);
			\draw (1.85*\y, 0.1*\y) node {$ \alpha $};
			
			\draw[thick, green, decoration={markings, mark=at position 0.4 with \arrow{>}}, postaction={decorate}] (1.35*\y,0.3*\y) -- (0.65*\y,1.7*\y);
			\draw (1.35*\y,0.3*\y)circle(0.07) node[below] {$ w $};
			\draw[fill=black] (0.65*\y,1.7*\y) circle(0.07) node[above] {$ z $};
			\draw[green] (\y, 0.8*\y) node[below] {$ t_{\alpha} $};
			
			\draw[very thick,blue, decoration={markings, mark=at position 0.3 with \arrow{>}}, postaction={decorate},name path=beta] (0.3*\y, 0)
			to[start angle=90,next angle=90] (1.7*\y, 2*\y)
			(1.7*\y, 0) -- (1.7*\y, 0.3*\y)
			to[start angle=90,next angle=180] (1.35*\y, 0.65*\y)
			to[next angle=270] (\y, 0.3*\y) -- (\y, 0)
			(\y, 2*\y) -- (\y, 1.7*\y)
			to[start angle=-90,next angle=-180] (0.65*\y, 1.35*\y)
			to[next angle=-270] (0.3*\y, 1.7*\y) -- (0.3*\y, 2*\y);
			\draw[blue] (1.45*\y, 1.25*\y) node {$ \beta $};
			
			\fill [name intersections={of=beta and alpha, name=i, total=\t}]
			[red, every node/.style={below right, black, opacity=1}]
			\foreach \s in {1,...,\t}{(i-\s) circle (0.07) node[below] {\small $x_{\s}$}};
			\end{tikzpicture}
		\end{minipage}
		\caption{A Heegaard diagram for the right hand trefoil $ T_{2,3}$.}
		\label{fig:RHT}
	\end{figure}
	
The right hand trefoil $ T_{2,3} $ has a Heegaard diamgram $(T^2,\alpha,\beta,w,z)$ as illustrated in Figure \ref{fig:RHT}. The $T_{2,3}$ is given by $\tilde{t}_{\alpha} \cup t_{\beta}$, where the arc $ \tilde{t}_{\alpha} $ in the solid torus is described as above, and $t_{\beta} $ is an arc in $T^2\setminus\beta$ connecting $z$ and $w$. For convenience, we represent the torus as a rectangle with opposite sides glued together, and push $ \tilde{t}_{\alpha} $ to $ t_{\alpha} $ in the torus, as shown in the figure on the right. By choosing orientations on each curve, we obtain a presentation of $ \pi_1(S^3 \setminus T_{2,3}) $ ($ X $-letters are labelled in order):
	$$ \pi_1(S^3 \setminus T_{2,3}) \cong \langle X,Y \,|\, R(X,Y) = X_1 \Ym X_2 Y \Xm_3 Y \rangle.  $$
\end{example}

Throughout the paper, we use $ \Xm $ for $ X^{-1} $ and $ \Ym $ for $ Y^{-1} $ for brevity. 

\begin{assump}\label{assump:relator}
Let $ K $ be a $ (1,1) $ knot in $ S^3 $ and $\langle X, Y\,|\,R(X, Y)\rangle $ be a presentation of $ \pi_1(S^3 \setminus K) $ obtained as above. We make the following assumptions:
\begin{enumerate}
\item\label{assump:relator_1} The curves $ \alpha $ and $ \beta $ are oriented so that $ [\alpha]\cdot[\beta] = +1 $, that is
$$ \# \{X \,|\, X \in R(X,Y)\} - \#\{\Xm \,|\, \Xm \in R(X,Y)\} = +1;$$
\item\label{assump:relator_2} $ \# \{X \,|\, X \in R(X,Y)\} \geq 2 $, or equivalently, $K$ is knotted;
\item\label{assump:relator_3} The relator $R(X,Y)$ is \emph{cyclically reduced}, that is, there are no subwords of the form $ X\Xm, \Xm X, Y\Ym$ or $\Ym Y $ up to cyclic permutations of the relator.
\end{enumerate}
\end{assump}

Assumption \ref{assump:relator} makes no actual restrictions. \eqref{assump:relator_1} is obtained by proper choice of orientations of $\alpha$ and $\beta$. \eqref{assump:relator_2} is obtained for nontrivial knots. Given a (1,1) Heegaard diagram, we can isotope the beta curve so that every bigon contains $z$ and/or $w$ (see \cite{GLV18}), and the resulting presentation will satisfy \eqref{assump:relator_3}. 

We write the relator as:
$$ R(X,Y) = R_1R_2\cdots R_i\cdots R_m, $$
where $ R_i \in \{X, \Xm, Y, \Ym \} $, and denote by $ R_i^j $ the proper subword $ R_iR_{i+1} \cdots R_j $, with the convention that $ R_{m+k} = R_k $. We are interested in the subword $ R_i^j $ with $\set{R_i, R_j}=\set{X, \Xm} $. We label the $X$-letters in $R_i^j$ form $1$ to $n$, and the $i$-th $X$-letter in $R_i^j$ is either $X_i$ or $\Xm_i$. Denote by $W_a^b$ ($1\leqslant a<b\leqslant n$) the subword of $R_i^j$ from $X_a$/$\overline{X}_a$ to $X_b$/$\overline{X}_b$ (hence $ W_1^n = R_i^j $). We use the capital $X$ and $Y$ to denote the letters in the relator $R$ and the lowercase $x$ and $y$ to denote the intersections points in the Heegaard diagram $\mathcal{H}$.

\newpage
\section{Bigons and disk words for $(1,1)$ knots}\label{sec:bigon}

Let $K$ be a $(1,1)$ knot in $S^3$ and $ \class{X, Y\,|\,R(X, Y)}$ be a presentation of $\pi_1(S^3\setminus K)$ obtained from a $ (1,1) $ Heegaard diagram $ \mathcal{H} = (T^2, \alpha, \beta, z, w) $ of $(S^3,K)$, satisfying Assumption \ref{assump:relator}. To compute $ \widehat{HFK}(S^3, K) $ from the presentation, we need to find all the information of the chain complex $ \widehat{CFK}(S^3, K) $. The generators are the intersection points of the curves $ \alpha $ and $ \beta $ in the torus $ T^2 $, which correspond to $ X $-letters ($ X $ or $ \Xm $) in the relator $ R $. On the other hand, the differential is determined by bigons in Heegaard diagram $ \mathcal{H} $. Since the relator $R$ is cyclically reduced, all bigons contain at least one basepoint, and it follows that the differential is identically zero. Therefore it suffices to decide the Alexander and Maslov grading of $ \widehat{CFK}(S^3, K) $ from the relator $ R $. In order to do so, one may wish to find all bigons from $ R $, and decide the number of basepoints contained in each bigon. However, there are difficulties to achieve that (see Example \ref{knot5_2}). Instead, we will work on a special class of bigons (Definition \ref{def:primitive_bigon}) that can be determined from $R$, and are sufficient to determine the gradings of $ \widehat{CFK}(S^3, K) $.

\begin{mydef}\label{def:primitive_bigon}
Let $ D $ be a Whitney disk in $T^2$ connecting $x_1$ and $x_n$. Let $ \widetilde{x}_1 $ and $ \widetilde{x}_n $ be lifts of $x_1$ and $x_n$ respectively  in $ \mathbb{C} $, and $ \widetilde{D} $ be a lift of $ D $ connecting $ \widetilde{x}_1 $ and $ \widetilde{x}_n $. $ \partial \widetilde{D} $ consists of an $ \alpha $ arc $ a $ and a $ \beta $ arc $ b $. Suppose $ \widetilde{\alpha} $ is the lift of $ \alpha $ containing $ \widetilde{x}_1 $. The bigon $ \widetilde{D} $ is called \emph{primitive} if $ \widetilde{\alpha} \cap b = \{ \widetilde{x}_1, \widetilde{x}_n \} $, and a bigon $ D $ is called \emph{primitive} if it has a primitive lift.
\end{mydef}

To describe our algorithm, we use the universal cover $\mathbb{C}$ of the torus $ T^2$. We also use $ \alpha $, $ \beta $, $t_{\alpha}$, $ z $ and $ w $ to denote their lifts in $ \mathbb{C} $ when there is no confusion. The keypoint is that bigons can lift to embedded bigons in the universal cover and it is possible to count the number of lifted basepoints inside primitive bigons from the relator $R$.

Primitive bigons are enough to determine the gradings of all generators. If $D$ is a bigon connecting two points $x_1$ and $x_n$ in $\mathbb{C}$, suppose $ \alpha \cap b = \{ x_1, x_2, \cdots, x_n \} $, labeled by the order of them on $b$. Then $ D $ is a combination of primitive bigons $ D_i, (i=1, \cdots, n-1) $ that connecting two adjacent $ x_i $ and $ x_{i+1} $, which can be used to compute the grading difference between $x_1$ and $x_n$.

Suppose $ D $ is a primitive bigon. Let $ n_z(D) $ (resp. $ n_w(D) $) be the number of basepoints $ z $ (resp. $ w $) contained in $ D $ and write
\begin{equation}\label{eqn:PD}
P(D) = (n_z(D), n_w(D)).
\end{equation}
In this paper we always consider the grading shift from point $ x_1 $ to $ x_n $, so the numbers $ n_z(D) $ and $ n_w(D) $ may be negative. In fact, the sign of $ n_z(D) $ and $ n_w (D) $ depends on the orientation of $ D $, as defined below:

\begin{mydef}
	Let $ D $ be a bigon in $ \mathbb{C} $ whose boundary consists of an $ \alpha $ arc $ a $ and a $ \beta $ arc $b$ ($b$ is oriented). The \emph{orientation} of $D$ is \emph{positive} if $D$ is on the right side of the arc $ b $, and \emph{negative} if otherwise. 
\end{mydef}

\begin{mydef}\label{def:disk_word}
	Let $ \varphi(W_1^n) = \# \{ X \,|\, X \in W_1^n\} - \# \{ \Xm \,|\, \Xm \in W_1^n\} $. A subword $ W_1^n $ of the relator $ R(X,Y) $ is called a \emph{disk word} if its two ends have opposite signs and $ \varphi(W_1^n) = 0 $. A disk word $ W_1^n $ is called \emph{primitive} if $ \varphi(W_1^k) \neq 0 $ for all $ 1 < k < n $.
\end{mydef}

\begin{mydef}\label{def:height}
	Let $ W_1^n $ be a disk word, let $\hat{X}_i$ denote $X_i$ or $\overline{X}_i$. Define the \emph{height} (relative to the endpoints) of $\hat{X}_i$ as
	$$ h(\hat{X}_i) = \varphi (W_1^i) - \epsilon(\hat{X}_i), \ (1 \leq i \leq n), $$
	where $ \epsilon(\hat{X}_i) = 1 $ if $ \hat{X}_i $ has the same sign as $ \hat{X}_1 $, and zero otherwise.
	If a subword $W_i^j$ of $W_1^n$ is primitive and $ h(\hat{X}_i) = h(\hat{X}_j)=s$, we say that the primitive disk word $W_i^j$ has \emph{height $s$} in $ W_1^n $. 
	A primitive disk word $ W_1^n $ is \emph{elementary} if $ n = 2 $, \emph{upward} if $\hat{X_1}=X_1$ and \emph{downward} if $\hat{X_1}=\overline{X}_1$.
\end{mydef}

For a primitive disk word $W_1^n$ with $n>2$, all $ h(\hat{X}_i), (1 < i < n) $ have the same sign. And we have $ h(\hat{X}_i) > 0 $ ($1 < i < n$) if $W_1^n$ is upward, and $ h(\hat{X}_i) < 0 $ ($1 < i < n$) if $W_1^n$ is downward.

It is clear that the corresponding word of a bigon $ D $ in $ \mathbb{C} $ is a disk word, since the intersection points of each lifting of $ \alpha $ with the $ \beta $ part of $ \partial D $ occur in pairs of opposite signs. We call a disk word $ W_1^n $ a \emph{real bigon} if it corresponds to a bigon in $ \mathbb{C} $. An example of a disk word that does not correspond to a bigon is given in the following

\begin{example}\label{knot5_2}
	Figure \ref{fig:knot5_2} shows a lifting of a Heegaard diagram compatible with the knot $ 5_2 $ in $ S^3 $. The curve $ \beta $ gives the relator
	$$ R(X, Y) = X_1 Y \Xm_2 Y X_3 \Ym \Xm_4 \Ym X_5 Y \Xm_6 Y X_7. $$
	The two disk words $ W_1^4: X_1 Y \Xm_2 Y X_3 \Ym \Xm_4 $ and $ W_3^6: X_3 \Ym \Xm_4 \Ym X_5 Y \Xm_6 $ are the same if we replace $ Y $ by $ \Ym $. $ W_1^4 $ corresponds to a real bigon, but $ W_3^6 $ does not.
	\begin{figure}[htbp]
		\centering
		\begin{tikzpicture}
			\def\x{2.5}
			\def\y{2.3}
			\draw[very thick, decoration={markings, mark=at position 1 with {\arrow[scale=1.5]{>}}}, postaction={decorate},name path = alpha] (-0.2*\x,0) -- (3.5*\x,0) node[right] {$ \alpha $};
			\draw[thick, green, decoration={markings, mark=at position 0.5 with \arrow{>}}, postaction={decorate}] (1*\x,0.5*\y) -- (0.5*\x, \y) node[above right] {$ t_{\alpha} $} -- (0*\x,1.5*\y);
			\draw[thick, green, decoration={markings, mark=at position 0.5 with \arrow{>}}, postaction={decorate}] (3*\x,0.5*\y) -- (2*\x,1.5*\y);
			\draw (1*\x,0.5*\y)circle(0.07) node[below] {$ w $};
			\draw (3*\x,0.5*\y)circle(0.07);
			\draw[fill=black] (0*\x,1.5*\y) circle(0.07) node[below] {$ z $};
			\draw[fill=black] (2*\x,1.5*\y) circle(0.07);
			\draw[thick, green] (0.5*\x,-\y) -- (0*\x,-0.5*\y);
			\draw[thick, green] (2.5*\x,-\y) -- (2*\x,-0.5*\y);
			\draw[fill=black] (0*\x,-0.5*\y) circle(0.07);
			\draw[fill=black] (2*\x,-0.5*\y) circle(0.07);
			\draw[very thick,blue, decoration={markings, mark=at position 0.05 with {\arrow[scale=1.5]{>}}}, postaction={decorate},name path = beta] (1.5*\x, -\y)
			to[start angle=130, next angle=90] (1.3*\x, -0.5*\y) node[above left] {$ \beta $}
			to[next angle=90] (1.3*\x, 0) -- (1.3*\x, 0.3*\y)
			to[next angle=180] (\x, 0.7*\y)
			to[next angle=-100] (0.7*\x, 0.3*\y)
			to[next angle=180] (0, -0.6*\y)
			to[next angle=90] (-0.1*\x, -0.5*\y)
			to[next angle=70] (0.5*\x,0.3*\y)
			to[next angle=0] (\x, 0.8*\y)
			to[next angle=-90] (1.5*\x, 0.3*\y) -- (1.5*\x, -0.3*\y)
			to[next angle=0] (2*\x, -0.85*\y) -- (2.3*\x, -0.85*\y)
			to[next angle=90] (3.1*\x, -0.2*\y) -- (3.1*\x, 0.4*\y)
			to[next angle=180] (3*\x, 0.6*\y)
			to[next angle=-90] (2.9*\x, 0.4*\y) -- (2.9*\x, -0.2*\y)
			to[next angle=180] (2.2*\x, -0.7*\y)
			to[next angle=140] (1.8*\x, -0.6*\y)
			to[next angle=90] (1.7*\x, 0) -- (1.7*\x, 0.4*\y)
			to[next angle=130] (1.5*\x, \y)
			to[next angle=90] (1.3*\x, 1.5*\y);
			
			\fill [name intersections={of=beta and alpha, name=i, total=\t}]
			[red, every node/.style={below right, black, opacity=1}]
			\foreach \s in {1,...,\t}{(i-\s) circle (0.07) node {\footnotesize $x_{\s}$}};
		\end{tikzpicture}
		\caption{A Heegaard diagram of the knot $ 5_2 $ (lifting to $ \mathbb{C} $).}
		\label{fig:knot5_2}
	\end{figure}
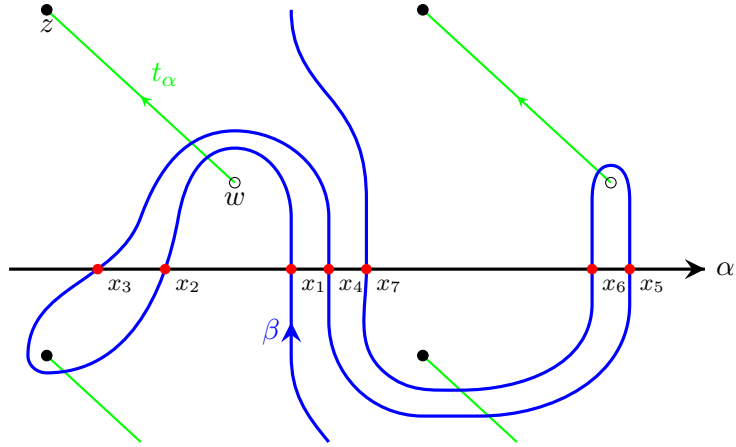
\end{example}

\begin{lemma}\label{lemma:primitive_word_are_real}
	A primitive disk word corresponds to a primitive bigon in $\mathbb{C}$ and vice versa.
\end{lemma}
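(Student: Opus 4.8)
The plan is to prove both implications simultaneously by reducing the statement to a single equivalence phrased in the universal cover $\mathbb{C}$. First I would fix coordinates so that the lifts of $\alpha$ are the horizontal lines at integer heights, with $\widetilde{\alpha}$ (the lift through $\widetilde{x}_1$) at height $0$, and orient the $y$-axis so that a positive intersection (an $X$-letter) is an upward crossing of an $\alpha$-lift and a negative one (an $\Xm$-letter) a downward crossing. Since all $\alpha$-lifts are coherently oriented parallel lines and $\beta$ lifts to disjoint embedded lines, this sign/direction dictionary is the same on every $\alpha$-lift, a point I would check explicitly. By Definition \ref{def:primitive_bigon} a bigon $\widetilde{D}$ is primitive exactly when its $\beta$-arc $b$ satisfies $\widetilde{\alpha}\cap b=\{\widetilde{x}_1,\widetilde{x}_n\}$; as the endpoints lie on $\widetilde\alpha$ and $b$ is embedded and transverse to $\widetilde\alpha$, this is equivalent to the interior of $b$ lying strictly on one side of $\widetilde\alpha$ (above if the word is upward, below if downward). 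It therefore suffices to show that this one-sidedness is equivalent to primitivity of the disk word in the sense of Definition \ref{def:disk_word}.

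The bridge between the geometry and the relator is a bookkeeping step tracking, after the $k$-th $\alpha$-crossing, which horizontal strip contains $b$. Because each $X$-letter raises and each $\Xm$-letter lowers the index of this strip by one, a short induction (with the first crossing, which starts on $\widetilde\alpha$ rather than inside a strip, handled separately) shows that after the $k$-th $\alpha$-crossing $b$ lies in the open strip between the $\alpha$-lifts at heights $\varphi(W_1^k)-1$ and $\varphi(W_1^k)$ in the upward case (and between $\varphi(W_1^k)$ and $\varphi(W_1^k)+1$ in the downward case). Hence $b$ stays strictly on the $\hat{X}_1$-side of $\widetilde\alpha$ precisely when $\varphi(W_1^k)$ keeps the sign of $\hat{X}_1$ for all $1\le k<n$. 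Since the partial sums $\varphi(W_1^k)$ start at $\pm 1$, end at $\varphi(W_1^n)=0$, and change by $\pm 1$ at each step, a discrete intermediate-value argument shows that they keep a constant nonzero sign on $1\le k<n$ if and only if $\varphi(W_1^k)\ne 0$ for every interior $k$, which is exactly the condition that $W_1^n$ be primitive. This yields both directions: a primitive bigon reads off a primitive disk word, and conversely, given a primitive disk word, the same computation shows its arc $b$ meets $\widetilde\alpha$ only at $\widetilde{x}_1,\widetilde{x}_n$, so taking $a$ to be the segment of $\widetilde\alpha$ between these points makes $a\cup b$ an embedded closed curve whose Jordan disk is the desired primitive bigon.

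The hard part — and the reason the lemma is not a tautology — is that a disk word does not by itself determine a bigon: as Example \ref{knot5_2} shows, two disk words with identical $X$-structure can differ in whether they bound a bigon, because the relator records only the heights of the $\alpha$-crossings and not their horizontal positions. When $b$ returns to $\widetilde\alpha$ in its interior, i.e.\ when some interior $\varphi(W_1^k)=0$, those horizontal positions decide whether $a\cup b$ is embedded, and this genuinely cannot be read off the word. Primitivity is exactly the hypothesis forbidding such interior returns to $\widetilde\alpha$, after which the horizontal data becomes irrelevant and the bigon always closes up. I expect the main obstacle to be making this reduction precise — in particular the strip-tracking induction and the verification that the positive-crossing-equals-upward convention is globally consistent across all $\alpha$-lifts; once these are in place, the rest is Jordan-curve bookkeeping in $\mathbb{C}$.
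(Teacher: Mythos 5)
Your argument is correct and follows essentially the same route as the paper: both work in the universal cover and rest on the observation that the partial sums $\varphi(W_1^k)$ record which strip between consecutive $\alpha$-lifts the $\beta$-arc occupies, so that interior zeros of $\varphi$ correspond exactly to interior returns of $b$ to $\widetilde{\alpha}$. The paper packages this as a first-return argument (the first $x_k\in\widetilde{\alpha}$ with $k>1$ cuts off a primitive bigon, whose word is then a disk word, forcing $k=n$) rather than your explicit strip-tracking induction, but the content is the same.
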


\begin{proof}
	Let $ W_1^n $ be a primitive disk word. Suppose it corresponds to the subarc $ b $ of $ \beta $ whose two endpoints $ x_1, x_n $ are in the lifts $ \alpha $ and $ \alpha' $, respectively. Note that two sets $ \{k\,|\,x_k \in \alpha \cap b, 1<k \} $ and $ \{k\,|\,x_k \in \alpha' \cap b, k<n \} $ cannot both be empty sets. Assuming without loss of generality that the former is not empty and that $ i $ is its minimum, then the subarc of $ b $ from $ x_1 $ to $ x_i $ and $ \alpha $ bounds a primitive bigon, so that $ W_1^i $ is a disk word. Since $ W_1^n $ is assumed to be primitive, it follows that $ i = n $ so that $ W_1^n $ actually corresponds to a primitive bigon in $\mathbb{C}$.

	Suppose $ D $ is a primitive bigon. If its corresponding word is not primitive, then $ \varphi(W_1^k) = 0 $ for some $ 1<k<n $. Let $ j = \min \{k \,|\, \varphi(W_1^n) = 0, 1<k<n \} $, then the subword $ W_1^j $ is primitive by definition. As proved above, $ W_1^j $ corresponds to a primitive bigon, so that its endpoint $ x_j \in \alpha $, which cannot occur.
\end{proof}

Since the primitive disk word and the primitive bigon are in one-to-one correspondence, we will use these two terms alternatively by abuse of notation. All primitive bigons can be enumerated since the length of the relator $ R $ is finite. What we need to do is to find the number of basepoints contained in each of them directly from $R(X,Y)$.

\subsection{Elementary bigons}

In the beginning, we consider what the local diagram that corresponds to the simplest primitive disk word $ XY\Xm $ should look like. As illustrated in Figure \ref{fig:2choices}, it could have two choices:
\begin{figure}[htbp]
	\centering
	\begin{tikzpicture}
	\def\x{1}
	\draw[very thick, blue, decoration={markings, mark=at position 0.5 with \arrow{>}}, postaction={decorate}] (2.4*\x,-0.2*\x) -- (2.4*\x,0.3*\x)
	to[start angle=90, next angle=180](2*\x,0.7*\x)
	to[next angle=270](1.6*\x,0.3*\x) --(1.6*\x,-0.2*\x);
	\draw[very thick, blue, decoration={markings, mark=at position 0.5 with \arrow{>}}, postaction={decorate}] (0.1*\x,-0.2*\x) -- (0.1*\x,1.3*\x) node[below left] {$\beta$}
	to[start angle=90, next angle=0] (0.5*\x,1.7*\x)
	to[next angle=-90] (0.9*\x,1.3*\x) -- (0.9*\x,-0.2*\x);
	\draw[very thick, decoration={markings, mark=at position 1 with \arrow{>}}, postaction={decorate}] (-0.5*\x,0) -- (3*\x,0) node[right] {$ \alpha $};
	\draw[thick, green, decoration={markings, mark=at position 0.5 with \arrow{>}}, postaction={decorate}] (2*\x,0.5*\x) -- (0.5*\x,1.5*\x);
	\draw (2*\x,0.5*\x)circle(0.07) node[below] {$w$};
	\draw[fill=black] (0.5*\x,1.5*\x) circle(0.07) node[below] {$z$};
	\draw[green] (1.5*\x, 1*\x) node[above] {$t_{\alpha}$};
	\end{tikzpicture}
	\caption{Two choices of an arc whose word is $ XY\Xm $.}
	\label{fig:2choices}
\end{figure}
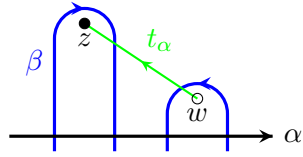

\noindent Note that the two cases cannot occur simultaneously in a diagram. If both are present, consider the word that follows the letter $\Xm $ to the next $X$-letter, it can only be $ Y\Xm $ or $ \Xm $ since the relator is reduced. And then continue the discussion with the next letter $ \Xm $, it can be seen that none of the $ \Xm $ is followed by $ X $, which cannot happen. When we choose $ XY\Xm $ to correspond to the one containing the basepoint $ z $ (resp. $ w $), the word $ \Xm YX $ must correspond to a bigon that contains the basepoint $ w $ (resp. $ z $), see Figure \ref{fig:switch_zw}:
\begin{figure}[htbp]
	\centering
	\begin{tikzpicture}
	\def\x{1}
	\begin{scope}[shift={(0,0)}]
	\draw[very thick, blue, decoration={markings, mark=at position 0.5 with \arrow{>}}, postaction={decorate}] (1.6*\x,2.2*\x) -- (1.6*\x,0.7*\x)
	to[start angle=-90, next angle=0] (2*\x,0.3*\x)
	to[next angle=90] (2.4*\x,0.7*\x) -- (2.4*\x,2.2*\x);
	\draw[very thick, blue, decoration={markings, mark=at position 0.5 with \arrow{>}}, postaction={decorate}] (0.1*\x,-0.2*\x) --(0.1*\x,1.3*\x) node[below left] {$ \beta $}
	to[start angle=90, next angle=0](0.5*\x,1.7*\x)
	to[next angle=270](0.9*\x,1.3*\x) -- (0.9*\x,-0.2*\x);
	\draw[very thick, decoration={markings, mark=at position 1 with \arrow{>}}, postaction={decorate}] (-0.5*\x,0) -- (3*\x,0) node[right] {$\alpha_0$};	\draw[very thick, decoration={markings, mark=at position 1 with \arrow{>}}, postaction={decorate}] (-0.5*\x,2*\x) -- (3*\x,2*\x) node[right] {$\alpha_1$};
	\draw[thick, green, decoration={markings, mark=at position 0.5 with \arrow{>}}, postaction={decorate}] (2*\x,0.5*\x) -- (0.5*\x,1.5*\x);
	\draw (2*\x,0.5*\x)circle(0.07) node[above] {$w$};
	\draw[fill=black] (0.5*\x,1.5*\x) circle(0.07) node[below] {$z$};
	\draw[green]  (1.4*\x, 1*\x)  node[above] {$t_{\alpha}$};
	\end{scope}

	\begin{scope}[shift={(5*\x,0)}]
	\draw[very thick, blue, decoration={markings, mark=at position 0.5 with \arrow{>}}, postaction={decorate}] (1.6*\x,2.2*\x) -- (1.6*\x,1.6*\x)
	to[start angle=-90, next angle=0](2*\x,1.2*\x)
	to[next angle=90](2.4*\x,1.6*\x) -- (2.4*\x,2.2*\x);
	\draw[very thick, blue, decoration={markings, mark=at position 0.5 with \arrow{>}}, postaction={decorate}] (0.1*\x,-0.2*\x) -- (0.1*\x,0.4*\x) node[below left] {$\beta$}
	to[start angle=90,next angle=0](0.5*\x,0.8*\x)
	to[next angle=270](0.9*\x,0.4*\x) -- (0.9*\x,-0.2*\x);
	\draw[very thick, decoration={markings, mark=at position 1 with \arrow{>}}, postaction={decorate}] (-0.5*\x,0) -- (3*\x,0) node[right] {$\alpha_0$};
	\draw[very thick, decoration={markings, mark=at position 1 with \arrow{>}}, postaction={decorate}] (-0.5*\x,2*\x) -- (3*\x,2*\x) node[right] {$\alpha_1$};
	\draw[thick, green, decoration={markings, mark=at position 0.5 with \arrow{>}}, postaction={decorate}] (2*\x,1.5*\x) -- (0.5*\x,0.5*\x);
	\draw (2*\x,1.5*\x)circle(0.07) node[above] {$z$};
	\draw[fill=black]  (0.5*\x,0.5*\x)  circle(0.07) node[below] {$w$};
	\draw[green]  (1.4*\x, 1*\x)  node[below] {$t_{\alpha}$};
	\end{scope}
	\end{tikzpicture}
	\caption{The first figure shows that $ XY\Xm $ contains the basepoint $ z $. The second figure is obtained by switching two basepoints $ z $ and $ w $.}
	\label{fig:switch_zw}
\end{figure}
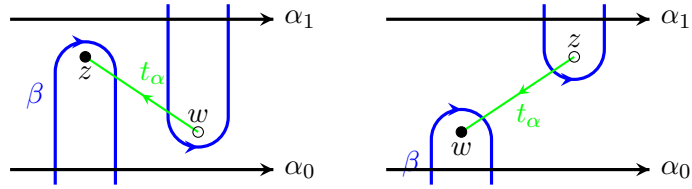

\noindent Thus, after exchanging two basepoints $z$ and $w$\protect\footnote{This procedure will reverse the orientation of the arc $ t_{\alpha} $. To obtain the same relator $ R $, we can reverse the orientation of $ T^2 $ and $ \alpha $. The resulting diagram $ (-T^2, -\alpha, \beta, w, z) $ is compatible with the mirror image $ K^* $ of the knot $ K $. Although the knot Floer homology can distinguish chirality, the fundamental group cannot. Our algorithm yields the knot Floer homology of $ K $ or $ K^* $.}
, we can assume that the disk word $ XY\Xm $ corresponds to the bigon containing the basepoint $ w $. 
So that the bigons corresponding to the four elementary disk words (or say \emph{elementary bigons}) can be drawn as in Figure \ref{fig:elementary_bigons}. Moreover, the orientation of each elementary bigon, as well as the number of basepoints it contains can be found, as shown in Table \ref{table:elementary_bigons}.

\begin{figure}[htbp]
	\centering
	\begin{tikzpicture}
	\def\x{0.8}
	\begin{scope}[shift={(0,0)}]
	\draw[very thick, blue, decoration={markings, mark=at position 0.4 with \arrow{>}}, postaction={decorate}] (2*\x,-0.3*\x) -- (2*\x,0.5*\x)
	to[start angle=90, next angle=180](1.05*\x,1.8*\x) -- (0.95*\x,1.8*\x)
	to[next angle=270](0,0.5*\x) -- (0,-0.3*\x);
	\draw[very thick, decoration={markings, mark=at position 0.5 with \arrow{>}}, postaction={decorate}] (-0.5*\x,0) -- (2.5*\x,0);

	\draw[thick, green, decoration={markings, mark=at position 0.4 with \arrow{>}}, postaction={decorate}] (\x,0.9*\x) -- (\x,2.5*\x);
	
	\draw (\x, 0.9*\x)circle(0.07) node[right] {$w$};
	\draw[fill=black] (\x, 2.5*\x)circle(0.07) node[right] {$z$};
	\draw[fill=black] (0,0)circle(0.05);
	\draw[fill=black] (2*\x, 0)circle(0.05);
	\draw (\x, -.6*\x) node[below] {$XY\Xm $};
	\draw (\x, -0.1) node[below] {$\alpha$};
	\draw[blue]   (-0.2*\x, 1*\x)node {$\beta$};
	\end{scope}

	\begin{scope}[shift={(4*\x,0)}]
	\draw[very thick, blue, decoration={markings, mark=at position 0.4 with \arrow{>}}, postaction={decorate}] (0,-0.3*\x) -- (0*\x,0.5*\x)
	to[start angle=90,next angle=0](0.95*\x,1.8*\x) -- (1.05*\x,1.8*\x)
	to[next angle=270](2*\x,0.5*\x) -- (2*\x,-0.3*\x);
	\draw[very thick, decoration={markings, mark=at position 0.5 with \arrow{>}}, postaction={decorate}] (-0.5*\x,0) -- (2.5*\x,0);

	\draw[thick, green, decoration={markings, mark=at position 0.4 with \arrow{>}}, postaction={decorate}] (\x,0.9*\x) -- (\x,2.5*\x);
	
	\draw (\x, 0.9*\x)circle(0.07) node[right] {$w$};
	\draw[fill=black] (\x, 2.5*\x)circle(0.07) node[right] {$z$};
	\draw[fill=black] (0,0)circle(0.05);
	\draw[fill=black] (2*\x, 0)circle(0.05);
	\draw (\x, -.6*\x) node[below] {$X\overline{YX}$};
	\draw (\x, -0.1) node[below] {$\alpha$};
	\draw[blue]   (-0.2*\x, 1*\x)node {$\beta$};
	\end{scope}

	\begin{scope}[shift={(8*\x,0)}]
	\draw[very thick, blue, decoration={markings, mark=at position 0.4 with \arrow{>}}, postaction={decorate}] (2*\x,2.3*\x) --  (2*\x,1.5*\x)
	to[start angle=-90,next angle=-180](1.05*\x,0.2*\x) -- (0.95*\x,0.2*\x)
	to[next angle=-270](0,1.5*\x) -- (0,2.3*\x);
	\draw[very thick, decoration={markings, mark=at position 0.5 with \arrow{>}}, postaction={decorate}] (-0.5*\x,2*\x) -- (2.5*\x,2*\x);

	\draw[thick, green, decoration={markings, mark=at position 0.4 with \arrow{>}}, postaction={decorate}] (\x,-0.5*\x) -- (\x,1.1*\x);
	
	\draw (\x,-0.5*\x)circle(0.07) node[right] {$w$};
	\draw[fill=black] (\x,1.1*\x)circle(0.07) node[right] {$z$};
	\draw[fill=black] (0,2*\x)circle(0.05);
	\draw[fill=black] (2*\x, 2*\x)circle(0.05);
	\draw[black] (\x, -.6*\x) node[below] {$\Xm YX$};
	\draw[black]    (\x, 2.1*\x) node[above] {$\alpha$};
	\draw[blue]   (-0.2*\x, 1*\x)node {$\beta$};
	\end{scope}

	\begin{scope}[shift={(12*\x,0)}]
	\draw[very thick, blue, decoration={markings, mark=at position 0.4 with \arrow{>}}, postaction={decorate}] (0,2.3*\x) -- (0,1.5*\x)
	to[start angle=-90,next angle=0] (0.95*\x,0.2*\x) -- (1.05*\x,0.2*\x)
	to[next angle=90] (2*\x,1.5*\x) -- (2*\x,2.3*\x);
	\draw[very thick, decoration={markings, mark=at position 0.5 with \arrow{>}}, postaction={decorate}] (-0.5*\x,2*\x) -- (2.5*\x,2*\x);

	\draw[thick, green, decoration={markings, mark=at position 0.4 with \arrow{>}}, postaction={decorate}] (\x,-0.5*\x) -- (\x,1.1*\x);
	
	\draw (\x,-0.5*\x)circle(0.07) node[right] {$w$};
	\draw[fill=black] (\x,1.1*\x)circle(0.07) node[right] {$z$};
	\draw[fill=black] (0,2*\x)circle(0.05);
	\draw[fill=black] (2*\x, 2*\x)circle(0.05);
	\draw[black] (\x, -.6*\x) node[below] {$\overline{XY}X$};
	\draw[black]    (\x, 2.1*\x) node[above] {$\alpha$};
	\draw[blue]   (-0.2*\x, 1*\x)node {$\beta$};
	\end{scope}
	\end{tikzpicture}
	\caption{Four elementary disk words and their corresponding bigons.}
	\label{fig:elementary_bigons}
\end{figure}
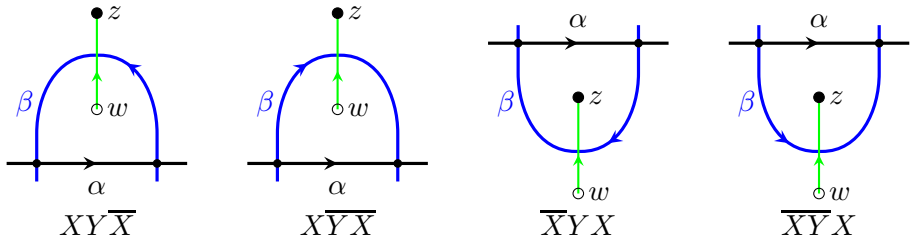

\renewcommand\arraystretch{1.5}
\begin{table}[h]
	\centering
	\begin{tabular}{|c|c|c|c|c|}
	\hline
	elementary disk word & $ XY\Xm $ & $ X\Ym \Xm $ & $ \Xm YX $ & $ \Xm \Ym X $ \\
	\hline
	orientation & negative & positive & positive & negative  \\
	\hline
	$ (n_z, n_w) $ & $ (0, -1) $ & $ (0 ,1) $ & $ (1, 0) $ & $ (-1, 0) $ \\
	\hline
	\end{tabular}
	\vskip\baselineskip
	\caption{Elementary bigons.}
	\label{table:elementary_bigons}
\end{table}

\begin{remark}
By Assumption \ref{assump:relator}, there are always two adjacent $X$-letters in $R$ with opposite signs, and there exist bigons of the form $ X Y^k \Xm $ or $\Xm Y^k X$. We claim that $ k $ must be $ \pm 1 $. If $\abs{k}>1$ on the contrary, then there exists a covering transformation $ \Gamma $ such that $ \beta \cap \Gamma(\beta) \neq \emptyset $, as illustrated in Figure \ref{fig:abs_k_is_1}.
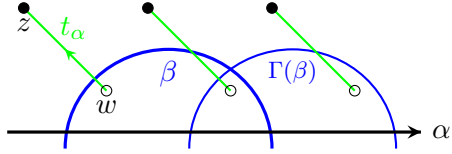
\begin{figure}[htbp]
	\centering
	\begin{tikzpicture}
		\def\x{1.1}
		\draw[very thick, blue, name path=beta] (\x, -0.2*\x)
		to[start angle=90, next angle=0] (2.25*\x,\x) node[below] {$\beta$}
		to[next angle=-90](3.5*\x,-0.2*\x);
		\draw[shift={(1.5*\x, 0)}, thick, blue, name path=beta2] (\x, -0.2*\x)
		to[start angle=90, next angle=0] (2.25*\x,\x) node[below] {\footnotesize $\Gamma(\beta)$}
		to[next angle=-90] (3.5*\x,-0.2*\x);
		\draw[very thick, decoration={markings, mark=at position 1 with \arrow{>}}, postaction={decorate}] (0.3*\x,0) -- (5.3*\x,0) node[right] {$\alpha$};
		\draw[thick, green, decoration={markings, mark=at position 0.5 with \arrow{>}}, postaction={decorate}] (1.5*\x,0.5*\x) -- (0.5*\x,1.5*\x);
		\draw[thick, green] (3*\x,0.5*\x) -- (2*\x,1.5*\x)
		(4.5*\x,0.5*\x) -- (3.5*\x,1.5*\x);
		
		\draw (1.5*\x,0.5*\x)circle(0.07) node[below] {$w$};
		\draw[fill=black] (0.5*\x,1.5*\x) circle(0.07) node[below] {$z$};
		\draw (3*\x,0.5*\x)circle(0.07);
		\draw[fill=black] (2*\x,1.5*\x) circle(0.07);
		\draw (4.5*\x,0.5*\x)circle(0.07);
		\draw[fill=black] (3.5*\x,1.5*\x) circle(0.07);
		\draw[green] (1.1*\x, \x) node[above] {$t_{\alpha}$};
	\end{tikzpicture}
	\caption{\label{fig:abs_k_is_1}
		No subword of the form $ XY^k \Xm $ with $ |k| > 1 $.}
\end{figure}
The argument is similar to Lemma \ref{lemma:point_smaller1}. It follows that there always exist elementary bigons containing basepoint $ z $ and $w$ respectively.
\end{remark}

\subsection{Bigons with height}

Now we consider general primitive bigons. Firstly, the orientation of a primitive bigon can be found from its corresponding word.

\begin{lemma}\label{lemma:orientation}
	If $ D $ is a real bigon, then the number of positive and negative elementary words that it contains differs by one. Moreover, $ D $ is positive if and only if
	$$ \#\{\text{positive elementary bigons in $ D $}\} - \#\{\text{negative elementary bigons in $ D $}\} = 1. $$
\end{lemma}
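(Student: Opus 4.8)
The plan is to read off both assertions from the total turning (rotation number) of the tangent vector along $\partial D$. Write $\partial D = a\cup b$ with $a\subset\alpha$ and $b\subset\beta$. I will treat upward bigons ($\hat X_1=X_1$), the downward case following by reversing the orientation of $b$, which simultaneously negates the orientation of $D$ and the signed count $\#\{\text{positive}\}-\#\{\text{negative}\}$ and so leaves the statement unchanged. For an upward bigon $b$ leaves $x_1$ crossing $\alpha$ upward and returns to $x_n$ crossing downward; since $a\subset\alpha_0$ (the lift containing $x_1,x_n$) meets $b$ only at the two vertices, $b$ never re-crosses $\alpha_0$ and $D$ lies on one side of it. The elementary bigons contained in $D$ are exactly the subwords $\hat X_k Y^{\pm 1}\hat X_{k+1}$ formed by two consecutive $X$-letters of opposite sign; by the preceding remark each such \emph{cap} meets $t_\alpha$ exactly once, so it is one of the four elementary bigons of Table \ref{table:elementary_bigons}. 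In terms of the signs $s_1,\dots,s_n$ of the $X$-letters, the caps are precisely the sign changes; as $s_1=+$ and $s_n=-$, their number is odd, and the caps realizing $+\!\to\!-$ (``top'') exceed those realizing $-\!\to\!+$ (``bottom'') by one.

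Next I would put $b$ into a nice position, monotone in the $\alpha_0$-height between consecutive caps, and track the tangent angle $\theta$ of $b$ from $\theta=+\tfrac{\pi}{2}$ at $x_1$ to $\theta=-\tfrac{\pi}{2}$ at $x_n$. Along each monotone piece $\theta$ stays in an open half-circle of directions, so no turning accumulates there; all the turning is concentrated at the caps, where $b$ reverses its vertical direction and hence turns by exactly $\pm\pi$. Inspecting the four local models in Figure \ref{fig:elementary_bigons} shows that a positively oriented cap turns clockwise ($-\pi$) while a negatively oriented cap turns counterclockwise ($+\pi$): e.g. $XY\Xm$ (negative) runs up--left--down and $\Xm YX$ (positive) runs down--left--up. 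Summing the cap contributions gives
$$\Theta_b \;=\; -\pi\bigl(\#\{\text{positive elementary bigons in }D\}-\#\{\text{negative elementary bigons in }D\}\bigr),$$
which is an odd multiple of $\pi$ because the number of caps is odd.

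Finally I would close the loop using the theorem of turning tangents (Hopf's Umlaufsatz) for the embedded curve $\partial D$, whose total turning is $+2\pi$ if $\partial D$ is counterclockwise and $-2\pi$ if clockwise. The $\alpha$-arc $a$ is straight and contributes $0$, and at each vertex $\alpha$ and $\beta$ meet transversally, so the interior angle of $D$ there is one of the four sectors cut out by two crossing arcs and hence lies in $(0,\pi)$; thus each vertex contributes an exterior angle of absolute value strictly less than $\pi$. If $D$ is negative then its interior lies to the left of $b$, $\partial D$ is counterclockwise, and $\Theta_b=2\pi-(\text{vertex terms})\in(0,2\pi)$, forcing $\Theta_b=\pi$ and the signed count $=-1$; if $D$ is positive the same bookkeeping gives $\Theta_b\in(-2\pi,0)$, hence $\Theta_b=-\pi$ and the signed count $=+1$. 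This yields simultaneously the equality $|\#\{\text{pos}\}-\#\{\text{neg}\}|=1$ and the stated sign criterion. The hard part is exactly this last pinning-down: the cap count alone only shows $\Theta_b$ is \emph{some} odd multiple of $\pi$, and it is the bound $|\text{vertex exterior angle}|<\pi$ coming from transversality at the bigon vertices that excludes the a priori possibilities $\Theta_b=\pm 3\pi$ (signed count $\pm 3$) and isolates $\pm\pi$.
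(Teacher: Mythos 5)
Your argument is correct and is essentially the proof the paper gives: both compute the total turning of the $\beta$-arc $b$, localize it at the elementary bigons (each contributing $\mp\pi$ according to its orientation), and compare with the $\pm 2\pi$ rotation number of the embedded boundary $\partial D$ --- the paper pins down the value $\pm\pi$ by perturbing $\beta$ to be perpendicular to $\alpha$ at the corners, whereas you use the parity of the number of caps together with the bound $|\text{exterior angle}|<\pi$, which amounts to the same bookkeeping. The only (immaterial) slip is the assertion that $b$ never re-crosses the lift $\alpha_0$: this holds for primitive bigons but fails for general real bigons, though nothing in your turning-angle computation actually depends on it.
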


\begin{proof}
	Suppose $ D $ is a real bigon whose boundary consists of an $ \alpha $ arc $ a $ and a $ \beta $ arc $ b $. Perturb $ \beta $ so that it is perpendicular to $ \alpha $ at all intersection points. Then the total curvature of the arc $ b $ is $ \pi $ (resp. $ -\pi $), counted counterclockwise, if the bigon is on the left (resp. right) when walking along $ b $. That is, $ D $ is positive if and only if the total curvature is $ -\pi $.
	
	By assuming that $\beta $ is perpendicular to $\alpha $ at all intersections, the arc $ b $ will have a non-zero contribution to the total curvature only when it passes near the basepoints. As can be seen from the picture of the elementary bigons (Figure \ref{fig:elementary_bigons}), the arc $b$ on a negative (resp. positive) elementary bigon will contribute $ \pi $ (resp. $ -\pi $). Hence the total curvature is
	$$ \pi \cdot \left(\#\{\text{negative elementary bigons}\} - \#\{\text{positive elementary bigons}\}\right). $$ 
	Thus, $ D $ is positive if and only if $ D $ contains one more positive elementary bigons than negative ones.
\end{proof}

In the rest of the section, let $ W_1^n $ be an upward positive disk word with $ n > 2 $. Let $D$ be a corresponding bigon of $W_1^n$ in $\mathbb{C}$. Suppose that $\partial D=a\cup b$ with $a\subseteq\widetilde{\alpha}$ and $b\subseteq\widetilde{\beta}$. It is clear that $h(x_2)=h(x_{n-1})=1$. Let $ b_1 $ be the subarc from $ x_1 $ to $ x_2 $, and $ b_2 $ be the subarc from $ x_{n-1} $ to $ x_n $. The corresponding subwords for $b_1$ and $b_2$ are $ W_1^2 = X Y^l X $ and $ W_{n-1}^n = \Xm \Ym^{l'} \Xm $ for some integers $ l $ and $ l' $. Write the two subwords as a pair $ (W_1^2, W_{n-1}^n) $ or $ (X Y^l X, \Xm \Ym^{l'} \Xm) $. Denote by $ S $ the square domain bounded by the subarcs $ b_1 $, $ b_2 $ and two lifts of the $ \alpha $ curve. The key to 
calculating $ P(D)$ is to determine the number of basepoints contained in $ S $.

\begin{lemma}\label{lemma:point_smaller1}
	$ \max \set{ \abs{n_z(S)}, \abs{n_{w} (S)} } \leq 1 $.
\end{lemma}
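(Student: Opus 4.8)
The plan is to work in the universal cover $\mathbb{C}$ and exploit the fact, guaranteed by $h(x_2)=h(x_{n-1})=1$, that the square $S$ is exactly \emph{one level tall}: its two horizontal sides lie on the lift $\widetilde\alpha$ through $x_1,x_n$ (height $0$) and the lift $\alpha'$ through $x_2,x_{n-1}$ (height $1$), while its remaining two sides are the $\beta$-subarcs $b_1$ (word $XY^lX$) and $b_2$ (word $\Xm\Ym^{l'}\Xm$), each of which crosses the lifts of $t_\alpha$ monotonically, with a single sign. First I would fix coordinates so that the $\alpha$-lifts are the horizontal lines $\mathbb{R}\times\bbz$, let $A$ denote the deck transformation carrying $\widetilde\alpha$ once around $\alpha$ (a horizontal translation by one period), and record that the lifts of $z$ and of $w$ lying in the strip $\mathbb{R}\times[0,1]$ each form a single $A$-orbit, one per period. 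Consequently every basepoint of a fixed type that $S$ can contain lies in one horizontal row, and two such basepoints differ by a nonzero power of $A$.

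With this normalization the counts are read off geometrically: $n_w(S)$ (resp. $n_z(S)$) is the signed number of $w$-lifts (resp. $z$-lifts) of that single row enclosed by $\partial S$, the sign recording the orientation of $S$ as in Lemma \ref{lemma:orientation}. The heart of the argument is a contradiction of the same shape as the one sketched after Table \ref{table:elementary_bigons} (Figure \ref{fig:abs_k_is_1}). Suppose $\abs{n_w(S)}\geq 2$, so that $S$ encloses two $w$-lifts differing by $A$. Because $S$ is only one level tall, enclosing both forces the footprint of the arch $D$ — the base $\widetilde\alpha\cap\partial D$ together with the horizontal extent of $b_1,b_2$ — to span at least two periods. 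Translating the whole boundary lift $\widetilde\beta$ by $A$ then produces a second arch $A(\widetilde\beta)$ whose base overlaps that of $\widetilde\beta$ with one foot lying strictly under the first arch and the other strictly outside it; exactly as in Figure \ref{fig:abs_k_is_1}, the two arches are then forced to cross. This yields a point of $\widetilde\beta\cap A(\widetilde\beta)$, contradicting the fact that $A(\widetilde\beta)$ is a lift of the embedded simple closed curve $\beta$ distinct from $\widetilde\beta$ (distinct since $[\alpha]$ is not a multiple of $[\beta]$) and hence disjoint from it. Therefore $\abs{n_w(S)}\leq 1$, and the identical argument applied to the row of $z$-lifts gives $\abs{n_z(S)}\leq 1$, so $\max\set{\abs{n_z(S)},\abs{n_w(S)}}\leq 1$.

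I expect the main obstacle to be making the forced-intersection step fully rigorous: one must pin down precisely which translate $A^{\pm 1}$ to use (the choice depends on the signs of $l$ and $l'$, that is, on the horizontal directions of $b_1$ and $b_2$), verify that the translated arch genuinely has nowhere to escape given the one-level height span, and rule out degenerate configurations in which a basepoint sits on $\partial S$ rather than strictly inside. A short case analysis on the signs of $l,l'$ and on which of $b_1,b_2$ bounds $S$ on the left should dispatch these; the remaining bookkeeping — the reduction to a single $A$-orbit and the orientation signs — is routine once the one-level-tall observation is in place.
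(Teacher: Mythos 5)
Your overall strategy---pass to the universal cover, translate by the deck transformation $A$ (horizontal translation by $[\alpha]$), and use the disjointness of distinct lifts of $\beta$ to force an impossible crossing---is the same as the paper's, and the reduction to a single $A$-orbit of basepoint lifts per strip is fine. The gap is in the one step you yourself flag as the obstacle: the claim that enclosing two $w$-lifts forces ``one foot of $A(\widetilde\beta)$ to lie strictly under the first arch.'' Concretely, you need $A^k(x_1)$ (or $A^k(x_n)$, or the corresponding points on the top edge) to land in the open base $(x_1,x_n)$ of $S$, and for that you need the \emph{base} $[x_1,x_n]$ to be longer than $k$ periods. But containing $w_1$ and $w_2=A^k(w_1)$ only constrains the width of $S$ at the height of the $w$-row; the arcs $b_1,b_2$ could a priori bulge outward there while the base stays short, in which case $A^k(x_1)\notin(x_1,x_n)$ and no foot lands under the arch. ``Because $S$ is one level tall'' does not rule this out, so the forced-crossing step is asserted rather than proved. (A second, smaller, unjustified reduction: two $w$-lifts in $S$ need only differ by some $A^k$ with $k\neq 0$, not by $A$ itself; this is harmless but should be said.)

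The paper closes exactly this gap with a small but essential device: there is an elementary bigon $D_w$ containing $w_1$ (guaranteed by the remark following Table \ref{table:elementary_bigons}), and since its boundary consists of an $\alpha$-arc and a $\beta$-arc disjoint from $b_1\cup b_2$, one checks $D_w\subseteq S$; hence its feet $p$, and likewise the feet $\Gamma(p)$ of $\Gamma(D_w)\ni w_2$, lie on the base (or top) of $S$. This yields $x_1<p<\Gamma(p)<x_n$, hence $x_1<\Gamma(x_1)<x_n$, which is precisely the ``foot strictly under the arch'' your argument needs; the crossing contradiction with $\widetilde\beta\cap\Gamma(\widetilde\beta)=\emptyset$ then goes through as you describe, and the $z$-case is handled symmetrically using $x_2,x_{n-1}$. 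An alternative repair that avoids $D_w$: since $A^k(b_1)\cup A^k(b_2)$ is disjoint from $b_1\cup b_2$ and $A^k(S)\cap S\ni w_2$, either $S\subseteq A^k(S)$---impossible, since iterating produces arbitrarily distant translates of the compact $S$ inside itself---or one of the arcs $A^k(b_i)$ lies entirely in $S$, which again places a translated foot in the open base or top and restores your intermediate-value argument. As written, though, the proposal is missing the ingredient that actually pins the translated foot inside the bigon, and this is the heart of the lemma.
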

\begin{proof}
	Suppose the square domain $ S $ contains two lifts $ w_1, w_2 $ of the basepoint $ w $. As in Figure \ref{fig:no_two_lifts_of_basepoints}, 
	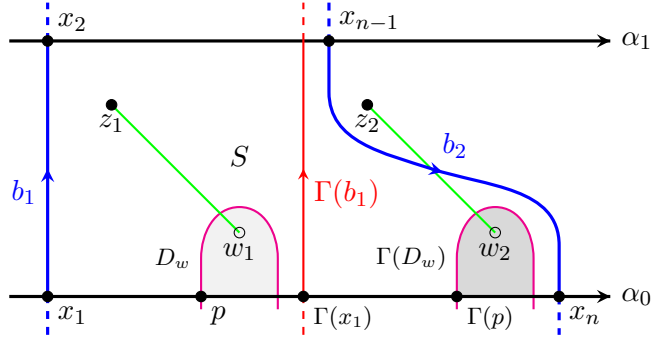
\begin{figure}[htbp]
		\centering
		\begin{tikzpicture}
		\def\x{1.7}
		\draw[thick, magenta, fill = gray!10] (1.2*\x,0*\x) -- (1.2*\x,0.3*\x) 
		to[start angle=90, next angle=0](1.5*\x,0.7*\x)
		to[next angle=-90](1.8*\x,0.3*\x) -- (1.8*\x,0*\x);
		\draw (1.2*\x,0.3*\x) node[left] {\footnotesize $ D_w $};
		\draw[thick, magenta] (1.2*\x,-0.1*\x) -- (1.2*\x,0*\x) (1.8*\x,-0.1*\x) -- (1.8*\x,0*\x);
		\draw[fill=black] (1.2*\x, 0) circle(0.07) node[below right] {$ p $};
		\draw[shift={(2*\x,0)}, thick, magenta, fill = gray!30] (1.2*\x,0*\x) -- (1.2*\x,0.3*\x)
		to[start angle=90, next angle=0](1.5*\x,0.7*\x)
		to[next angle=-90](1.8*\x,0.3*\x) --(1.8*\x,0*\x);
		\draw[thick, magenta] (3.2*\x,-0.1*\x) -- (3.2*\x,0*\x)
		(3.8*\x,-0.1*\x) -- (3.8*\x,0*\x);
		\draw (3.2*\x,0.3*\x) node[left] {\footnotesize $ \Gamma(D_w) $};
		\draw[fill=black] (3.2*\x, 0) circle(0.07) node[below right] {\footnotesize $ \Gamma(p) $};
		\draw[very thick, decoration={markings, mark=at position 1 with \arrow{>}}, postaction={decorate}] (-0.3*\x,0) -- (4.4*\x,0) node[right] {$ \alpha_0 $};
		\draw[very thick, decoration={markings, mark=at position 1 with \arrow{>}}, postaction={decorate}] (-0.3*\x,2*\x) -- (4.4*\x,2*\x) node[right] {$ \alpha_1 $};
		\draw[thick, green] (1.5*\x,0.5*\x) node[black, below]{$ w_1 $} -- (0.5*\x,1.5*\x);
		\draw[thick, green] (3.5*\x,0.5*\x) node[black, below]{$ w_2 $} -- (2.5*\x,1.5*\x);
		\draw (1.5*\x,0.5*\x) circle(0.07);
		\draw (3.5*\x,0.5*\x) circle(0.07);
		\draw[fill=black] (0.5*\x,1.5*\x) circle(0.07) node[below] {$ z_1 $};
		\draw[fill=black] (2.5*\x,1.5*\x) circle(0.07) node[below] {$ z_2 $};
		\draw[very thick, blue, dashed] (0,-0.3*\x) -- (0,0)
		(0,2*\x) -- (0,2.3*\x);
		\draw[very thick, blue, decoration={markings, mark=at position 0.5 with \arrow{>}}, postaction={decorate}] (0,0) -- (0, \x) node[below left] {$ b_1 $} -- (0,2*\x);
		\draw[fill=black] (0, 0) circle(0.07) node[below right] {$ x_1 $};
		\draw[fill=black] (0, 2*\x) circle(0.07) node[above right] {$ x_2 $};
		\draw[very thick, blue, dashed] (2.2*\x,2.3*\x) -- (2.2*\x, 2*\x)
		(4*\x,0*\x) -- (4*\x, -0.3*\x); 
		\draw[very thick, blue, decoration={markings, mark=at position 0.5 with \arrow{>}}, postaction={decorate}] (2.2*\x,2*\x) -- (2.2*\x,1.6*\x)
		to[start angle=-90, next angle=-18] (3*\x,\x) node[above right] {$ b_2 $}
		to[next angle=-90](4*\x,0.4*\x) -- (4*\x,0);
		\draw[fill=black] (4*\x, 0) circle(0.07) node[below right] {$ x_n $};
		\draw[fill=black] (2.2*\x, 2*\x) circle(0.07) node[above right] {$ x_{n-1} $};
		\draw[shift = {(2*\x,0)}, thick, red, decoration={markings, mark=at position 0.5 with \arrow{>}}, postaction={decorate}] (0,0) -- (0, \x) node[below right] {$\footnotesize \Gamma(b_1) $} -- (0,2*\x);
		\draw[shift = {(2*\x,0)}, thick, red, dashed] (0,-0.3*\x) -- (0,0)
		(0,2*\x) -- (0,2.3*\x);
		\draw[fill=black] (2*\x, 0)circle(0.07) node[below right] {\footnotesize $ \Gamma(x_1) $};
		\draw (1.5*\x, 1.1*\x) node {$ S $};
		\end{tikzpicture}
		\caption{\label{fig:no_two_lifts_of_basepoints}
		Domain $ S $ contains at least two lifts of the basepoint $ w $.}
	\end{figure}
we assume that $ w_2 $ is on the right-hand side of $ w_1 $. Note that there exists an elementary bigon containing $ w_1 $, denoted by $ D_w $. Then $ D_w \subseteq S $. Let $ p $ be an endpoint of $ D_w $. Consider the covering transformation $ \Gamma $ which maps $ w_1 $ to $ w_2 $, then $ w_2 \in \Gamma (D_w) $, and $ \Gamma (D_w) \subset S $. The orientation of the lift curve $ \widetilde{\alpha} $ gives an order
	$$ x_1 < p < \Gamma(p) < x_n. $$
	Therefore $ x_1 < \Gamma(x_1) < \Gamma(p) < x_n $. It implies that $ \Gamma (b_1)\subset S$ and $ \Gamma (D)\cap D\neq\emptyset$. It follows that $\widetilde{\beta}\cap\Gamma(\widetilde{\beta})\neq\emptyset$ and we get a contradiction.

The case that $ S $ contains two lifts $ z_1, z_2 $ of $ z $ is similar: the image of $ x_2 $ under the covering transformation $\Gamma'$ that maps $z_1$ to $z_2$ satisfies $x_2 < \Gamma'(x_2) < x_{n-1} $, so $\Gamma'(D) $ and $D$ are overlapped.
\end{proof}

As a consequence, we have $ |l - l'| \leq 1 $. Since $ D $ is upward and positive, we have $ x_1 < x_n $, i.e., the subarc $ b_2 $ is on the right side of $ b_1 $. When $ |l-l'| = 1 $, all possible pairs $ (W_1^2, W_{n-1}^n) $ are shown in Figure \ref{fig:b_diff_word},
\begin{figure}[htbp]
	\centering
	\begin{tikzpicture}
	\def\x{1.25}
	\begin{scope}[shift = {(0,0)}]
	\draw[fill=gray!10] (3.4*\x,0) -- (3.4*\x,0.3*\x)
	to[start angle=90, next angle=180] (1.8*\x,0.9*\x)
	to[next angle=90] (0.2*\x,1.5*\x) -- (0.2*\x,2*\x) -- (1.2*\x,2*\x) -- (1.2*\x,1.8*\x)
	to[start angle=-90,next angle=-5] (2.6*\x,1.2*\x)
	to[next angle=-90]	(4.2*\x,0.6*\x) -- (4.2*\x,0) -- (3.4*\x,0);
	\draw[very thick, blue, decoration={markings, mark=at position 0.5 with \arrow{>}}, postaction={decorate}] (3.4*\x,0) -- (3.4*\x,0.3*\x)
	to[start angle=90,next angle=180] (1.8*\x,0.9*\x) node[below right] {$ b_1 $}
	to[next angle=90] (0.2*\x,1.5*\x) -- (0.2*\x,2*\x);
	\draw[very thick, blue, dashed] (3.4*\x,-0.3*\x) -- (3.4*\x,0) (0.2*\x,2*\x) -- (0.2*\x,2.3*\x);
	\draw[very thick, blue, decoration={markings, mark=at position 0.5 with \arrow{>}}, postaction={decorate}] (1.2*\x,2*\x) -- (1.2*\x,1.8*\x)
	to[start angle=-90,next angle=-5] (2.6*\x,1.2*\x) node[above right] {$ b_2 $}
	to[next angle=-90] (4.2*\x,0.6*\x) -- (4.2*\x,0);
	\draw[very thick, blue, dashed] (1.2*\x,2.3*\x) -- (1.2*\x,2*\x)
	(4.2*\x,0) -- (4.2*\x,-0.3*\x);
	\draw[very thick, decoration={markings, mark=at position 1 with \arrow{>}}, postaction={decorate}] (-0.5*\x,0) -- (4.5*\x,0) node[right] {$\alpha_0 $};
	\draw[very thick, decoration={markings, mark=at position 1 with \arrow{>}}, postaction={decorate}] (-0.5*\x,2*\x) -- (4.5*\x,2*\x) node[right] {$ \alpha_1 $};
	\draw[thick, green] (1.5*\x,0.5*\x) -- (0.5*\x,1.5*\x);
	\draw[thick, green] (3*\x,0.5*\x) -- (2*\x,1.5*\x);
	\draw (1.5*\x,0.5*\x)  circle(0.07);
	\draw (3*\x,0.5*\x)  circle(0.07);
	\draw[fill=black] (0.5*\x,1.5*\x) circle(0.07) node[right] {$ z $};
	\draw[fill=black] (2*\x,1.5*\x) circle(0.07);
	\draw[fill=black] (3.4*\x,0) circle(0.07) node[below left] {$ x_1 $};
	\draw[fill=black] (4.2*\x,0) circle(0.07) node[below right] {$ x_n $};
	\draw (2*\x, -0.5*\x) node[below] {$(XY^lX, \Xm \Ym^{l-1} \Xm)$};
	\end{scope}

	\begin{scope}[shift = {(7*\x, 0)}]
	\draw[fill = gray!10] (2*\x,0*\x) -- (2*\x,0.3*\x)
	to[start angle=90,next angle=175] (0.6*\x,0.9*\x)
	to[next angle=90] (-0.7*\x,1.5*\x) -- (-0.7*\x,2*\x) -- (0*\x,2*\x) -- (0*\x,1.8*\x)
	to[start angle=-90,next angle=-5] (1.8*\x,1.2*\x)
	to[next angle=-90] (3.4*\x,0.5*\x) -- (3.4*\x,0) -- (2*\x, 0);
	\draw[very thick, blue, decoration={markings, mark=at position 0.5 with \arrow{>}}, postaction={decorate}] (2*\x,0) -- (2*\x,0.3*\x)
	to[start angle=90,next angle=175] (0.6*\x,0.9*\x) node[below left] {$ b_1 $}
	to[next angle=90] (-0.7*\x,1.5*\x) -- (-0.7*\x,2*\x);
	\draw[very thick, blue, dashed] (2*\x,-0.3*\x) -- (2*\x,0)
	(-0.7*\x,2*\x) -- (-0.7*\x,2.3*\x);
	\draw[very thick, blue, decoration={markings, mark=at position 0.5 with \arrow{>}}, postaction={decorate}] (0*\x,2*\x) -- (0*\x,1.8*\x)
	to[start angle=-90,next angle=-5] (1.8*\x,1.2*\x) node[above left] {$ b_2 $}
	to[next angle=-90] (3.4*\x,0.5*\x) -- (3.4*\x,0);
	\draw[very thick, blue, dashed] (0,2.3*\x) -- (0,2*\x)
	(3.4*\x,0) -- (3.4*\x,-0.3*\x);
	\draw[very thick, decoration={markings, mark=at position 1 with \arrow{>}}, postaction={decorate}] (-1*\x,0) -- (4*\x,0) node[right] {$ \alpha_0 $};
	\draw[very thick, decoration={markings, mark=at position 1 with \arrow{>}}, postaction={decorate}] (-1*\x,2*\x) -- (4*\x,2*\x) node[right] {$ \alpha_1 $};
	\draw[thick, green] (1.5*\x,0.5*\x) -- (0.5*\x,1.5*\x);
	\draw[thick, green] (3*\x,0.5*\x) -- (2*\x,1.5*\x);
	\draw (1.5*\x,0.5*\x)  circle(0.07);
	\draw (3*\x,0.5*\x)  circle(0.07) node[below] {$ w $};
	\draw[fill=black] (0.5*\x,1.5*\x) circle(0.07);
	\draw[fill=black] (2*\x,1.5*\x) circle(0.07);
	\draw[fill=black] (2*\x,0) circle(0.07) node[below left] {$ x_1 $};
	\draw[fill=black] (3.4*\x,0) circle(0.07) node[below right] {$ x_n $};
	\draw (2*\x, -0.5*\x) node[below] {$(XY^{l-1}X, \Xm \Ym^l \Xm)$};
	\end{scope}

	\begin{scope}[shift = {(0, -4*\x)}]
	\draw[fill = gray!10] (0.7*\x,0) -- (0.7*\x,0.3*\x)
	to[start angle=90,next angle=22] (1.6*\x,1*\x)
	to[next angle=90] (2.5*\x,1.7*\x) -- (2.5*\x,2*\x) -- (3.4*\x,2*\x) -- (3.4*\x,1.6*\x)
	to[start angle=-90,next angle=-150] (2.75*\x,0.9*\x)
	to[next angle=-90] (2.1*\x,0.2*\x) -- (2.1*\x,0) -- (0.7*\x, 0);
	\draw[very thick, blue, decoration={markings, mark=at position 0.5 with \arrow{>}}, postaction={decorate}] (0.7*\x,0) -- (0.7*\x,0.3*\x)
	to[start angle=90,next angle=22] (1.6*\x,1*\x) node[above left] {$ b_1 $}
	to[next angle=90] (2.5*\x,1.7*\x) -- (2.5*\x,2*\x);
	\draw[very thick, blue, dashed] (0.7*\x,-0.3*\x) -- (0.7*\x,0)
	(2.5*\x,2*\x) -- (2.5*\x,2.3*\x);
	\draw[very thick, blue, decoration={markings, mark=at position 0.5 with \arrow{>}}, postaction={decorate}] (3.4*\x,2*\x) -- (3.4*\x,1.6*\x) node[below right] {$ b_2 $}
	to[start angle=-90,next angle=-150] (2.75*\x,0.9*\x)
	to[next angle=-90] (2.1*\x,0.2*\x) -- (2.1*\x,0);
	\draw[very thick, blue, dashed] (3.4*\x,2.3*\x) -- (3.4*\x,2*\x)
	(2.1*\x,0) -- (2.1*\x,-0.3*\x);
	\draw[very thick, decoration={markings, mark=at position 1 with \arrow{>}}, postaction={decorate}] (-0.5*\x,0) -- (4.5*\x,0) node[right] {$ \alpha_0 $};
	\draw[very thick, decoration={markings, mark=at position 1 with \arrow{>}}, postaction={decorate}] (-0.5*\x,2*\x) -- (4.5*\x,2*\x) node[right] {$ \alpha_1 $};
	\draw[thick, green] (1.5*\x,0.5*\x) -- (0.5*\x,1.5*\x);
	\draw[thick, green] (3*\x,0.5*\x) -- (2*\x,1.5*\x);
	\draw (1.5*\x,0.5*\x)  circle(0.07) node[below] {$ w $};
	\draw (3*\x,0.5*\x)  circle(0.07);
	\draw[fill=black] (0.5*\x,1.5*\x) circle(0.07);
	\draw[fill=black] (2*\x,1.5*\x) circle(0.07);
	\draw[fill=black] (0.7*\x,0) circle(0.07) node[below left] {$ x_1 $};
	\draw[fill=black] (2.1*\x,0) circle(0.07) node[below right] {$ x_n $};
	\draw (2*\x, -0.5*\x) node[below] {$(X \Ym^l X, \Xm Y^{l-1} \Xm)$};
	\end{scope}

	\begin{scope}[shift = {(7*\x, -4*\x)}]
	\draw[fill = gray!10] (-0.2*\x,0) -- (-0.2*\x,0.4*\x)
	to[start angle=90,next angle=25] (0.6*\x,1.1*\x)
	to[next angle=90] (1.4*\x,1.8*\x) -- (1.4*\x,2*\x) -- (2.7*\x,2*\x) -- (2.7*\x,1.7*\x)
	to[start angle=-90,next angle=-160]	(1.8*\x,1*\x)
	to[next angle=-90] (0.9*\x,0.3*\x) -- (0.9*\x,0) -- (-0.2*\x, 0);
	\draw[very thick, blue, decoration={markings, mark=at position 0.5 with \arrow{>}}, postaction={decorate}] (-0.2*\x,0) -- (-0.2*\x,0.4*\x)
	to[start angle=90,next angle=25] (0.6*\x,1.1*\x) node[above left] {$ b_1 $}
	to[next angle=90] (1.4*\x,1.8*\x) -- (1.4*\x,2*\x);
	\draw[very thick, blue, dashed] (-0.2*\x,-0.3*\x) -- (-0.2*\x,0)
	(1.4*\x,2*\x) -- (1.4*\x,2.3*\x);
	\draw[very thick, blue, decoration={markings, mark=at position 0.5 with \arrow{>}}, postaction={decorate}] (2.7*\x,2*\x) -- (2.7*\x,1.7*\x)
	to[start angle=-90,next angle=-160] (1.8*\x,1*\x) node[below right] {$ b_2 $}
	to[next angle=-90] (0.9*\x,0.3*\x) -- (0.9*\x,0);
	\draw[very thick, blue, dashed] (2.7*\x,2.3*\x) -- (2.7*\x,2*\x)
	(0.9*\x,0) -- (0.9*\x,-0.3*\x);
	\draw[very thick, decoration={markings, mark=at position 1 with \arrow{>}}, postaction={decorate}] (-1*\x,0) -- (4*\x,0) node[right] {$ \alpha_0 $};
	\draw[very thick, decoration={markings, mark=at position 1 with \arrow{>}}, postaction={decorate}] (-1*\x,2*\x) -- (4*\x,2*\x) node[right] {$ \alpha_1 $};
	\draw[thick, green] (1.5*\x,0.5*\x) -- (0.5*\x,1.5*\x);
	\draw[thick, green] (3*\x,0.5*\x) -- (2*\x,1.5*\x);
	\draw (1.5*\x,0.5*\x)  circle(0.07);
	\draw (3*\x,0.5*\x)  circle(0.07);
	\draw[fill=black] (0.5*\x,1.5*\x) circle(0.07);
	\draw[fill=black] (2*\x,1.5*\x) circle(0.07) node[above] {$ z $};
	\draw[fill=black] (-0.2*\x,0) circle(0.07) node[below left] {$ x_1 $};
	\draw[fill=black] (0.9*\x,0) circle(0.07) node[below right] {$ x_n $};
	\draw (2*\x, -0.5*\x) node[below] {$(X \Ym^{l-1} X, \Xm Y^l \Xm)$};
	\end{scope}
	\end{tikzpicture}
	\caption{Four cases of the pair $ (W_1^2, W_{n-1}^n) $, and a basepoint contained in $S$.}
	\label{fig:b_diff_word}
\end{figure}
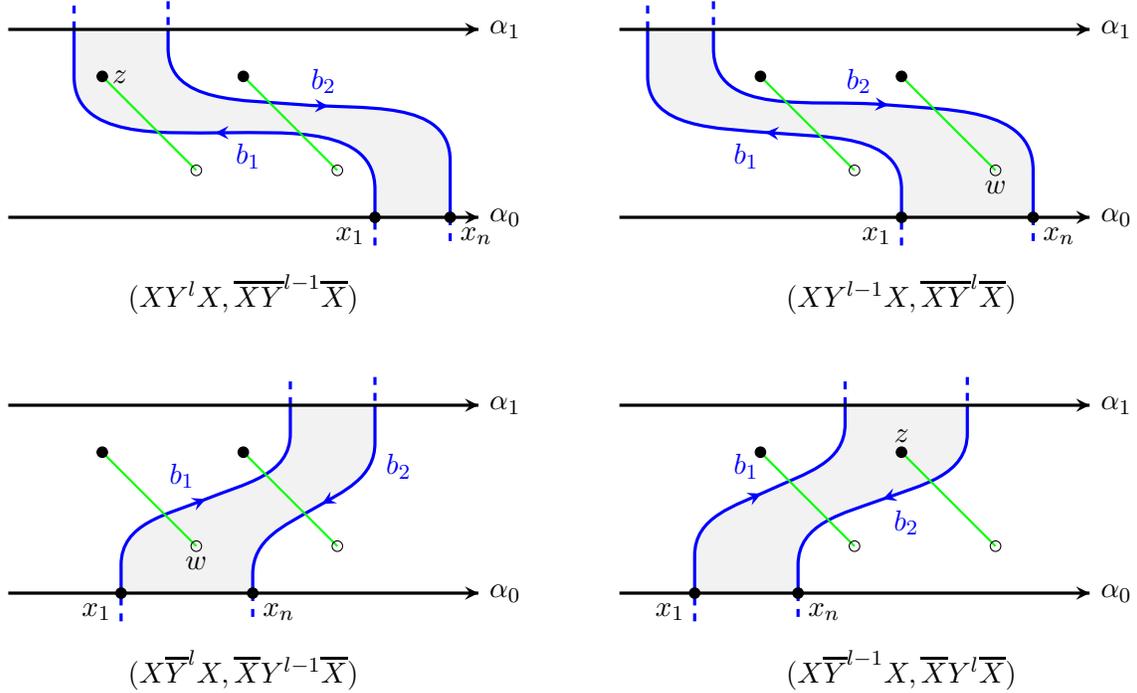
and we have the following:

\begin{cor}\label{cor:b_diff_word}
	If the two subarcs $ b_1 $ and $ b_2 $ correspond to the words $ XY^l X $ and $ \Xm \Ym^{l'} \Xm $ with $ |l - l'| = 1 $, then the pair $ (W_1^2, W_{n-1}^n) $ has four cases and the basepoint contained in $ S $ is shown in Table \ref{table:b_diff_word}. 
\begin{table}[htbp]
\centering
\setlength{\tabcolsep}{2pt} 
\renewcommand\arraystretch{1.5}
\begin{tabular}{|c|c|c|c|c|}
		\hline
		$ (W_1^2, W_{n-1}^n) $ & $ (XY^lX, \Xm \Ym^{l-1} \Xm) $ & $ (XY^{l-1}X, \Xm \Ym^l \Xm) $ & $ (X \Ym^l X, \Xm Y^{l-1} \Xm) $ & $ (X \Ym^{l-1} X, \Xm Y^l \Xm) $ \\
		\hline
		$ P(S) $ & $ (1, 0) $ & $ (0, 1) $ & $ (0, 1) $ & $ (1, 0) $ \\
		\hline
		\end{tabular}
		\vskip\baselineskip
		\caption{Four cases of the pair $ (W_1^2, W_{n-1}^n) $ when $ |l-l'|=1 $.}
		\label{table:b_diff_word}
\end{table}
\end{cor}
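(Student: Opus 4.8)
The plan is to avoid inspecting all four pictures of Figure \ref{fig:b_diff_word} separately and instead compute, once and for all, the algebraic intersection number of the oriented arc $t_\alpha$ with $\partial S$. Recall that $t_\alpha$ is oriented from $w$ to $z$ and lies in $T^2\setminus\alpha$, so each of its lifts is a compact arc from a lift of $w$ to a lift of $z$ that meets no lift of $\alpha$. Consequently $\partial S$ can meet the lifts of $t_\alpha$ only along its two $\beta$-arcs $b_1$ and $b_2$, never along its two $\alpha$-segments. First I would record the elementary topological fact that, for an oriented arc from $w$ to $z$ whose endpoints avoid a closed curve $\gamma=\partial S$, the algebraic intersection $\gamma\cdot \widetilde{t}_\alpha$ equals $\mathbf{1}[z\in S]-\mathbf{1}[w\in S]$ up to one universal sign determined by the orientations. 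Summing over all lifts turns the right-hand side into $n_z(S)-n_w(S)$, giving
\begin{equation*}
n_z(S)-n_w(S) \;=\; \pm\,\Big(\partial S \cdot \bigcup_{\widetilde{t}_\alpha}\widetilde{t}_\alpha\Big).
\end{equation*}

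Next I would compute the right-hand side directly from the relator. By construction of the word, the signed number of crossings of $b_i$ with the $t_\alpha$-lifts is exactly the net exponent of $Y$ in the corresponding subword (a letter $Y$ contributing $+1$ and $\Ym$ contributing $-1$). Writing $c_1,c_2$ for the $Y$-exponents of $W_1^2$ and $W_{n-1}^n$, the intersection number is $\pm(c_1+c_2)$, so $n_z(S)-n_w(S)=\pm(c_1+c_2)$ with the same universal sign. Since $\beta$ is cyclically reduced, the $Y$-letters inside each of $W_1^2$ and $W_{n-1}^n$ all carry a single sign; the hypothesis $|l-l'|=1$ together with Lemma \ref{lemma:point_smaller1} (which forces $|n_z(S)-n_w(S)|\le 1$) excludes the same-sign configurations, leaving exactly the four opposite-sign pairs in Table \ref{table:b_diff_word}. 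A one-line computation then gives $c_1+c_2=+1$ for $(XY^lX,\Xm\Ym^{l-1}\Xm)$ and $(X\Ym^{l-1}X,\Xm Y^l\Xm)$, and $c_1+c_2=-1$ for $(XY^{l-1}X,\Xm\Ym^l\Xm)$ and $(X\Ym^lX,\Xm Y^{l-1}\Xm)$.

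To finish, I would use that $S$ is an embedded square, so $n_z(S),n_w(S)\ge 0$, and that each is at most $1$ by Lemma \ref{lemma:point_smaller1}. Hence $n_z(S)-n_w(S)=\pm1$ forces $(n_z(S),n_w(S))=(1,0)$ when the difference is $+1$ and $(0,1)$ when it is $-1$. Fixing the universal sign by comparing the first configuration with the explicit picture in Figure \ref{fig:b_diff_word} (where $z\in S$, so the difference is $+1$ and $P(S)=(1,0)$) pins it down as $+$, and the remaining three cases then read off uniformly, reproducing the table.

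The hard part will be twofold: pinning the single global orientation sign in the intersection-number identity, and justifying that the four listed pairs are exhaustive. I would handle the sign by matching one configuration to the figure rather than tracking all orientation conventions abstractly, and I would obtain exhaustiveness from the cyclic-reducedness and the bound $|n_z(S)-n_w(S)|\le1$ above, so that no picture-by-picture enumeration is needed. The residual bookkeeping is the verification that $\partial S\cdot\widetilde{t}_\alpha=\mathbf{1}[z\in S]-\mathbf{1}[w\in S]$ holds with a consistent orientation on all lifts, which is the price paid for replacing the four-fold figure inspection with a single computation.
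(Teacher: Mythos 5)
Your argument is correct, but it proves the corollary by a genuinely different route than the paper does. The paper's justification is purely diagrammatic: it enumerates the four possible configurations of $(b_1,b_2)$ in Figure \ref{fig:b_diff_word} and reads off, picture by picture, which basepoint lies in the embedded square $S$. You instead prove the single identity $n_z(S)-n_w(S)=\pm(c_1+c_2)=\pm(l-l')$ by intersecting $\partial S$ with the lifts of $t_\alpha$: since $t_\alpha$ avoids $\alpha$, only $b_1$ and $b_2$ contribute, and the boundary cycle $x_1\to x_2\to x_{n-1}\to x_n\to x_1$ traverses both $\beta$-arcs coherently with their $\beta$-orientation, so the contributions are the net $Y$-exponents $c_1$ and $c_2$ of $W_1^2$ and $W_{n-1}^n$. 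Combined with $\max\{|n_z(S)|,|n_w(S)|\}\le 1$ from Lemma \ref{lemma:point_smaller1}, non-negativity (legitimate here because $b_1$ and $b_2$ are disjoint arcs each crossing the strip between the two $\alpha$-lifts exactly once, so $S$ is an embedded rectangle), and one figure to fix the global sign, this yields Table \ref{table:b_diff_word} in all four cases at once. What your approach buys: it is uniform, it makes explicit the relation $n_z(S)-n_w(S)=l-l'$ that the paper only uses implicitly later (in the proof of Proposition \ref{prop:Euler2Alexander}), and it reduces the figure-dependence to a single sign calibration. What the paper's approach buys is concreteness: the pictures also make visible \emph{which} basepoint is where, not just the difference. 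Two small points to tighten: the exclusion of same-sign configurations is pure arithmetic (two nonzero integer exponents of the same sign would force $|c_1+c_2|\ge 2$, contradicting $|l-l'|=1$), so Lemma \ref{lemma:point_smaller1} is needed only to know a priori that $|l-l'|\le 1$, not for exhaustiveness; and you should allow the degenerate exponents $l-1=0$ (subwords $XX$ or $\Xm\,\Xm$ with no $Y$-letter between), which your formula handles automatically but which should be acknowledged as instances of the table's rows.
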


It is evident that $ n_z(S) = n_w(S) $ if $ l = l' $. We now consider the height one primitive bigons contained in $ D $. The simplest case is that there is exactly one such bigon, namely the one corresponding to $ W_2^{n-1} $.

\begin{lemma}\label{lemma:one_height_one}
Suppose the two subarcs $ b_1 $ and $ b_2 $ correspond to the words $ XY^lX $ and $ \Xm \Ym^l \Xm $. If $ W_2^{n-1} $ is a primitive disk word, then
$$P(S)=(0,0).$$
\end{lemma}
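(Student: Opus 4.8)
The plan is to reduce the assertion $P(S)=(0,0)$ to the single geometric statement that $S$ contains no lift of a basepoint, and then to prove that by a covering-translation argument in the spirit of Lemma~\ref{lemma:point_smaller1}. First I would record the two cheap inputs. Since $l=l'$, the signed count of the intersections of $\partial S$ with the lifts of $t_{\alpha}$ vanishes: the two $\alpha$-sides contribute nothing, while $b_1$ contributes $+l$ and $b_2$ contributes $-l'$. As this signed count equals $n_z(S)-n_w(S)$, we recover the already-noted fact $n_z(S)=n_w(S)$; and by Lemma~\ref{lemma:point_smaller1} we have $\abs{n_w(S)}\le 1$. Hence $n_z(S)=n_w(S)\in\set{-1,0,1}$, and it suffices to exclude the value $\pm 1$, i.e. to show that $S$ contains no lift of $z$ or $w$.

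Next I would exploit the hypothesis that $W_2^{n-1}$ is primitive. Because it is the \emph{unique} height-one primitive bigon, the corresponding bigon $D'$ sits directly above the $\alpha_1$-segment $[x_2,x_{n-1}]$ and meets $\alpha_1$ only at its endpoints; thus $b_1$ and $b_2$ are the two legs of $\partial D$ joined across the top by $D'$, each a monotone arc running from the lift $\alpha_0$ to the lift $\alpha_1$ and crossing exactly $l=l'$ lifts of $t_{\alpha}$. Suppose, for contradiction, that $S$ contains a basepoint; by the previous paragraph it then contains exactly one lift of $w$ and one of $z$. A lift of $w$ inside $S$ is the lower endpoint of a lift of $t_{\alpha}$ crossed by $b_2$ but not by $b_1$ (and dually the lift of $z$ corresponds to a lift of $t_{\alpha}$ crossed by $b_1$ but not $b_2$), so the two families of $t_{\alpha}$-lifts met by $b_1$ and by $b_2$ are offset by some nonzero number $d$ of columns.

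The heart of the argument is to convert this offset into a self-coincidence of $\widetilde{\beta}$. Since $b_1$ and $b_2$ both run from $\alpha_0$ to $\alpha_1$ crossing the same number $l$ of $t_{\alpha}$-lifts, and $D'$ joins their upper endpoints with no intervening crossing of $\alpha_1$, the horizontal covering translation $\Gamma$ by $d$ columns (a power of the deck transformation realizing $[\alpha]$) carries $b_2$ onto $b_1$ as point sets. Then $b_1\subseteq\widetilde{\beta}\cap\Gamma(\widetilde{\beta})$, so these two lifts of the embedded curve $\beta$ are not disjoint and must coincide, forcing $\Gamma$ into the stabilizer $\class{[\beta]}$ of $\widetilde{\beta}$. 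But $\Gamma$ is a nonzero multiple of $[\alpha]$, whereas Assumption~\ref{assump:relator}\eqref{assump:relator_1} gives $[\alpha]\cdot[\beta]=+1$; writing $H_1(T^2)=\bbz[\alpha]\oplus\bbz\,\delta$ with $[\alpha]\cdot\delta=1$, this says the $\delta$-coordinate of $[\beta]$ is $1$, so $\class{[\beta]}\cap\class{[\alpha]}=\set{0}$. Hence $d=0$, contradicting $d\neq 0$. Therefore $S$ contains no basepoint and $P(S)=(0,0)$.

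I expect the main obstacle to be the third paragraph's claim that a basepoint in $S$ forces $b_1$ and $b_2$ to be \emph{exact} $\Gamma$-translates, rather than merely crossing offset column-families: one must use $l=l'$ together with the primitivity of $W_2^{n-1}$ to control the shapes of $b_1$ and $b_2$ closely enough that $\Gamma(b_2)=b_1$ on the nose, which is precisely what makes embeddedness of $\beta$ bite. This is exactly where the "unique height-one bigon" hypothesis enters, and where the argument parallels—but strengthens—the two-lift contradiction of Lemma~\ref{lemma:point_smaller1}.
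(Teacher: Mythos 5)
Your reduction to ``$S$ contains no lift of a basepoint'' and the observation $n_z(S)=n_w(S)\in\set{-1,0,1}$ are fine and agree with the paper. The gap is in your third paragraph, and it is fatal: the claim that the horizontal deck translation $\Gamma$ by $d\neq 0$ columns ``carries $b_2$ onto $b_1$ as point sets'' is never derived from the offset of the $t_\alpha$-columns, and in fact it can never hold. The arcs $b_1$ and $b_2$ are subarcs of the \emph{same} lift $\widetilde{\beta}$ (they are the two ends of the single $\beta$-arc $b\subseteq\partial D$), so for $d\neq 0$ the lift $\Gamma(\widetilde{\beta})$ is disjoint from $\widetilde{\beta}$ and hence $\Gamma(b_2)$ cannot equal, or even meet, $b_1$. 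Knowing that $b_1$ and $b_2$ each cross $l$ lifts of $t_\alpha$ and that the two column-families are offset by $d$ tells you nothing about the arcs being congruent; two embedded arcs can cross the same translated families of $t_\alpha$-lifts while having completely different shapes. So the ``contradiction'' you reach is with an assertion you introduced but could not (and cannot) establish, not with the hypothesis $d\neq 0$; the argument is circular at exactly the point you flagged as the main obstacle. (Your second paragraph is also only heuristic: that the lift of $w$ in $S$ sits on a $t_\alpha$-lift ``crossed by $b_2$ but not $b_1$'' is asserted, not proved, and the primitivity of $W_2^{n-1}$ is never actually used in a load-bearing way.)

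The paper closes this gap by a different translation argument that you may want to compare. Assuming $S$ contains a basepoint, it contains a lift of $z$. Because $[\alpha]\cdot[\beta]=+1$, the point $x_n$ has a neighbour $x'$ on $\alpha_0$ bounding a \emph{downward} primitive bigon $D'$ on the other side of $\alpha_0$; the first elementary bigon $D''=\Xm_N Y^{\pm1}X_{N+1}$ occurring after $\Xm_n$ lies in $D'$ and contains a lift of $z$. Some deck translate $\Gamma(D'')$ must then account for the copy of $z$ inside $S$, and since distinct lifts of $\beta$ are disjoint, the subarc of $\beta$ from $x_n$ to $x_{N+1}$ is carried by $\Gamma$ into $S$; in particular $\Gamma(x_n)\in\mathrm{Int}(D)$ and hence $\Gamma(D)\subseteq D$, which is impossible for a nontrivial translation. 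Note that this uses the basepoint hypothesis to produce a translate of a \emph{bigon elsewhere on $\beta$} landing inside $D$, rather than trying to match $b_1$ with $b_2$ directly; that is the idea your write-up is missing.
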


\begin{proof}
Without loss of generality, suppose that $ S $ contains a basepoint. Since $ n_z(S) = n_w(S)$, $S$ contains both $ z $ and $ w $ exactly once by Lemma \ref{lemma:point_smaller1}. As illustrated in Figure \ref{fig:1height1}, 
	\begin{figure}[htbp]
		\centering
		\begin{tikzpicture}
			\def\x{1.1}
			\draw[fill = gray!10] (2*\x,-\x) -- (2*\x,-1.3*\x)
			to[start angle=-90, next angle=-180] (1.7*\x,-1.7*\x) -- (1.6*\x,-1.7*\x)
			to[next angle=90](1.3*\x,-1.3*\x) --(1.3*\x,-\x);
			\draw[shift = {(1.5*\x, 3*\x)}, fill = gray!30] (2*\x,-\x) -- (2*\x,-1.3*\x)
			to[start angle=-90, next angle=-180] (1.7*\x,-1.7*\x) -- (1.6*\x,-1.7*\x)
			to[next angle=90](1.3*\x,-1.3*\x) --(1.3*\x,-\x);
			
			\draw[very thick, decoration={markings, mark=at position 1 with  {\arrow[scale=1.5]{>}}}, postaction={decorate}] (-1*\x,0) -- (5.5*\x,0) node[right] {$ \alpha_0 $};
			\draw[very thick, decoration={markings, mark=at position 1 with  {\arrow[scale=1.5]{>}}}, postaction={decorate}] (-1*\x,2*\x) -- (5.5*\x,2*\x) node[right] {$ \alpha_1 $};
			\draw[thick, decoration={markings, mark=at position 1 with {\arrow[scale=1.5]{>}}}, postaction={decorate},dashed] (-1*\x,-1*\x) -- (5.5*\x,-1*\x) node[right] {$ \alpha' $};
			\draw[ thick, decoration={markings, mark=at position 1 with {\arrow[scale=1.5]{>}}}, postaction={decorate},dashed] (-1*\x,3*\x) -- (5.5*\x,3*\x) node[right] {$ \Gamma(\alpha') $};
			\draw[thick, green] (1*\x,0.5*\x) -- (0*\x,1.5*\x);
			\draw[thick, green] (2.5*\x,0.5*\x) -- (1.5*\x,1.5*\x);
			\draw[thick, green] (4*\x,0.5*\x) -- (3*\x,1.5*\x);
			\draw (1*\x,0.5*\x)  circle(0.07);
			\draw (2.5*\x,0.5*\x)  circle(0.07);
			\draw (4*\x,0.5*\x)  circle(0.07);
			\draw[fill=black] (0*\x,1.5*\x) circle(0.07);
			\draw[fill=black] (1.5*\x,1.5*\x) circle(0.07);
			\draw[fill=black] (3*\x,1.5*\x) circle(0.07);
			\draw[very thick, blue, decoration={markings, mark=at position 0.5 with {\arrow[scale=1.5]{>}}}, postaction={decorate}] (0*\x,-0.2*\x) -- (0*\x,0.4*\x)
			to[start angle=90, next angle=20](1*\x,1.1*\x) node[above left] {$ b_1 $}
			to[next angle=90] (2*\x,1.8*\x) -- (2*\x,2*\x);
			\draw[very thick, blue, decoration={markings, mark=at position 0.5 with {\arrow[scale=1.5]{>}}}, postaction={decorate}, dashed] (2*\x,2*\x) -- (2*\x,3*\x)
			to[start angle=90, next angle=0] (2.9*\x,5*\x) -- (3.1*\x,5*\x)
			to[next angle=-90](4*\x,3*\x) node[above right] {$ \beta $} -- (4*\x, 2*\x);
			\draw[very thick, blue, decoration={markings, mark=at position 0.5 with {\arrow[scale=1.5]{>}}}, postaction={decorate}] (4*\x,2*\x) -- (4*\x,1.6*\x)
			to[start angle=-90, next angle=-160] (3*\x,0.9*\x) node[below right] {$ b_2 $}
			to[next angle=-90](2*\x,0.2*\x) -- (2*\x,0);
			\draw[thick, blue, decoration={markings, mark=at position 0.3 with \arrow{>}}, postaction={decorate}] (2*\x,0) -- (2*\x,-1.3*\x)
			to[start angle=-90, next angle=180](1.7*\x,-1.7*\x) -- (1.6*\x,-1.7*\x)
			to[next angle=90](1.3*\x,-1.3*\x) -- (1.3*\x,-0.9*\x);
			\draw[thick, blue, dashed] (1.3*\x,-0.9*\x) -- (1.3*\x,-0.8*\x)
			to [start angle=-90, next angle=180](1.2*\x,-0.5*\x)
			to[next angle=270](1.1*\x,-0.8*\x) -- (1.1*\x,-1.3*\x) -- (1.1*\x,-1.4*\x)
			to[next angle=0] (1.8*\x, -2.3*\x) -- (2.4*\x, -2.3*\x)
			to[next angle=90] (3.1*\x,-1.3*\x) -- (3.1*\x,0.3*\x);
			
			\draw[shift={(1.5*\x,3*\x)},thick, red] (2*\x,0.2*\x) -- (2*\x,-1.3*\x) 
			to[start angle=-90, next angle=180](1.7*\x,-1.7*\x) -- (1.6*\x,-1.7*\x)
			to[next angle=90] (1.3*\x,-1.3*\x) --(1.3*\x,-0.9*\x);
			
			\draw[shift = {(0, -3*\x)}, fill=black] (1.5*\x,1.5*\x) circle(0.07);
			\draw[shift = {(0, -3*\x)}, thick, green] (2*\x,\x) -- (1.5*\x,1.5*\x);
			
			\draw[fill=black] (0, 0) circle(0.07) node[below right] {$ x_1 $};
			\draw[fill=black] (2*\x, 0) circle(0.07) node[below right] {$ x_n $};
			\draw[fill=black] (2*\x, -\x) circle(0.07) node[below right] {\tiny $ x_N $};
			\draw[fill=black] (3.1*\x, 0) circle(0.07) node[below right] {$ x' $};
			\draw[red, fill=red] (3.5*\x, 3*\x) circle(0.07) node[above left] {\tiny $ \Gamma(x_n) $};
			
			\draw (3*\x, 4*\x) node {$ D_1 $};
			\draw (2.5*\x, -1.6*\x)  node {$ D' $};
			\draw (1.7*\x, -1.2*\x)  node {\tiny $ D'' $};
			\draw[red] (3.1*\x, 2.25*\x)  node {\tiny $ \Gamma(D'') $};
			\draw[black] (1.6*\x,\x)  node {$ S $};
		\end{tikzpicture}
		\caption{The case where the subword $ W_2^{n-1} $ is primitive. The red arc is a transformation of arc $ s $.}
		\label{fig:1height1}
	\end{figure}
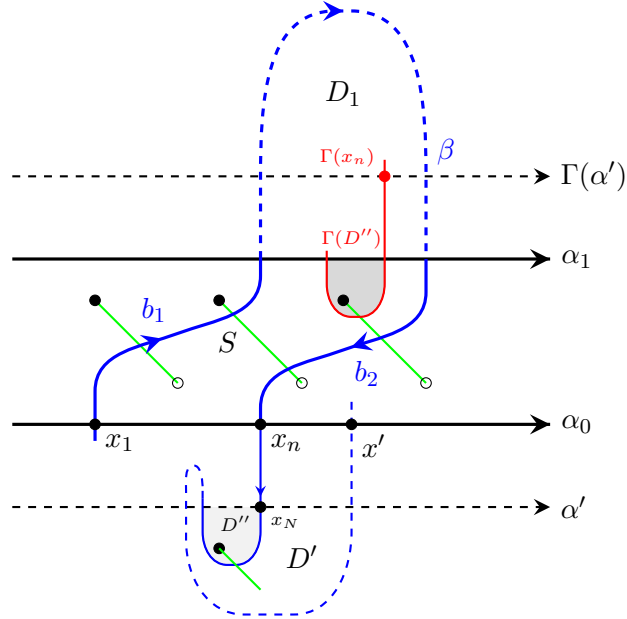
since the algebraic intersection number of $ \alpha_0 $ and $ \beta $ is one, there will be other intersection points. Without loss of generality, suppose $ x' \neq x_1 $ is adjacent to $ x_n $, then the arcs bounded by $ x_n $ and $ x' $ on $ \alpha_0 $ and $ \beta $ bound a primitive bigon $ D' $, and which is on the other side of $ \alpha_0 $ with $ D $, i.e., $ D' $ is downward. Therefore, it must contain a subword of the form $ \Xm Y^{\pm 1} X $, whose corresponding bigon contains the basepoint $ z $. In fact, the first elementary bigon after the letter $ \Xm_n $ is of this form, assumed to be $ \Xm_N Y^{\pm 1} X_{N+1}, (N \geq n) $, and denoted by $ D''$. By the assumption that $ S $ contains the basepoint $ z $, there is a covering transformation $ \Gamma $ such that $ \Gamma (D'') $ is contained in $ D $. Furthermore, let $ s $ be the subarc of $ \beta $ that from the point $ x_n $ to $ x_{N+1} $, then $ \Gamma(s) \subseteq S $. In particular, the covering transformation $ \Gamma $ maps $ x_n $ to a point in the interior of $ D $. Since $ \beta \cap \Gamma(\beta) = \emptyset $, this implies that $ \Gamma (D) $ is contained in $ D $. We get a contradiction since $ \Gamma $ is nothing but a translation.
\end{proof}

\begin{cor}
	$ P(X^k Y \Xm^k) = P(X Y \Xm) $. This equation holds for the other three types as well. We also call all of them are elementary bigons.
\end{cor}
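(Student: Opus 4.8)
The plan is to induct on $k$, peeling one outermost layer $X\cdots\Xm$ off the word at each step and showing that it contributes nothing to $P$. By the symmetries of the diagram (reversing the orientation of $\alpha$ or of $\beta$) it suffices to treat one of the four types, and I would take the upward positive representative $W_1^{2k}=X^k\Ym\Xm^k$, for which the standing hypotheses of Lemma~\ref{lemma:one_height_one} hold. First I would check that $W_1^{2k}$ is a primitive disk word: $\varphi(W_1^{2k})=k-k=0$, its ends $X_1,\Xm_{2k}$ have opposite signs, and for $1<j<2k$ the numbers $\varphi(W_1^j)$ run through $2,3,\dots,k,k-1,\dots,1$, all nonzero, so by Lemma~\ref{lemma:primitive_word_are_real} it is a primitive bigon $D_k$ in $\mathbb{C}$. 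Its only elementary bigon is $X_k\Ym\Xm_{k+1}=X\Ym\Xm$, which is positive, so $D_k$ is positive by Lemma~\ref{lemma:orientation} and upward since $\hat{X}_1=X_1$. The height profile (Definition~\ref{def:height}) is $0,1,\dots,k-1,k-1,\dots,1,0$, so the arc $b$ climbs from $x_1\in\alpha_0$ across $\alpha_1,\dots,\alpha_{k-1}$, turns once near the single $\Ym$, and descends back to $x_{2k}\in\alpha_0$.

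Next I would set up the recursion. The outer subwords $W_1^2=XX$ and $W_{2k-1}^{2k}=\Xm\,\Xm$ have the form $XY^lX$ and $\Xm\Ym^{l'}\Xm$ with $l=l'=0$, while the interior word $W_2^{2k-1}=X^{k-1}\Ym\Xm^{k-1}$ is again a primitive disk word (same $\varphi$-count). The subarcs $b_1$ (from $x_1$ to $x_2$) and $b_2$ (from $x_{2k-1}$ to $x_{2k}$), together with the lifts $\alpha_0$ and $\alpha_1$, cut off the square $S$ in the strip between $\alpha_0$ and $\alpha_1$; the complementary part of $D_k$ is precisely the primitive bigon $D_{k-1}$ of $W_2^{2k-1}$, sitting above $\alpha_1$ with endpoints $x_2,x_{2k-1}\in\alpha_1$. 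Since the underlying region of $D_k$ is the union of $S$ and that of $D_{k-1}$, meeting only along $\alpha_1$, the basepoint multiplicities add, $P(D_k)=P(S)+P(D_{k-1})$. As $l=l'$ and $W_2^{2k-1}$ is primitive, Lemma~\ref{lemma:one_height_one} gives $P(S)=(0,0)$, so $P(D_k)=P(D_{k-1})$. Iterating down to the base case $k=1$ yields $P(X^k\Ym\Xm^k)=P(X\Ym\Xm)$.

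Finally, the remaining three types follow by symmetry. Reversing the orientation of $\beta$ reverses the word and inverts every letter, and reversing the orientation of $\alpha$ interchanges $X\leftrightarrow\Xm$ while fixing $Y$ and $\Ym$; together these operations permute $\{XY\Xm,\;X\Ym\Xm,\;\Xm YX,\;\Xm\Ym X\}$ transitively, and they carry a thickened word to a thickened word and its elementary core to the corresponding elementary word. Because $P$ is extracted from the word by the same geometric recipe in each diagram, it transforms in the same way on a thickened word as on its core; hence the identity $P(\text{thickened})=P(\text{elementary})$, once established for $X^k\Ym\Xm^k$, propagates to all four types, and in particular gives the stated $P(X^kY\Xm^k)=P(XY\Xm)$. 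I expect the main obstacle to be the orientation bookkeeping of the middle step: confirming that peeling the outer layer really yields the square $S$ with $l=l'=0$ (so that Lemma~\ref{lemma:one_height_one} applies) and that $D_k$, $S$, and $D_{k-1}$ carry compatible orientations, so that the signed multiplicities add correctly. The rest is the routine $\varphi$- and height-bookkeeping of Definitions~\ref{def:disk_word} and~\ref{def:height}.
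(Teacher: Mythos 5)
Your proof is correct and follows essentially the argument the paper intends: the corollary is stated without proof as an immediate consequence of Lemma \ref{lemma:one_height_one}, and your peeling recursion (checking $W_1^{2k}$ is primitive, identifying the square $S$ with $l=l'=0$ so that $P(S)=(0,0)$, and adding $P(S)$ to $P$ of the interior bigon) is exactly that reasoning, with the additivity step being the easy case of Lemma \ref{lemma:sum_points}. The reduction of the other three types to the upward positive one by the symmetries $Y\mapsto\Ym$ and $X\mapsto\Xm$ likewise matches the paper's treatment in Step \ref{step_basepoint} of Algorithm \ref{algo:main}.
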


\begin{cor}\label{cor:no_whole_arc} 
	A lift of a real bigon does not contain a whole lift of $ t_\alpha $.
\end{cor}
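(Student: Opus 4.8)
The plan is to argue by contradiction and to reduce everything to the incompatibility of an embedded $\beta$ with its own covering translates, which is exactly the mechanism already used in Lemma~\ref{lemma:point_smaller1} and in the Remark following Table~\ref{table:elementary_bigons} (Figure~\ref{fig:abs_k_is_1}). So suppose some lift $\widetilde{D}$ of a real bigon contains a whole lift $\widetilde{t}_\alpha$ of $t_\alpha$, and write $\widetilde{w},\widetilde{z}$ for its endpoints, a lift of $w$ and a lift of $z$ lying in $\mathrm{int}(\widetilde{D})$. First I would reduce to the case of a primitive bigon: a real bigon is a union of primitive bigons glued along arcs lying in lifts of $\alpha$, and since $t_\alpha\subseteq T^2\setminus\alpha$ lifts to an arc disjoint from every lift of $\alpha$, the connected arc $\widetilde{t}_\alpha$ cannot meet these gluing arcs and therefore lies in the interior of a single primitive piece. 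Thus it suffices to derive a contradiction when $\widetilde{D}$ is primitive, in which case $\widetilde{D}$ lies in one of the two closed half-planes cut out by the $\alpha$-line $\widetilde{\alpha}$ carrying its $\alpha$-arc $a$, and $\partial\widetilde{D}=a\cup b$ with $b\subseteq\widetilde{\beta}$.

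Next I would exploit the two endpoints. Because $t_\alpha$ is disjoint from $\alpha$, the arc $\widetilde{t}_\alpha$ sits in a single strip between two consecutive lifts $\widetilde{\alpha}_0,\widetilde{\alpha}_1$ of $\alpha$, with $\widetilde{w}$ abutting $\widetilde{\alpha}_0$ and $\widetilde{z}$ abutting $\widetilde{\alpha}_1$ --- crucially, two \emph{different} lifts of $\alpha$. By the Remark, $\beta$ meets $t_\alpha$, so travelling along $\widetilde{t}_\alpha$ from $\widetilde{w}$ the first lift of $\beta$ encountered is the $\beta$-cap of the elementary bigon $D_w$ containing $\widetilde{w}$ (of type $XY\Xm$ or $X\Ym\Xm$), and symmetrically from $\widetilde{z}$ one meets the cap of the elementary bigon $D_z$ (of type $\Xm YX$ or $\Xm\Ym X$). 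Each such cap lies on some lift of $\beta$. If that lift is $\widetilde{\beta}$ and the cap is part of $b$, then $\widetilde{t}_\alpha$ crosses $\partial\widetilde{D}$, contradicting $\widetilde{t}_\alpha\subseteq\mathrm{int}(\widetilde{D})$. Otherwise the cap lies on a genuine covering translate $\Gamma(\widetilde{\beta})$, which then enters $\mathrm{int}(\widetilde{D})$; as $\Gamma(\widetilde{\beta})$ is a properly embedded line and $\widetilde{D}$ is a disk, it must leave $\widetilde{D}$ across $\partial\widetilde{D}=a\cup b$. I would then show it cannot escape only through the single $\alpha$-arc $a$, so it is forced across $b$, giving $\Gamma(\widetilde{\beta})\cap\widetilde{\beta}\neq\emptyset$ for the distinct lifts $\widetilde{\beta}\neq\Gamma(\widetilde{\beta})$ of the embedded curve $\beta$ --- the desired contradiction.

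The hard part will be the last step: ruling out that the offending translate $\Gamma(\widetilde{\beta})$ escapes $\widetilde{D}$ through the $\alpha$-arc $a$ rather than through $b$. This is where I expect to need that $\widetilde{w}$ and $\widetilde{z}$ abut two \emph{distinct} bounding lifts $\widetilde{\alpha}_0\neq\widetilde{\alpha}_1$, whereas $\widetilde{D}$ carries its $\alpha$-boundary on the single line $\widetilde{\alpha}$: the two caps $D_w\subseteq\widetilde{D}$ and $D_z\subseteq\widetilde{D}$ (whose containment I would justify as in the proof of Lemma~\ref{lemma:point_smaller1}) rest on the two different lines $\widetilde{\alpha}_0$ and $\widetilde{\alpha}_1$, so at least one of them does not lie on the $\alpha$-boundary line $\widetilde{\alpha}$, and tracking $\Gamma(\widetilde{\beta})$ near that cap should pin its exit on $b$. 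I would also need to confirm the orientation and height bookkeeping, so that the ``first $\beta$-crossing from $\widetilde{w}$'' and ``from $\widetilde{z}$'' are well defined and the caps are genuinely interior to $\widetilde{D}$, and to handle the degenerate configuration in which $\widetilde{t}_\alpha$ runs parallel and close to $\widetilde{\alpha}$. These are exactly the points at which the covering-transformation argument of Lemma~\ref{lemma:point_smaller1} has to be adapted rather than quoted verbatim.
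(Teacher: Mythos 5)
Your reduction to the primitive case is exactly the paper's first step, and your underlying mechanism --- the elementary bigons at the two ends of $\widetilde{t}_\alpha$ together with the disjointness of $\beta$ from its covering translates --- is also the right one: the paper's proof of Corollary~\ref{cor:no_whole_arc} is a two-line reduction to the argument already carried out in the proof of Lemma~\ref{lemma:one_height_one}. But as written your argument stops at its crux. The step you yourself label ``the hard part,'' namely ruling out that the offending translate $\Gamma(\widetilde{\beta})$ leaves $\widetilde{D}$ through the $\alpha$-arc $a$ rather than through $b$, is announced rather than proved, and your closing paragraph is a list of verifications still to be done. There is also a case missing from your dichotomy: the $\beta$-cap of the elementary bigon containing $\widetilde{z}$ (or $\widetilde{w}$) could lie on $\widetilde{\beta}$ itself but outside $b$, via a portion of $\widetilde{\beta}\setminus b$ that re-enters $\widetilde{D}$ through $a$ --- primitivity only controls $b\cap\widetilde{\alpha}$, not $\widetilde{\beta}\cap a$ --- and in that case ``$\Gamma(\widetilde{\beta})\cap\widetilde{\beta}\neq\emptyset$'' is no contradiction.

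Both issues are resolved by importing the proof of Lemma~\ref{lemma:one_height_one} more literally. One does not take ``the elementary bigon containing $\widetilde{z}$'' abstractly; one takes the specific elementary bigon $D''$ of type $\Xm Y^{\pm1}X$ occurring on $\widetilde{\beta}$ immediately after $x_n$, which lies on the opposite side of $\widetilde{\alpha}$ from $\widetilde{D}$. Some deck transformation $\Gamma$ carries $D''$ to the elementary bigon containing $\widetilde{z}\in\mathrm{int}(\widetilde{D})$; since this moves $D''$ from the far side of $\widetilde{\alpha}$ strictly into the half-plane containing $\widetilde{D}$, $\Gamma$ is a nontrivial translation transverse to the $\alpha$-lifts, oriented toward $\widetilde{D}$. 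That single observation removes the escape-through-$a$ worry wholesale: $\Gamma(b)\cap\widetilde{\alpha}=\Gamma\bigl(b\cap\Gamma^{-1}(\widetilde{\alpha})\bigr)=\emptyset$, because $\Gamma^{-1}(\widetilde{\alpha})$ is a lift of $\alpha$ strictly on the other side of $\widetilde{\alpha}$ while $b$ lies in the closed half-plane on $\widetilde{D}$'s side. Combined with $\Gamma(\widetilde{\beta})\cap\widetilde{\beta}=\emptyset$ and the fact that $\Gamma$ sends $x_n$ into $\mathrm{int}(\widetilde{D})$, this forces $\Gamma(\widetilde{D})\subseteq\widetilde{D}$, which is impossible for a nontrivial translation of a bounded region --- the paper concludes with this nesting contradiction rather than by producing a $\beta$--$\beta$ intersection point. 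So your plan is salvageable and close in spirit to the paper's, but the decisive step is precisely the one you left open, and it is cleaner to use the translation-nesting argument of Lemma~\ref{lemma:one_height_one} than to chase the exit point of $\Gamma(\widetilde{\beta})$ case by case.
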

\begin{proof}
	Assume $ D $ is a lift of a real bigon that contains a whole lift $ \widetilde{t}_{\alpha} $ of $ t_{\alpha} $, then $ \widetilde{t}_{\alpha} $ is contained in a primitive bigon whether or not $ D $ is primitive. However, it is obvious from the proof of Lemma \ref{lemma:one_height_one} that this is impossible.
\end{proof}

Let $ x_{k_1}, \cdots, x_{k_{d+1}} $ ($ k_1 = 2, k_{d+1} = n-1 $) be all points of height one. Suppose $ D $ has more than one height one primitive bigons ($ d \geq 2 $), then the first and the last height one points $ x_2 $ and $ x_{n-1} $ are connected by a series of primitive bigons $ D_i $ corresponding to the subword $ W_{k_i}^{k_{i+1}}, \ i = 1, \cdots, d $. We call $ D_1 $ and $ D_d $ the first and the last height one primitive bigons in $ W_1^n $, respectively.

\begin{lemma}\label{lemma:b_same_word}
	Suppose the two subarcs $ b_1 $ and $ b_2 $ correspond to the words $ XY^lX $ and $ \Xm \Ym^l \Xm $. If $ D $ contains at least two primitive bigons of height one, say $ D_1, \cdots, D_d $, ($ d \geq 2 $). Then $ P(S) = (1, 1) $ if and only if the first and last height one primitive bigons $ D_1 $ and $ D_{d} $ have the same orientation as $ D $; otherwise, $ P(S) = (0, 0) $.
\end{lemma}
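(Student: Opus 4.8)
The plan is to reduce the computation of $P(S)$ to deciding whether the quadrilateral $S$ encloses a complete lift of $t_\alpha$, and then to show that such a lift is trapped exactly when the two outermost height one bigons $D_1$ and $D_d$ bend $b$ outward, i.e.\ carry the same (positive) orientation as $D$. First the reductions: since $b_1,b_2$ correspond to $XY^lX$ and $\Xm\Ym^l\Xm$ with equal exponents, we have $n_z(S)=n_w(S)$, and Lemma \ref{lemma:point_smaller1} gives $\max\{\abs{n_z(S)},\abs{n_w(S)}\}\le 1$. Because $D$ is upward and positive we have $x_1<x_n$ on $\widetilde\alpha$, so $S$ inherits the positive orientation of $D$ and its multiplicities \eqref{eqn:PD} are nonnegative; hence $P(S)\in\{(0,0),(1,1)\}$. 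A single lift of $t_\alpha$ joins a lift of $w$ to a lift of $z$ inside the strip between $\alpha_0$ and $\alpha_1$, and since $n_z(S)=n_w(S)\le 1$ the two basepoints of $S$, if present, must be the endpoints of one such lift. Thus, in contrast with real bigons (Corollary \ref{cor:no_whole_arc}), the square $S$ \emph{may} enclose a complete lift $\widetilde t_\alpha$, and $P(S)=(1,1)$ precisely in that case, $P(S)=(0,0)$ otherwise. It therefore suffices to decide when $S$ contains a whole $\widetilde t_\alpha$.

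Next I would record the structure of the chain $D_1\cup\cdots\cup D_d$. Each $D_i$ is a primitive disk word whose two ends have opposite sign, so the height one bigons alternate in type: $D_1$ is upward (its first letter $X_2$ comes from $b_1=XY^lX$) and $D_d$ is again upward (its last letter $\Xm_{n-1}$ comes from $b_2$), with the intermediate ones alternating down/up. An upward $D_i$ lies in the strip above $\alpha_1$, disjoint from $S$. A downward $D_i$ must be \emph{elementary}: by Definition \ref{def:height} every interior point of $W_1^n$ has height $>0$, whereas a non-elementary downward bigon would force an interior point of $W_1^n$ of height $\le 0$. Such an elementary downward bigon merely indents $\partial D$ along the top edge $\alpha_1$ of $S$, and the region it bounds lies on the far side of the chain from the interior of $D$, hence outside $S$. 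Consequently the middle bigons $D_2,\dots,D_{d-1}$ do not affect which basepoints lie in $S$; only the local shape of $b$ at the two top corners $x_2$ and $x_{n-1}$, governed by $D_1$ and $D_d$, is relevant, which is the reason only the first and last bigons enter the statement.

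The core step is the local analysis at the corners. At $x_2$, the orientation of $D_1$ — computed from its elementary content by Lemma \ref{lemma:orientation} — determines on which side of the upward arc through $x_2$ the chain departs, hence whether $b_1$ together with the $\alpha_1$ arc emanating from $x_2$ sweeps across the upper ($z$) endpoint of the $t_\alpha$ lift adjacent to $x_2$; I would show this endpoint enters $S$ iff $D_1$ is positive, and symmetrically that the matching lower ($w$) endpoint enters $S$ through the corner $x_{n-1}$ iff $D_d$ is positive. Combining, a complete lift $\widetilde t_\alpha$ is enclosed by $S$ iff both $D_1$ and $D_d$ are positive, giving $P(S)=(1,1)$ in that case and $(0,0)$ in all others. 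To make the ``only if'' rigorous and exclude the mixed cases I would run the covering translation argument of Lemmas \ref{lemma:point_smaller1} and \ref{lemma:one_height_one}: assuming $S$ contains a basepoint pair while, say, $D_1$ is negative, one produces a translation $\Gamma$ carrying an elementary sub-bigon (or the corner $x_2$) into the interior of $D$, forcing $\widetilde\beta\cap\Gamma(\widetilde\beta)\ne\emptyset$, a contradiction.

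The main obstacle will be the bookkeeping in this corner analysis: precisely relating the positive/negative orientation of an \emph{upward} height one bigon to the side on which $b$ leaves $x_2$, and verifying that the indentations produced by the downward middle bigons never themselves trap the $(z,w)$ pair being tracked. Isolating the relevant local pictures, in parallel with the four cases of Corollary \ref{cor:b_diff_word} and Figure \ref{fig:b_diff_word} and with the elementary data of Table \ref{table:elementary_bigons}, and treating the boundary subcase $d=2$ where $D_1$ and $D_d$ are adjacent, is where the care lies; once the two corner captures are pinned down the claimed equivalence follows formally.
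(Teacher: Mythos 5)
Your proposal rests on two structural claims that are both false, so the reduction at its core does not work. First, the claim that $P(S)=(1,1)$ precisely when $S$ encloses a complete lift of $t_\alpha$: the paper's own Figure \ref{fig:b_same_word1} exhibits the $(1,1)$ case with the $w$ in $S$ lying near the bottom of $b_1$ and the $z$ in $S$ lying near the top of $b_2$, and these are endpoints of \emph{different} lifts of $t_\alpha$ (each has its partner outside $S$, cut off by $b_1$ or $b_2$). Since $b_1=XY^lX$ crosses $l$ lifts of $t_\alpha$, nothing forces the single $z$ and single $w$ in $S$ to pair up, so "decide when $S$ traps a whole $\widetilde t_\alpha$" is the wrong target. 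Second, the claim that the middle bigons $D_2,\dots,D_{d-1}$ cannot affect which basepoints lie in $S$ because a downward elementary $D_i$ bounds a region "outside $S$": such a bigon hangs from the top edge $a_1$ \emph{into} the square $S$ (it lies below $\alpha_1$, between $b_1$ and $b_2$), and the basepoint it encloses is a basepoint of $S$. This is exactly the configuration analyzed in the proof of Lemma \ref{lemma:sum_points} (Figure \ref{fig:sum_points}, right, where $S=S_w-D_s$ and $n_z(S)=1$ comes from the middle bigon $D_s$). So the reduction to a local analysis at the two corners $x_2$ and $x_{n-1}$ is unjustified; that only $D_1$ and $D_d$ end up mattering is a consequence of the lemma, not something you may assume.

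With both reductions gone, the remaining content of the proposal is a correct but peripheral observation ($P(S)\in\{(0,0),(1,1)\}$, downward height one bigons are elementary, $D_1$ and $D_d$ are upward) plus a deferred "run the covering translation argument," which is where the entire proof actually lives. For comparison, the paper's argument for the forward direction constructs specific translates: $\Gamma_1$ slides $b_2$ left to a position $\Gamma_1^{-1}(b_2)$ so that the square $S'$ between $b_1$ and $\Gamma_1^{-1}(b_2)$ is basepoint-free, and $\Gamma_2$ produces the copy $x_1'$ of $x_1$ on the height one line closest to $x_2$ from the left; disjointness of $\widetilde\beta$ from its translates then forces $\Gamma_1^{-1}(x_{n-1})<x_1'<x_2$ and rules out $x_{k_2}$ lying left of $x_2$, which is what "$D_1$ has the same orientation as $D$" means. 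The converse is a separate short argument (if $D_1$ and $D_d$ both open outward but $S$ is basepoint-free, one of $x_{k_2},x_{k_d}$ is forced into $S$, Figure \ref{fig:b_same_word2}). None of these steps appears in your write-up, so the proof is not salvageable as sketched.
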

\begin{proof}
	Note that $ P(S) = (1, 1) $ or $ (0, 0) $. Suppose $ P(S) = (1, 1) $. As shown in Figure \ref{fig:b_same_word1},
	\begin{figure}[htbp]
		\centering
		\begin{tikzpicture}
			\def\x{1.4}
			\draw[very thick, decoration={markings, mark=at position 1 with {\arrow[scale=1.5]{>}}}, postaction={decorate}] (-1*\x,0) -- (5*\x,0) node[right] {$ \alpha_0 $};
			\draw[very thick, decoration={markings, mark=at position 1 with {\arrow[scale=1.5]{>}}}, postaction={decorate}] (-1*\x,2*\x) -- (5*\x,2*\x) node[right] {$ \alpha_1 $};
			\draw[thick, green] (1*\x,0.5*\x) -- (0*\x,1.5*\x);
			\draw[thick, green] (3*\x,0.5*\x) -- (2*\x,1.5*\x);
			\draw (1*\x,0.5*\x)  circle(0.07) node[below] {$ w $};
			\draw (3*\x,0.5*\x)  circle(0.07);
			\draw[fill=black] (0*\x,1.5*\x) circle(0.07);
			\draw[fill=black] (2*\x,1.5*\x) circle(0.07) node[below] {$ z $};
			\draw[very thick, blue, decoration={markings, mark=at position 0.5 with {\arrow[scale=1.5]{>}}}, postaction={decorate}] (0.5*\x,-0.2*\x)
			to[start angle=90, next angle=90] (0.5*\x,0.3*\x)
			to[next angle=50](0.9*\x,1*\x) node[above left] {$ b_1 $}
			to[next angle=90](1.3*\x,1.7*\x) 
			to[next angle=90](1.3*\x,2.3*\x);
			\draw[fill=black] (0.5*\x, 0)  circle(0.07) node[below left] {$ x_1 $};
			\draw[fill=black] (1.3*\x, 2*\x)  circle(0.07) node[below right] {\small $ x_2 $};
			\draw[very thick, blue, decoration={markings, mark=at position 0.5 with {\arrow[scale=1.5]{>}}}, postaction={decorate}] (2.4*\x,2.3*\x)
			to[start angle=-90, next angle=-90] (2.4*\x,1.7*\x)
			to[next angle=-130](2*\x,1*\x) node[below right] {$ b_2 $}
			to[next angle=-90](1.6*\x,0.3*\x)
			to[next angle=-90](1.6*\x,-0.2*\x);
			\draw[fill=black] (1.6*\x, 0)  circle(0.07) node[below right] {$ x_n $};
			\draw[fill=black] (2.4*\x, 2*\x)  circle(0.07) node[below left] {\small $ x_{n-1} $};
			\draw (1.3*\x, 0.8*\x)  node {$ S $};
			\draw (0.2*\x, 0.8*\x)  node {$ S' $};
			
			\draw[shift ={(2*\x, 0)}, thick, red, decoration={markings, mark=at position 0.5 with {\arrow[scale=1.5]{>}}}, postaction={decorate}] (0.5*\x,-0.2*\x)
			to[start angle=90, next angle=90] (0.5*\x,0.3*\x)
			to[next angle=50](0.9*\x,1*\x) node[right] {\footnotesize $ \Gamma_1 (b_1) $}
			to[next angle=90](1.3*\x,1.7*\x) 
			to[next angle=90](1.3*\x,2.3*\x);
			\draw[red,fill=red] (3.3*\x, 2*\x)  circle(0.07) node[below right] {\footnotesize $ \Gamma_1(x_2) $};
			\draw[shift ={(-2*\x, 0)}, thick, magenta, decoration={markings, mark=at position 0.5 with {\arrow[scale=1.5]{>}}}, postaction={decorate}] (2.4*\x,2.3*\x)
			to[start angle=-90, next angle=-90] (2.4*\x,1.7*\x)
			to[next angle=-130](2*\x,1*\x) node[left] {\footnotesize $ \Gamma^{-1}_1(b_2) $}
			to[next angle=-90](1.6*\x,0.3*\x)
			to[next angle=-90](1.6*\x,-0.2*\x);
			\draw[magenta,fill=magenta] (0.4*\x, 2*\x)  circle(0.07) node[below left] {\footnotesize $ \Gamma^{-1}_1(x_{n-1}) $};
			
			\draw[shift ={(0.3*\x, 2*\x)}, thick, red, decoration={markings, mark=at position 0.5 with {\arrow[scale=1.5]{>}}}, postaction={decorate}] (0.5*\x,-0.2*\x)
			to[start angle=90, next angle=90] (0.5*\x,0.3*\x)
			to[next angle=50](0.9*\x,1*\x) node[above left] {\footnotesize $ \Gamma_2(b_1) $}
			to[next angle=90](1.3*\x,1.7*\x);
			\draw[red,fill=red] (0.8*\x, 2*\x)  circle(0.07) node[above left] {\small $ x'_1 $};
			\draw[shift ={(1.3*\x, 2*\x)}, thick, magenta, decoration={markings, mark=at position 0.5 with {\arrow[scale=1.5]{>}}}, postaction={decorate}] (2.4*\x,1.7*\x) to[start angle=-90,next angle=-130](2*\x,1*\x) node[below right] {\footnotesize $ \Gamma_3(b_2) $}
			to[next angle=-90](1.6*\x,0.3*\x)
			to[next angle=-90](1.6*\x,-0.2*\x);
			\draw[magenta,fill=magenta] (2.9*\x, 2*\x)  circle(0.07) node[above right] {\small $ x'_n $};
			
			\draw[thick, blue, decoration={markings, mark=at position 1 with \arrow{>}}, postaction={decorate}, dashed] (1.3*\x,2.3*\x)
			to[start angle=90, next angle=180](1.15*\x,2.6*\x)
			to[next angle=270](1*\x,2.3*\x)
			to[next angle=270](1*\x,1.7*\x);
			\draw[blue, fill=blue] (\x, 2*\x) circle(0.07);
			\draw[thick, blue, decoration={markings, mark=at position 1 with \arrow{>}}, postaction={decorate}, dashed] (1.3*\x,2.3*\x)
			to[start angle=90, next angle=90] (1.3*\x,2.4*\x)
			to[next angle=50](1.7*\x,3*\x)
			to[next angle=90](2.1*\x,3.7*\x) 
			to[next angle=90](2.1*\x,4*\x)
			to[next angle=180](1.3*\x,5*\x)
			to[next angle=270](0.5*\x,4*\x);
			
			\draw[shift= {(1.4*\x, 0)}, thick, blue, decoration={markings, mark=at position 0.1 with \arrow{>}}, postaction={decorate}, dashed] (1.3*\x,1.7*\x)
			to[start angle=90, next angle=90](1.3*\x,2.3*\x)
			to[next angle=180](1.15*\x,2.6*\x)
			to[next angle=270] (1*\x,2.3*\x);
			\draw[shift= {(1.4*\x, 0)}, blue, fill=blue] (1.3*\x, 2*\x) circle(0.07);
			\draw[thick, blue, decoration={markings, mark=at position 0.1 with \arrow{>}}, postaction={decorate}, dashed] (4.8*\x,4*\x)
			to[start angle=90, next angle=180] (4*\x,5*\x)
			to[next angle=-90](3.2*\x,4*\x)
			to[next angle=-130](2.8*\x,3.1*\x)
			to[next angle=-90](2.4*\x,2.4*\x)
			to[next angle=-90](2.4*\x,2.3*\x);
		\end{tikzpicture}
		\caption{The case where the subword $ W_2^{n-1} $ is not primitive and $ S $ contains basepoints. The two red arcs are translations of the arc $ b_1 $, and magenta arcs are translations of $ b_2 $. The two dashed arcs on the left are two choices of $ D_1 $ with opposite orientation as $ D $; the two dashed arcs on the right are two choices of $ D_d $ with opposite orientation as $ D $, all of them cannot occur.}
		\label{fig:b_same_word1}
	\end{figure}
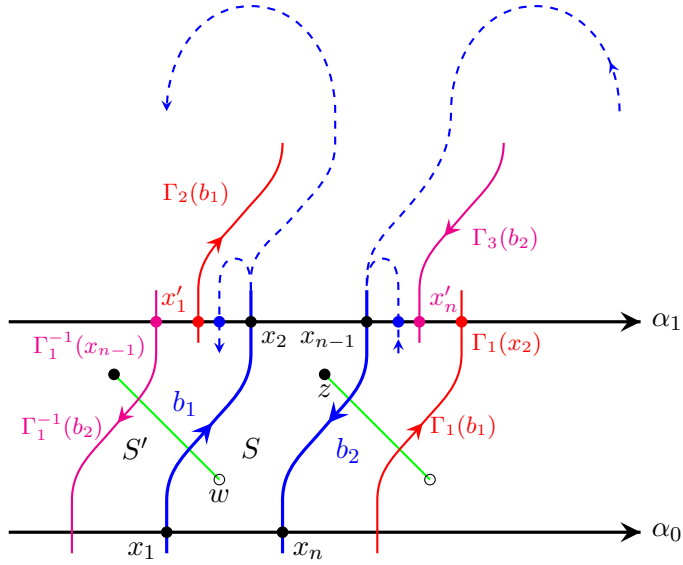
	consider the square domain $ S' $ bounded by the two lifts of the curve $ \alpha $, the arcs $ b_1 $ and $ \Gamma_{1}^{-1} (b_2) $, where $ \Gamma_{1} $ is a horizontal translation such that $ S' $ contains no basepoints. Let $ x'_1 $ be a copy of $ x_1 $ at the height one $ \alpha $-lifting and at the point on the left closest to $ x_2 $. There is a unique transformation that maps $ x_1 $ to $ x'_1 $, say $ \Gamma_2 $. One can find that $ x'_1 $ is on the right side of $ \Gamma^{-1}_1 (x_{n-1}) $, i.e.,
	$$ \Gamma^{-1}_1(x_{n-1}) < x'_1 < x_2. $$
	Because otherwise, the bigons $ \Gamma_2(D) $ and $ \Gamma^{-1}_1 (D) $ would overlap, which cannot happen.

	For the first height one primitive bigon $ D_1 $, consider the position of its other endpoint $ x_{k_2} $. Since any two different lifts of the curve $ \beta $ are disjoint, if $ x_{k_2} < x'_1 $, it forces that $ \Gamma_2 (D) $ contained in $ D_1 $, which cannot occur; on the other hand, $ x_{k_2} $ cannot be a point between $ x'_1 $ and $ x_2 $ since there is no basepoint contained in domain $ S' $. Thus $ x_{k_2} $ must be on the right side of $ x_2 $, i.e., $ D_1 $ has same orientation as $ D $. For the last height one primitive bigon $ D_d $, consider the copy of $ x_n $ at height one $ \alpha $-lifting and right closest to $ x_2 $, say $ x'_n $, there is a transformation $ \Gamma_3 $ that maps $ x_n $ to $ x'_n $. By the same argument, one can find that $ x_{k_{d-1}} $ be to the left side of $ x_{k_d} = x_{n-1} $, so that $ D_d $ has the same orientation as $ D $.

	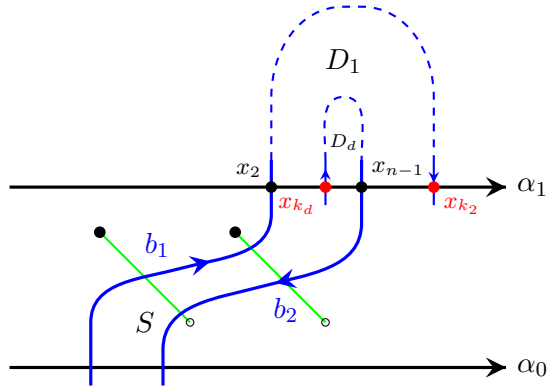
\begin{figure}[htbp]
		\centering
		\begin{tikzpicture}
			\def\x{1.2}
			\draw[very thick, decoration={markings, mark=at position 1 with {\arrow[scale=1.5]{>}}}, postaction={decorate}] (-0.5*\x,0) -- (5*\x,0) node[right] {$ \alpha_0 $};
			\draw[very thick, decoration={markings, mark=at position 1 with {\arrow[scale=1.5]{>}}}, postaction={decorate}] (-0.5*\x,2*\x) -- (5*\x,2*\x) node[right] {$ \alpha_1 $};
			\draw[thick, green] (1.5*\x,0.5*\x) -- (0.5*\x,1.5*\x);
			\draw[thick, green] (3*\x,0.5*\x) -- (2*\x,1.5*\x);
			\draw (1.5*\x,0.5*\x)  circle(0.05);
			\draw (3*\x,0.5*\x)  circle(0.05);
			\draw[fill=black] (0.5*\x,1.5*\x) circle(0.07);
			\draw[fill=black] (2*\x,1.5*\x) circle(0.07);
			\draw[very thick, blue, decoration={markings, mark=at position 0.6 with {\arrow[scale=1.5]{>}}}, postaction={decorate}] (0.4*\x,-0.2*\x) -- (0.4*\x,0.5*\x)
			to[start angle=90,next angle=15] (1.4*\x,1.1*\x) node[above left] {$ b_1 $}
			to[next angle=90] (2.4*\x,1.7*\x) -- (2.4*\x,2.3*\x);
			\draw[thick, blue, dashed] (2.4*\x,2.3*\x) -- (2.4*\x,3*\x)
			to[start angle=90, next angle=0] (3.2*\x,4*\x) -- (3.4*\x,4*\x) 
			to[next angle=-90] (4.2*\x,3*\x) -- (4.2*\x,2.3*\x);
			\draw[thick, blue, decoration={markings, mark=at position 0.5 with \arrow{>}}, postaction={decorate}] (4.2*\x,2.3*\x) -- (4.2*\x, 1.8*\x);
			\draw[thick, blue, decoration={markings, mark=at position 0.8 with \arrow{>}}, postaction={decorate}] (3*\x,1.8*\x) -- (3*\x, 2.3*\x);
			\draw[thick, blue, dashed] (3*\x,2.3*\x)
			to[start angle=90, next angle=0] (3.2*\x,3*\x) 
			to[next angle=-90](3.4*\x,2.3*\x);
			\draw[very thick, blue, decoration={markings, mark=at position 0.5 with {\arrow[scale=1.5]{>}}}, postaction={decorate}] (3.4*\x,2.3*\x) -- (3.4*\x,1.6*\x)
			to[start angle=-90,next angle=-165] (2.3*\x,0.9*\x) node[below right] {$ b_2 $}
			to[next angle=-90] (1.2*\x,0.2*\x) -- (1.2*\x,-0.2*\x);
			
			\draw[fill=black] (2.4*\x,2*\x) circle(0.07) node[above left] {\footnotesize $ x_2 $};
			\draw[fill=black] (3.4*\x,2*\x) circle(0.07) node[above right] {\footnotesize $ x_{n-1} $};
			\draw[color = red, fill=red] (4.2*\x,2*\x) circle(0.07) node[below right] {\footnotesize $ x_{k_2} $};
			\draw[color = red, fill=red] (3*\x,2*\x) circle(0.07) node[below left] {\footnotesize $ x_{k_d} $};
			\draw  (3.2*\x, 3.4*\x) node {$ D_1 $};
			\draw  (3.2*\x, 2.5*\x) node {\tiny $ D_d $};
			\draw  (1*\x, 0.5*\x) node {$ S $};
		\end{tikzpicture}
		\caption{The case where $ D_1 $ and $ D_d $ have the same orientation, but $ S $ does not contain the basepoint. This figure shows that $ x_{k_d} \in S $.}
		\label{fig:b_same_word2}
	\end{figure}

	While if $ P(S) = (0, 0) $, i.e., $ S $ does not contain any basepoints. Suppose $ D_1 $ and $ D_{d} $ have the same orientation as $ D $, then one of $ x_{k_2} $ and $ x_{k_d} $ lies in $ S $. This is impossible. See Figure \ref{fig:b_same_word2}.
\end{proof}

\begin{lemma}\label{lemma:sum_points}
	$ P(D) = P(S) + \sum\limits_{i=1}^{d} P(D_i) $.
\end{lemma}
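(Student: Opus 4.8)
The plan is to reduce the lemma to a single identity between oriented boundaries. Under the sign convention of Table \ref{table:elementary_bigons}, the pair $P(\cdot)=(n_z(\cdot),n_w(\cdot))$ records the signed local multiplicities of the region in question at the two basepoints; since both basepoints avoid every lift of $\alpha$ and of $\beta$, each multiplicity $n_p$ (for $p\in\{z,w\}$) depends only on the oriented boundary of the region, being the winding number of that boundary about $p$ up to a universal sign, and is therefore additive over signed regions. Consequently the lemma will follow once I prove
$$ \partial S+\sum_{i=1}^{d}\partial D_i=\partial D, $$
where $D$, $S$ and each $D_i$ are regarded as regions carrying the sign of their orientation.

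For this boundary identity I would write $\partial D=b-a_0$, where $a_0$ is the arc of $\alpha_0$ from $x_1$ to $x_n$, and decompose the $\beta$-arc as $b=b_1\cup b_{\mathrm{mid}}\cup b_2$ with $b_{\mathrm{mid}}$ running from $x_2=x_{k_1}$ to $x_{n-1}=x_{k_{d+1}}$. If $a_1'$ denotes the arc of $\alpha_1$ from $x_2$ to $x_{n-1}$, then by construction $\partial S=b_1+a_1'+b_2-a_0$. The portion $b_{\mathrm{mid}}^{(i)}$ of $b_{\mathrm{mid}}$ between consecutive height-one points $x_{k_i}$ and $x_{k_{i+1}}$ is the $\beta$-side of $\partial D_i$, and together with the $\alpha_1$-arc $a_1^{\prime(i)}$ joining those points it satisfies $\partial D_i=b_{\mathrm{mid}}^{(i)}-a_1^{\prime(i)}$. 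Summing and recombining yields
$$ \partial S+\sum_{i=1}^{d}\partial D_i=\partial D+\Bigl(a_1'-\sum_{i=1}^{d}a_1^{\prime(i)}\Bigr). $$
The parenthesized error term is a cellular $1$-chain supported on the single line $\alpha_1$, and its boundary vanishes: both $a_1'$ and $\sum_i a_1^{\prime(i)}$ have boundary $[x_{n-1}]-[x_2]$, the latter by telescoping $\sum_i([x_{k_{i+1}}]-[x_{k_i}])$. A cellular $1$-cycle on a line is the zero chain, so the error term is $0$ and the boundary identity holds; evaluating winding numbers about $z$ and about $w$ then gives the stated formula.

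The delicate point, which I expect to be the crux, is the orientation bookkeeping concealed in the telescoping step. The height-one points $x_{k_1},\dots,x_{k_{d+1}}$ need not appear in this order along $\alpha_1$ (compare Figure \ref{fig:b_same_word2}), so the region above $\alpha_1$ is genuinely \emph{not} a disjoint union of the $D_i$; the cancellation $\sum_i a_1^{\prime(i)}=a_1'$ is exactly what absorbs the resulting overshoots, and it functions only because each $a_1^{\prime(i)}$ is weighted by the orientation of $D_i$. Relatedly, a primitive bigon $D_i$ of negative orientation drops below $\alpha_1$ and overlaps $S$ rather than lying cleanly above it; the coefficient $-1$ attached to a negatively oriented region is precisely what deletes the notch that the boundary $b$ carves out of $D$ there. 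For this reason I would treat $D$, $S$ and all the $D_i$ as signed regions from the outset, fixing each sign by the orientation criterion of Lemma \ref{lemma:orientation}, and carry the whole argument out at the level of oriented boundaries rather than attempting any set-theoretic description of the gluing.
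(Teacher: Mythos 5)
Your argument is correct, but it reaches the identity by a genuinely different route than the paper. The paper proceeds by explicit geometric cutting with a case analysis (Figure \ref{fig:sum_points}): if the arc $a_1$ of $\alpha_1$ joining $x_2$ to $x_{n-1}$ lies entirely in $D$, it cuts $D$ along $a_1$ into $S$ and a (possibly non-primitive) bigon $D'$ which decomposes as $D_1+\cdots+D_d$; if not, it locates the outer-most height-one primitive bigon $D_s$ of the form $\Xm Y^{\pm 1}X$ dipping into $S$, cuts $D$ along two subarcs $a_2$ and $a_{n-1}$ of $\alpha_1$ into bigons $D'$, $D''$ and a hexagon $S_w$, and concludes from $D=D'+D''+S_w$ together with $S=S_w-D_s$. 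You instead establish the single chain-level identity $\partial S+\sum_i\partial D_i=\partial D$ and invoke additivity of the signed local multiplicities, realized as winding numbers of the oriented boundary about the lifts of $z$ and $w$; the telescoping of the $\alpha_1$-arcs plus the vanishing of $1$-cycles on a line does the rest. This eliminates the case analysis entirely: the signed-region bookkeeping you flag at the end is precisely what the paper's second case handles by hand, and your argument treats uniformly the configurations in which several height-one bigons have orientation opposite to $D$, whereas the paper's written proof explicitly subtracts only the outer-most such bigon and leaves the iteration implicit. What the paper's version buys is a concrete picture of how the domains fit together; what yours buys is brevity, uniformity, and an argument that does not depend on identifying which pieces overlap set-theoretically.
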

\begin{proof}
	Let $ a_1 $ be the subarc of the lift $ \alpha_1 $ that connects the points $ x_2 $ and $ x_{n-1} $.
If $ a_1 \subset D $. As illustrated in Figure \ref{fig:sum_points}, 
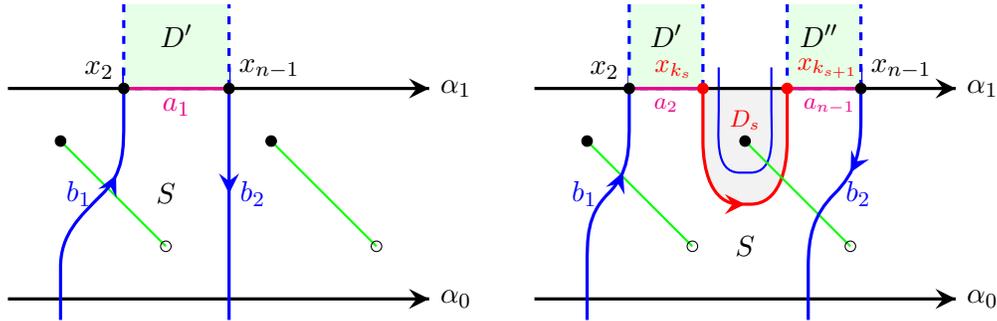
\begin{figure}[htbp]
		\centering
		\begin{tikzpicture}
		\def\x{1.4}
		\begin{scope}[shift = {(0, 0)}]
		\draw[very thick, decoration={markings, mark=at position 1 with {\arrow[scale=1.5]{>}}}, postaction={decorate}] (-0.5*\x,0) -- (3.5*\x,0) node[right] {$ \alpha_0 $};
		\draw[very thick, decoration={markings, mark=at position 1 with {\arrow[scale=1.5]{>}}}, postaction={decorate}] (-0.5*\x,2*\x) -- (3.5*\x,2*\x) node[right] {$ \alpha_1 $};
		\draw[thick, green] (1*\x,0.5*\x) -- (0*\x,1.5*\x);
		\draw[thick, green] (3*\x,0.5*\x) -- (2*\x,1.5*\x);
		\draw (1*\x,0.5*\x)  circle(0.07);
		\draw (3*\x,0.5*\x)  circle(0.07);
		\draw[fill=black] (0*\x,1.5*\x) circle(0.07);
		\draw[fill=black] (2*\x,1.5*\x) circle(0.07);
		\draw[very thick, blue, decoration={markings, mark=at position 0.6 with {\arrow[scale=1.5]{>}}}, postaction={decorate}] (0*\x,-0.2*\x) -- (0*\x,0.3*\x)
		to[start angle=90, next angle=45](0.4*\x,1*\x) node[left] {$ b_1 $}
		to[next angle=90](0.6*\x,1.7*\x) -- (0.6*\x,2.2*\x);
		\draw[very thick, blue, decoration={markings, mark=at position 0.5 with {\arrow[scale=1.5]{>}}}, postaction={decorate}] (1.6*\x,2.2*\x) -- (1.6*\x,\x) node[right] {$ b_2 $} -- (1.6*\x,-0.2*\x);
		\draw[very thick, blue, fill = green!10, dashed] (0.6*\x,2.8*\x) -- (0.6*\x,2*\x) -- (1.6*\x,2*\x) -- (1.6*\x,2.8*\x);
		\draw[very thick, color = magenta] (0.6*\x,2*\x) -- (1.6*\x, 2*\x);
		\draw[magenta] (1.1*\x, 2*\x) node[below] {$ a_1 $};
		\draw[fill=black] (0.6*\x,2*\x) node[above left] {$ x_2 $} circle(0.07);
		\draw[fill=black] (1.6*\x,2*\x) node[above right] {$ x_{n-1} $} circle(0.07);
		\draw (1.1*\x, 2.5*\x) node {$ D' $};
		\draw (1*\x, 1*\x) node {$ S $};
		\end{scope}
	
		\begin{scope}[shift = {(5*\x, 0)}]
		\draw[very thick, red, fill = gray!10, decoration={markings, mark=at position 0.5 with {\arrow[scale=1.5]{>}}}, postaction={decorate}] (1.1*\x,2*\x) -- (1.1*\x,1.5*\x)
		to[start angle=-90, next angle=0](1.45*\x,0.9*\x) -- (1.55*\x,0.9*\x)
		to[next angle=90](1.9*\x,1.5*\x) -- (1.9*\x,2*\x);
		\draw[red] (1.5*\x, 1.7*\x) node {\footnotesize $ D_s $};
		\draw[thick, blue] (1.25*\x,2.2*\x) -- (1.25*\x,1.8*\x)
		to[start angle=-90, next angle=0](1.45*\x,1.2*\x) -- (1.55*\x,1.2*\x)
		to[next angle=90](1.75*\x,1.8*\x) -- (1.75*\x,2.2*\x);
		
		\draw[very thick, decoration={markings, mark=at position 1 with {\arrow[scale=1.5]{>}}}, postaction={decorate}] (-0.5*\x,0) -- (3.5*\x,0) node[right] {$ \alpha_0 $};
		\draw[very thick, decoration={markings, mark=at position 1 with {\arrow[scale=1.5]{>}}}, postaction={decorate}] (-0.5*\x,2*\x) -- (3.5*\x,2*\x) node[right] {$ \alpha_1 $};
		\draw[thick, green] (1*\x,0.5*\x) -- (0*\x,1.5*\x);
		\draw[thick, green] (2.5*\x,0.5*\x) -- (1.5*\x,1.5*\x);
		\draw (1*\x,0.5*\x)  circle(0.07);
		\draw (2.5*\x,0.5*\x)  circle(0.07);
		\draw[fill=black] (0*\x,1.5*\x) circle(0.07);
		\draw[ fill=black] (1.5*\x,1.5*\x) circle(0.07);
		\draw[very thick, blue, decoration={markings, mark=at position 0.6 with {\arrow[scale=1.5]{>}}}, postaction={decorate}] (0*\x,-0.2*\x) -- (0*\x,0.3*\x)
		to[start angle=90,next angle=45](0.2*\x,1*\x) node[left] {$ b_1 $}
		to[next angle=90](0.4*\x,1.7*\x) -- (0.4*\x,2.2*\x);
		\draw[very thick, blue, decoration={markings, mark=at position 0.4 with {\arrow[scale=1.5]{>}}}, postaction={decorate}] (2.6*\x,2.2*\x) -- (2.6*\x,1.7*\x)
		to[start angle=-90, next angle=-135](2.35*\x,1*\x) node[right] {$ b_2 $}
		to[next angle=-90](2.1*\x,0.3*\x) -- (2.1*\x,-0.2*\x);
		\draw[very thick, blue, fill = green!10, dashed] (0.4*\x,2.8*\x) -- (0.4*\x,2*\x) -- (1.1*\x,2*\x) -- (1.1*\x, 2.8*\x);
		\draw[very thick, blue, fill = green!10, dashed] (1.9*\x,2.8*\x) -- (1.9*\x,2*\x) -- (2.6*\x,2*\x) -- (2.6*\x, 2.8*\x);
		\draw[very thick, color = magenta] (0.4*\x, 2*\x) -- (1.1*\x, 2*\x);
		\draw[very thick, color = magenta] (1.9*\x, 2*\x) -- (2.6*\x, 2*\x);

		\draw[fill=black] (0.4*\x,2*\x) circle(0.07) node[above left] {$ x_2 $};
		\draw[red, fill=red] (1.1*\x,2*\x) circle(0.07) node[above left] {\small $ x_{k_s} $};
		\draw[red, fill=red] (1.9*\x,2*\x) circle(0.07) node[above right] {\small $ x_{k_{s+1}} $};
		\draw[fill=black] (2.6*\x,2*\x) circle(0.07) node[above right] {$ x_{n-1} $};
		\draw[magenta] (0.75*\x, 2*\x) node[below] {\footnotesize $ a_2 $};
		\draw[magenta] (2.3*\x, 2*\x) node[below] {\footnotesize $ a_{n-1} $};

		\draw (0.75*\x, 2.5*\x) node {$ D' $};
		\draw (2.2*\x, 2.5*\x) node {$ D'' $};
		\draw (1.5*\x, 0.5*\x) node {$ S $};
		\end{scope}
		\end{tikzpicture}
		\caption{Cutting the primitive bigon $ D $.}
		\label{fig:sum_points}
	\end{figure}
cut $ D $ along $ a_1 $ into two domains: a square domain $ S $ and a real bigon (not necessarily primitive) connecting the points $ x_2 $ and $ x_{n-1} $, say $ D' $. So that $ D = S + D' $. Moreover, $ D' $ can be divided into a combination of primitive bigons $ D_1, \cdots, D_d $. Thus
	$$ P(D) = P(S) + P(D') = P(S) + \sum\limits_{i=1}^{d} P(D_i). $$

	If $ D $ does not contain the entire subarc $ a_1 $, there must be a height one primitive bigon of the form $ \Xm Y^{\pm 1} X $ contained in $ S $. Consider the outer-most one, and suppose it is the $ s $-th height one primitive bigon $ D_s $, whose two endpoints denoted by $ x_{k_s} $ and $ x_{k_{s+1}} $. Then $ D_s $ has the opposite orientation as $ S $ so that its corresponding word is $ \Xm_{k_s} \Ym X_{k_{s+1}} $. Let $ a_2 $ be the subarc of $ \alpha_1 $ that bounded by the points $ x_2 $ and $ x_{k_s} $, and $ a_{n-1} $ be the subarc that bounded by the points $ x_{k_{s+1}} $ and $ x_{n-1} $. Cutting the primitive bigon $ D $ along the line segments $ a_2 $ and $ a_{n-1} $, we obtain three domains: two of them are real bigons (not necessarily primitive) whose corresponding words are $ W_2^{k_s} $ and $ W_{k_{s+1}}^{n-1} $, denoted by $ D' $ and $ D'' $ respectively, and the last one is a hexagon contained in $ S $ that does not contain the basepoint $ z $, denoted by $ S_w $, see Figure \ref{fig:sum_points}. So that
	$$ D = D' + D'' + S_w, \qquad S = S_w - D_s. $$
	On the other hand, the two points $ x_2 $ and $ x_{k_s} $ are connected by a series of primitive bigons $ D_1, \cdots, D_{s-1} $, and points $ x_{k_{s+1}} $ and $ x_{k_{d+1}} $ are connected by primitive bigons $ D_{s+1}, \cdots, D_{d} $. Thus we obtain
	$$ \begin{aligned}
		P(D) & = P(D') + P(D'') + P(S_w) \\
		& = \sum_{i=1}^{s-1} P(D_i) + \sum_{i=s+1}^{d} P(D_i) + P(S_w) \\
		& = \sum_{i=1}^{d} P(D_i) - P(D_s) + P(S_w) \\
		& = \sum_{i=1}^{d} P(D_i) + P(S).
	\end{aligned} $$
\end{proof}

We have discussed in details on upward, positive primitive bigons. The result for the other three types of primitive bigons are similar. In summary, we have the following.

\begin{thm}\label{thm:PD_from_disk_word}
Let $ W_1^n $ be a primitive disk word and $D$ be its corresponding bigon in $ \mathbb{C} $, then the number of basepoints in $ D $ can be computed from the word $W_1^n$.
\end{thm}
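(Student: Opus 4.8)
The plan is to prove the statement by strong induction on the number $n$ of $X$-letters in the primitive disk word $W_1^n$, assembling the geometric inputs established in Lemmas \ref{lemma:orientation}, \ref{lemma:point_smaller1}, \ref{lemma:one_height_one}, \ref{lemma:b_same_word} and Corollary \ref{cor:b_diff_word} into a recursive computation of $P(D)$. First I would record that, by Lemma \ref{lemma:orientation}, the orientation (positive or negative) of the bigon $D$ corresponding to any disk word is read off directly from the word by comparing the number of positive and negative elementary subbigons it contains; together with whether $\hat{X}_1 = X_1$ or $\overline{X}_1$, this tells us which of the four types (upward/downward, positive/negative) $W_1^n$ belongs to. By the symmetry that reverses the orientation of $\alpha$ and/or $\beta$ and/or swaps the basepoints $z,w$ (which permutes the four types and correspondingly permutes the entries of $P$), it suffices to treat the upward positive case; the remaining three cases follow by the same argument after relabelling.

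For the base case $n = 2$ the disk word is elementary and $P(D)$ is given directly by Table \ref{table:elementary_bigons}. For the inductive step, assume $n > 2$ and that $P$ is computable for every primitive disk word with fewer than $n$ $X$-letters. I would read off the subwords $W_1^2 = X Y^l X$ and $W_{n-1}^n = \overline{X}\,\overline{Y}^{l'}\overline{X}$, the integers $l, l'$ being the signed $Y$-counts obtained purely combinatorially; these are the words of the subarcs $b_1$ and $b_2$ bounding the square domain $S$. Lemma \ref{lemma:point_smaller1} forces $|l - l'| \le 1$, and $P(D)$ splits as $P(D) = P(S) + \sum_{i=1}^d P(D_i)$ by Lemma \ref{lemma:sum_points}, where $D_1, \dots, D_d$ are the height-one primitive bigons corresponding to the subwords $W_{k_i}^{k_{i+1}}$ with $k_1 = 2$, $k_{d+1} = n-1$. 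Each $D_i$ has its endpoints among $x_2, \dots, x_{n-1}$, hence has strictly fewer than $n$ $X$-letters and is itself a primitive disk word by Lemma \ref{lemma:primitive_word_are_real}, so every $P(D_i)$ is computable by the induction hypothesis.

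It remains to compute $P(S)$ from the word, which is exactly what the preceding lemmas provide. If $|l - l'| = 1$, then $P(S)$ is read off from Table \ref{table:b_diff_word} in Corollary \ref{cor:b_diff_word} according to which of the four pairs $(W_1^2, W_{n-1}^n)$ occurs. If $l = l'$, then $n_z(S) = n_w(S)$, so $P(S) \in \{(0,0),(1,1)\}$; when $d = 1$ (equivalently $W_2^{n-1}$ is primitive) Lemma \ref{lemma:one_height_one} gives $P(S) = (0,0)$, and when $d \ge 2$ Lemma \ref{lemma:b_same_word} gives $P(S) = (1,1)$ if the first and last height-one bigons $D_1, D_d$ share the orientation of $D$ and $P(S) = (0,0)$ otherwise. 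The orientations of $D_1$ and $D_d$ are themselves determined by their words via Lemma \ref{lemma:orientation}. Substituting these values into the identity of Lemma \ref{lemma:sum_points} yields $P(D)$ entirely in terms of $W_1^n$, completing the induction.

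I expect the main obstacle to be organizational rather than geometric: verifying that the induction is genuinely well-founded (that each height-one piece $D_i$ is a strictly shorter primitive disk word of one of the four types, so the hypothesis applies), and that the case distinction on $P(S)$ — the trichotomy $|l-l'| = 1$ versus $l = l'$ with $d = 1$ versus $l = l'$ with $d \ge 2$ — is exhaustive, with every datum it depends on ($l$, $l'$, $d$, and the orientations of $D_1, D_d$) computable from the word alone. The underlying geometry has already been discharged by Lemmas \ref{lemma:point_smaller1}--\ref{lemma:b_same_word}, so the work here is to confirm that their hypotheses are met at each recursive step and to package the four symmetric cases uniformly.
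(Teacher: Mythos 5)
Your proposal is correct and follows essentially the same route as the paper: induction on the length of the word, with the elementary disk words of Table \ref{table:elementary_bigons} as the base case, the decomposition $P(D)=P(S)+\sum_i P(D_i)$ of Lemma \ref{lemma:sum_points} driving the inductive step, and $P(S)$ read off from Corollary \ref{cor:b_diff_word}, Lemma \ref{lemma:one_height_one}, and Lemma \ref{lemma:b_same_word}. Your write-up is in fact more explicit than the paper's about the exhaustiveness of the case split on $(l,l',d)$ and the reduction of the other three types to the upward positive case.
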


\begin{proof} We prove by induction on the length of the word $W_1^n$. The conclusion holds for the four elementary disk words from Figure \ref{fig:elementary_bigons}.

For the case of length $ n $, each height one primitive bigon $ D_i $ has length less than $ n $, so that all of $ P(D_i) $ can be read out from the subword of $ W_1^n $ by induction. On the other hand, Lemmas \ref{cor:b_diff_word} and \ref{lemma:b_same_word} imply that $P(S)$ can be computed from the word $ W_1^n $. Theorem \ref{thm:PD_from_disk_word} then follows by Lemma \ref{lemma:sum_points}.
\end{proof}

\clearpage
\section{The fundamental group and $\widehat{HFK}$}\label{sec:alg}

Now we describe the algorithm for computing $ \widehat{HFK}(S^3, K) $ from a presentation of $\pi_1(S^3\setminus K)$ from a $(1,1)$ Heegaard diagram.

\begin{algo}\label{algo:main}
Let $K$ be a $(1,1)$ knot and $ \langle X, Y \,|\, R(X, Y) \rangle $ be a presentation of $\pi_1(S^3\setminus K)$ from a $(1,1)$ Heegaard diagram. The chain complex $\widehat{CFK}(S^3,K)$ has generators corresponding to the letters $ X $ and $ \Xm $ in the relator $ R $ and trivial differential. The Alexander and Maslov gradings are determined as follows:
\renewcommand{\theenumi}{\Roman{enumi}}
\begin{enumerate}
\item \label{step_bigon} Enumerate all the primitive disk words from the relator $ R $ and determine their orientation by Table \ref{table:elementary_bigons} and Lemma \ref{lemma:orientation}.
\item \label{step_basepoint} For each primitive disk word $W_1^n $, $P(W_1^n) = \left(n_z(W_1^n), n_w(W_1^n)\right) $ is computed iteratively as follows:
\begin{enumerate}[ref=\theenumi(\theenumii)]

\item\label{step_elementary} $n_z$ and $n_w$ for the elementary disk words are given in Table \ref{table:elementary_bigons};
		
\item\label{step_basepoint_key} If $ W_1^n $ is upward and positive with $n>2$. Let $ x_{k_1}, \cdots, x_{k_{d+1}} $ ($ k_1 = 2, k_{d+1} = n-1 $) be all points of height one. Cut $ W_1^n $ into primitive disk words $ W_{k_i}^{k_{i+1}}, \ i = 1, \cdots, d $, and two subwords $ W_1^2 = X_1 Y^l X_2 $ and $ W_{n-1}^n = \Xm_{n-1} \Ym^{l'} \Xm_n $ ($ l,l'\in\bbz $, and $ |l - l'| \leq 1 $ by Lemma \ref{lemma:point_smaller1}). Then by Lemma \ref{lemma:sum_points}
		$$ P(W_1^n) = P(S) + \sum_{i=1}^{d} P(W_{k_i}^{k_{i+1}}), $$
where $ P(S) $ is determined by the following: 
		\begin{enumerate}
			\item If $ |l - l'| = 1 $, $ P(S) $ are shown in Table \ref{table:b_diff_word} (Corollary \ref{cor:b_diff_word}).
			\item If $ l = l' $, then by Lemmas \ref{lemma:one_height_one} and \ref{lemma:b_same_word}
			$$ P(S) = \left\{ \begin{aligned}
				(1,1), & \quad  \text{$ d \geq 2 $ and $ W_{2}^{k_{2}} $ and $ W_{k_d}^{n-1} $ are positive}; \\
				(0, 0), & \quad \text{otherwise}.
			\end{aligned}\right. $$
		\end{enumerate}
		
		\item If $ W_1^n $ is upward and negative. A upward positive disk word $ \widetilde{W}_1^n $ can be obtained from $ W_1^n $ by replacing each $ Y $-letter by its inverse, then
		$$ P(W_1^n) = - P(\widetilde{W}_1^n). $$
		
		\item If $ W_1^n $ is downward and negative. A upward positive disk word $ \widetilde{W}_1^n $ can be obtained from $ W_1^n $ by replacing each $ X $-letter by its inverse, then
		$$ (n_z(W_1^n), n_w(W_1^n)) = - (n_w(\widetilde{W}_1^n), n_z(\widetilde{W}_1^n)). $$
		
		\item If $ W_1^n $ is downward and positive. A upward positive disk word $ \widetilde{W}_1^n $ can be obtained from $ W_1^n $ by replacing each $ X $- and $ Y $-letter by its inverse, then
		$$ (n_z(W_1^n), n_w(W_1^n)) = (n_w(\widetilde{W}_1^n), n_z(\widetilde{W}_1^n)). $$
	\end{enumerate}
	
	\item \label{step_relative} For any two points $ x_1 $ and $ x_n $ that connected by a primitive disk word $ W_1^n $, the relative Alexander is determined by the equation:
	\begin{equation}\label{eqn:alexander_relative}
		F(x_1) - F(x_n) = n_z(W_1^n) - n_w(W_1^n),
	\end{equation}
	and the relative Maslov grading can be computed as:
	\begin{equation}\label{eqn:maslov_relative}
		M(x_1) - M(x_n) = \left\{\begin{aligned}
			& 1 - 2 n_w(W_1^n), &  & \text{if $ W_1^n $ is positive}; \\
			& -1 - 2 n_w(W_1^n), &  & \text{if $ W_1^n $ is negative}.
		\end{aligned} \right.
	\end{equation}
	
	\item \label{step_absolute} The absolute Alexander grading can be obtained by requiring
	\begin{equation}\label{eqn:alexander_grading_absolute}
		\# \{x \,|\, F(x) = i\} \equiv \#\{x \,|\, F(x) = -i \} \pmod{2}, \quad \forall \, i \in \mathbb{Z}. 
	\end{equation}
	To obtain the absolute Maslov grading, we forget the basepoint $ z $, and to find a unique intersection point which is the generator of $ \widehat{HF}(T^2, \alpha, \beta, w) \cong \mathbb{Z} $. This process can only be done on the relator $ R(X, Y) $, as follows:
	\begin{enumerate}
		\item Erasing primitive bigons whose $ n_w = 0 $. (For example, all elementary bigons $ \Xm^k Y^{\pm 1} X^k $ are removed.) The resulting relator is denoted by $ R'(X, Y) $.
		\item The basepoint $ w $ also gives a relative grading by the equation
		$$ w(x_1) - w(x_n) = n_w(W_1^n). $$
		For the relator $ R'(X, Y) $, delete those new primitive bigons whose relative $ w $-grading of two endpoints is zero. (Here the relative grading is given by $ R $, not $ R' $.) The resulting relator is denoted by $ R''(X, Y) $.
		\item There will be exactly one $ X $-letter in $ R''(X, Y) $, assuming it is the $ k $-th one. Thus we can define $ M(x_k) = 0 $ to obtain an absolute $ M $-grading of all points.
	\end{enumerate}
\end{enumerate}
\end{algo}

In Step \ref{step_absolute}, all primitive bigons deleted in the above two procedures do not contain the basepoint $ w $, so it can be realized by isotopies of the $ \beta $ curve in the complement of $ w $, and the resulting diagram $ (T^2, \alpha, \beta'', w) $ specifies $ S^3 $, where $ \beta'' $ is the curve obtained from $ \beta $ by isotopies. We need to show that there is exactly one $ X $-letter in $ R''(X, Y) $. Specifically, we explain that no primitive bigon remains after these two steps. The proof is the same as the proof of Lemma \ref{lemma:one_height_one}. Suppose $ R'' $ contains a primitive bigon $ D_1 $ with $ n_w(D_1) \neq 0 $, and $ x_1 $ and $ x_n $ are two endpoints of $ D_1 $. Assume it is upward without loss of generality. Since the algebraic intersection number $ [\alpha] \cdot [\beta''] = [\alpha] \cdot [\beta] = +1 $, there will be another intersection point $ x' $. Suppose $ x' $ is adjacent to $ x_n $, and they form a primitive bigon $ D_2 $. Since $ D_2 $ remains after the second step, the $ w $-grading implies that $ n_w(D_2) \neq 0 $, i.e., $ D_2 $ also contains the basepoint $ w $. On the other hand, $ D_2 $ is downward. Therefore, there exists a translation $ \Gamma $ such that $ \Gamma(D_2) $ and $ D_1 $ are overlapped, i.e., $ \Gamma(\beta'') \cap \beta'' \neq \emptyset $, which cannot happen.

To summarize, all the steps described above are relevant only to $ R $. So we have the following theorem:

\begin{thm}\label{thm:algo}
	Let $ K $ be a $ (1, 1) $ knot in $ S^3 $, $ \langle X, Y \,|\, R(X, Y) \rangle $ be a cyclically reduced presentation of $ \pi_1(S^3 \setminus K) $ that is coming from a $(1,1)$ Heegaard diagram. The generators of the knot Floer homology of $ K $ are one-to-one correspondence to the $ X $-letters in the relator $ R $, denoted $ x_i $ corresponding to the $ i $-th $ X $-letter; the Alexander and Maslov gradings of each generator can be determined by the above algorithm which involves only the relator $ R(X, Y) $. The knot Floer homology can be expressed as follows:
	$$ \widehat{HFK}_m(S^3, K;s) = \bigoplus_{\{ x_i \,|\, F(x_i) = s, M(x_i) = m\}} \mathbb{Z} \cdot \langle x_i \rangle. $$
\end{thm}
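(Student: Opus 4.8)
The plan is to verify that Algorithm \ref{algo:main} reproduces the bigraded group $\widehat{HFK}(S^3,K)$ by assembling the structural facts established in Section \ref{sec:bigon}. First I would record the two inputs supplied by the discussion at the beginning of Section \ref{sec:bigon}: the generators of $\widehat{CFK}(S^3,K)$ are the points of $\alpha\cap\beta$ in $T^2$, which by the dictionary of Subsection \ref{subsec:11knot} correspond bijectively to the $X$-letters of $R$; and because $R$ is cyclically reduced (Assumption \ref{assump:relator}\eqref{assump:relator_3}) every bigon carries at least one basepoint, so no class $\phi$ has $\mu(\phi)=1$ and $n_w(\phi)=n_z(\phi)=0$. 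Hence $\widehat\partial_K\equiv 0$ and $\widehat{HFK}(S^3,K)\cong\widehat{CFK}(S^3,K)$ as bigraded groups. This reduces the theorem to showing that the algorithm computes the Alexander and Maslov gradings of each generator, which is what I address next.

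Second, I would establish the relative gradings. Given a primitive disk word $W_1^n$ with endpoints $x_1,x_n$, Lemma \ref{lemma:primitive_word_are_real} identifies it with a primitive bigon $D$, i.e. an embedded Whitney disk representing a class $\phi\in\pi_2(x_1,x_n)$ (here ${\rm Sym}^1(T^2)=T^2$, so Whitney disks are honest bigons). Equation \eqref{eqn:alex_grading_relative} then gives $F(x_1)-F(x_n)=n_z(\phi)-n_w(\phi)$, and Theorem \ref{thm:PD_from_disk_word} evaluates $P(\phi)=(n_z(W_1^n),n_w(W_1^n))$ directly from the word, which is exactly \eqref{eqn:alexander_relative}. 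For the Maslov grading I would use that an embedded bigon has Maslov index $+1$ when traversed as $\phi\in\pi_2(x_1,x_n)$ with $D$ positively oriented and $-1$ when negatively oriented; substituting into ${\rm gr}(x_1,x_n)=\mu(\phi)-2n_w(\phi)$ yields \eqref{eqn:maslov_relative}. Since between any two generators (all in the same $\mathrm{Spin}^c$ structure, as $H_1(S^3)=0$) there is a Whitney disk whose domain decomposes into primitive bigons as explained after Definition \ref{def:primitive_bigon}, together with possibly copies of the torus $[T^2]$ which affect neither grading ($n_z=n_w$ on $[T^2]$, and $\mu([T^2])-2n_w([T^2])=0$), summing the per-bigon contributions determines all relative gradings. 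This is Step \ref{step_relative}, the orientation case analysis of Step \ref{step_basepoint} (Corollary \ref{cor:b_diff_word} and Lemmas \ref{lemma:one_height_one}, \ref{lemma:b_same_word}, \ref{lemma:sum_points}) supplying $P(W_1^n)$ in every type.

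Third, I would pin down the absolute gradings. The absolute Alexander grading is forced by the symmetry \eqref{eqn:alex_grading_symmetry}, reproduced as \eqref{eqn:alexander_grading_absolute}, once all relative Alexander gradings are known. For the absolute Maslov grading I would forget the basepoint $z$, so that $(T^2,\alpha,\beta,w)$ is a genus-one Heegaard diagram of $S^3$ with $\widehat{HF}(S^3)\cong\mathbb Z$ supported in Maslov grading $0$. The passages $R\rightsquigarrow R'\rightsquigarrow R''$ of Step \ref{step_absolute} delete only primitive bigons with $n_w=0$, which are realized by isotopies of $\beta$ in the complement of $w$; these change neither the manifold $S^3$ nor the Maslov gradings of the surviving generators. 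Declaring the lone surviving generator to lie in grading $0$ fixes the absolute grading, and the bigraded decomposition in the statement follows.

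Finally, the main obstacle is the last claim of Step \ref{step_absolute}: that after the two deletions exactly one $X$-letter remains. I would prove this by the covering-transformation/overlap argument already used for Lemma \ref{lemma:one_height_one}: if a primitive bigon $D_1$ with $n_w(D_1)\neq 0$ survived, then since $[\alpha]\cdot[\beta'']=+1$ an adjacent intersection point would produce an oppositely oriented primitive bigon $D_2$ with $n_w(D_2)\neq 0$, and a translation $\Gamma$ would force $\Gamma(\beta'')\cap\beta''\neq\emptyset$, a contradiction. The delicate bookkeeping I expect to handle with care is the joint realizability of both deletion steps by $w$-avoiding isotopies, and the fact that the relative $w$-grading used in the second deletion must be the one inherited from $R$ rather than from $R'$.
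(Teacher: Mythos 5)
Your proposal is correct and follows essentially the same route as the paper: zero differential from cyclic reducedness, relative gradings read off primitive bigons via Lemma \ref{lemma:primitive_word_are_real}, Theorem \ref{thm:PD_from_disk_word} and the $\pm 1$ Maslov index of embedded bigons, absolute gradings fixed by the symmetry \eqref{eqn:alexander_grading_absolute} and by forgetting $z$, and the uniqueness of the surviving $X$-letter in $R''$ established by the same $\Gamma(\beta'')\cap\beta''=\emptyset$ overlap argument the paper borrows from Lemma \ref{lemma:one_height_one}. The two points you flag as delicate (realizability of the deletions by $w$-avoiding isotopies, and using the $w$-grading inherited from $R$ rather than $R'$) are exactly the ones the paper addresses in the paragraph preceding the theorem, and your treatment of them matches.
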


Rewrite the knot Floer homology as the Poincar\'e polynomial
$$ P_R(t, q) = \sum_{s, m \in \bbz} {\rm rank} \widehat{HFK}_m(S^3, K;s) \cdot t^s q^m = \sum_{i} t^{F(x_i)} q^{M(x_i)}. $$


\begin{prop}\label{prop:replace_t_alpha}
Let $ \langle X, Y \,|\, R \rangle $ be a presentation coming from a $ (1,1) $ Heegaard diagram, and $ P_R(t,m) $ be the Poincar\'e polynomial given by the algorithm.
\begin{enumerate}
		\item Let $ R' $ be the relator obtained from $ R $ by doing the transformation $ l_k: X \mapsto Y^kX, Y \mapsto Y $ (resp. $r_k: X \mapsto XY^k, Y \mapsto Y $) for an integer $ k $, and then reduced. Then $ \langle X, Y \,|\, R' \rangle $ is also a presentation coming from a $ (1,1) $ Heegaard diagram, and $ P_{R'}(t, q) = P_R(t, q) $.
		\item Let $ R'' $ be the relator obtained by transforming $ \tau: X \mapsto X, Y \mapsto \Ym $. Then $ \langle X, Y \,|\, R' \rangle $ is also a presentation coming from a $ (1,1) $ Heegaard diagram, and $ P_{R''}(t, q) = P_R(t^{-1}, q^{-1}) $.
	\end{enumerate}
\end{prop}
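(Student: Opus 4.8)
The plan is to realize each relator move geometrically on the $(1,1)$ diagram $(T^2,\alpha,\beta,w,z)$ that produces $R$, and then invoke Theorem~\ref{thm:algo}. For $l_k,r_k$ the move leaves the knot $K$ unchanged, so the Poincar\'e polynomial cannot change; for $\tau$ the move replaces the diagram by one for the mirror $K^*$, which the algorithm will record as $(F,M)\mapsto(-F,-M)$.

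For part (1) I would first identify $l_k$ (resp.\ $r_k$) with a change of the auxiliary arc: let $t_\alpha^{(k)}$ be obtained by winding $t_\alpha$ $k$ times around a core circle $c$ of the annulus $T^2\setminus\alpha$. Such a $c$ is a parallel pushoff of $\alpha$, so it meets $\beta$ once near each point of $\alpha\cap\beta$ and with the same local sign; band-summing $t_\alpha$ with $c$ therefore inserts one $Y^{\pm1}$ next to each $X$-letter, which after reduction is exactly the substitution $X\mapsto Y^kX$ (resp.\ $X\mapsto XY^k$). Because $t_\alpha$ and $t_\alpha^{(k)}$ share their endpoints $w,z$ and the winding is in the meridional direction of $V_\alpha$ (where $\alpha$ bounds a disk, so the winding is undone by an isotopy rel endpoints), the pushed-in arcs are isotopic in $V_\alpha$ and $\langle X,Y\mid R'\rangle$ comes from the \emph{same} diagram $(T^2,\alpha,\beta,w,z)$ of the \emph{same} knot $K$, only renamed through $t_\alpha^{(k)}$. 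The Alexander and Maslov gradings are defined from $z,w$ and the bigons of $T^2\setminus(\alpha\cup\beta)$ alone, none of which changed, so by Theorem~\ref{thm:algo} the algorithm returns the same data from $R$ and from $R'$, i.e. $P_{R'}=P_R$.

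For part (2) I would argue directly from Algorithm~\ref{algo:main}. The move $\tau$ fixes every $X$-letter and its sign, hence gives a bijection of generators and preserves the upward/downward type of each disk word, while by Lemma~\ref{lemma:orientation} it reverses the orientation of every primitive bigon (on elementary bigons this is visible in Table~\ref{table:elementary_bigons}, which simultaneously shows $P\mapsto-P$). The key identity is $P(\tau(W_1^n))=-P(W_1^n)$ for every primitive disk word, and this is essentially built into the algorithm: for upward words it is the defining rule that passes to the upward-positive partner by inverting all $Y$-letters, and the downward cases follow by combining that rule with the $X\mapsto\Xm$ rules. Feeding $P\mapsto-P$ together with the orientation flip into the grading formulas, the relative Alexander grading $n_z-n_w$ and the relative Maslov grading $\pm1-2n_w$ both change sign; the symmetrization that fixes the absolute Alexander grading is invariant under $i\mapsto-i$, and the absolute Maslov normalization deletes exactly the primitive bigons with $n_w=0$, a condition preserved by $\tau$, so the same distinguished $X$-letter is put in degree $0$. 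Hence each generator moves from $(F,M)$ to $(-F,-M)$ and
\[
P_{R''}(t,q)=\sum_i t^{-F(x_i)}q^{-M(x_i)}=P_R(t^{-1},q^{-1}).
\]

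As a geometric check on part (2), $R''=\tau(R)$ is exactly the relator read off $(-T^2,-\alpha,\beta,w,z)$: reversing the orientations of $T^2$ and of $\alpha$ preserves the sign of each $\alpha$-crossing but reverses the sign of each $t_\alpha$-crossing, and this diagram presents $K^*$, so the displayed identity is the mirror symmetry $\widehat{HFK}_m(S^3,K^*;s)\cong\widehat{HFK}_{-m}(S^3,K;-s)$ of \cite{OS04a}. I expect the main obstacle to be the bookkeeping of the chirality convention underlying Table~\ref{table:elementary_bigons} (the normalization that $XY\Xm$ bounds the bigon containing $w$): one must make sure the winding in part (1) does \emph{not} interchange the roles of $w$ and $z$ — otherwise the algorithm would return $P_R(t^{-1},q^{-1})$ instead of $P_R$ — whereas $\tau$ in part (2) \emph{does} interchange them. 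This is why I would keep part (1) purely geometric (the winding fixes $w$, $z$ and all of $\alpha\cup\beta$, so the side of each elementary bigon carrying a basepoint is unchanged) and prove part (2) by the direct grading computation above, which forces $(F,M)\mapsto(-F,-M)$ without relying on the geometric interpretation.
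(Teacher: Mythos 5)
Your proposal is correct, and for part (1) it is essentially the paper's argument: the paper realizes $l_1$ (resp.\ $r_1$) as a handleslide of $t_\alpha$ across $\alpha$, observes that each $\alpha\cap\beta$ point acquires an adjacent intersection with the new arc $t_{\alpha'}$ (so $R'=l_1(R)$ after reduction), and concludes from the isotopy of $t_\alpha$ and $t_{\alpha'}$ in $V_\alpha$ that both relators come from $(1,1)$ diagrams of the same knot $K$, whence $P_{R'}=P_R$ by Theorem~\ref{thm:algo}; your ``winding around a core of $T^2\setminus\alpha$'' is the same move. For part (2) you diverge: the paper simply notes that inverting all $Y$-letters amounts to reversing the orientation of $t_\alpha$, i.e.\ swapping $z$ and $w$, so $R''$ comes from a $(1,1)$ diagram of the mirror $K^*$, and then quotes the mirror symmetry $\widehat{HFK}_m(S^3,K^*;s)\cong\widehat{HFK}_{-m}(S^3,K;-s)$ — the statement you relegate to a ``geometric check.'' Your primary argument instead verifies $(F,M)\mapsto(-F,-M)$ internally to Algorithm~\ref{algo:main}: $P(\tau(W))=-P(W)$ on primitive disk words (which, as you note, is essentially encoded in rules (c)--(e) of Step~II and Table~\ref{table:elementary_bigons}), the orientation of every bigon flips by Lemma~\ref{lemma:orientation}, both relative gradings change sign, and both absolute normalizations are preserved since $n_w=0$ iff $-n_w=0$. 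This is more self-contained (it does not invoke the mirror formula for $\widehat{HFK}$) and has the side benefit of applying verbatim to pseudo-geometric presentations in Section~\ref{sec:alexander}, at the cost of more bookkeeping; the paper's version is shorter but leans on an external theorem. Your closing remark about the chirality convention is exactly the point the paper's footnote in Section~\ref{sec:pre} addresses.
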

\begin{proof}
	The difference between two relators $ R $ and $ R' $ is the choice of the arc $ t_{\alpha} $ on $ T^2 - \alpha $. We consider only the case where $ k = 1 $ because $ l_k = l_1^k $ (resp. $ r_k = r_1^k $). Let $ t_{\alpha'} $ be the arc obtained by handlesliding $ t_{\alpha} $ across $ \alpha $ (with orientation), see Figure \ref{knot3_1} for an example on the trefoil knot. Then the intersection of $ t_{\alpha} $ and $ \beta $ is one-to-one corresponding to the intersection of $ t_{\alpha'} $ and $ \beta $; and each intersection point of $ \alpha $ and $ \beta $ corresponds to two intersection points of $ \beta $ with $ \alpha $ and $ t_{\alpha'} $, which are consecutive in $ \beta $. Thus, $ R' $ is the image of $ R $ in $ l_1 $ or $ r_1 $. It is clear that $ R' $ also comes from same Heegaard diagram as $ R $, and $ t_{\alpha} $ and $ t_{\alpha'} $ are isotopic in the solid torus $ V_{\alpha} $. Therefore, $ R $ and $ R' $ represent the same knot.

	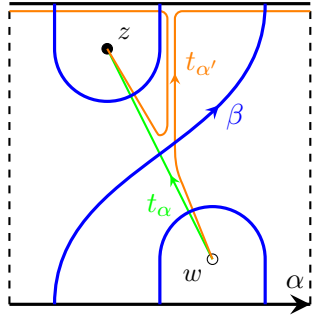
\begin{figure}[htbp]
		\centering
		\begin{tikzpicture}
			\def\y{2}
			\draw[very thick, decoration={markings, mark=at position 1 with {\arrow[scale=1.5]{>}}}, postaction={decorate}] (0,2*\y) -- (2*\y,2*\y)
			(0,0) -- (2*\y,0);
			\draw[thick,dashed] (0,0) -- (0,2*\y)
			(2*\y, 0) -- (2*\y,2*\y);
			\draw (1.9*\y, 0.15*\y) node {$ \alpha $};
			
			\draw[thick, green, decoration={markings, mark=at position 0.4 with \arrow{>}}, postaction={decorate}] (1.35*\y,0.3*\y) -- (0.65*\y,1.7*\y);
			\draw (1.35*\y,0.3*\y)circle(0.07) node[below left] {\small $ w $};
			\draw[fill=black] (0.65*\y,1.7*\y) circle(0.07) node[above right] {\small $ z $};
			\draw[green] (\y, 0.8*\y) node[below] {$ t_{\alpha} $};
			
			\draw[thick,orange,decoration={markings, mark=at position 0.25 with \arrow{>}}, postaction={decorate}] (1.35*\y,0.3*\y) to (1.15*\y, 0.8*\y)
			to[start angle=115, next angle=90] (1.1*\y,1.1*\y)
			-- (1.1*\y, 1.6*\y) node[right] {$ t_{\alpha'} $} -- (1.1*\y, 1.9*\y)
			to[next angle=0] (1.15*\y, 1.95*\y) -- (2*\y, 1.95*\y)
			(0, 1.95*\y) -- (\y, 1.95*\y)
			to[start angle=0, next angle=-90] (1.05*\y, 1.9*\y) -- (1.05*\y, 1.2*\y)
			to[next angle=-180] (\y, 1.125*\y)
			to[next angle=-230] (0.975*\y, 1.15*\y) -- (0.65*\y, 1.7*\y);
			
			\draw[very thick,blue, decoration={markings, mark=at position 0.3 with \arrow{>}}, postaction={decorate}] (0.3*\y, 0)
			to[start angle=90,next angle=90] (1.7*\y, 2*\y)
			(1.7*\y, 0) -- (1.7*\y, 0.3*\y)
			to[start angle=90,next angle=180] (1.35*\y, 0.65*\y)
			to[next angle=270] (\y, 0.3*\y) -- (\y, 0)
			(\y, 2*\y) -- (\y, 1.7*\y)
			to[start angle=-90,next angle=-180] (0.65*\y, 1.35*\y)
			to[next angle=-270] (0.3*\y, 1.7*\y) -- (0.3*\y, 2*\y);
			\draw[blue] (1.5*\y, 1.25*\y) node {$ \beta $};
		\end{tikzpicture}
		\caption{A Heegaard diagram compatible with the trefoil knot $ T_{2,3} $. The relator $ R $ derived from $ t_{\alpha} $ is $ \Xm Y X \Ym X Y $. The relator $ R' $ from $ t_{\alpha'} $ is $ \Xm\Ym Y YX \Ym YX Y $, which is the image of $ R $ under transformation $ l_1 $. It is further reduced to $ \Xm Y X^2 Y $.}
		\label{knot3_1}
	\end{figure}

	Since we require that $ t_{\alpha} $ is oriented from $ w $ to $ z $, the second transformation can be achieved by switching the two basepoints $ z $ and $ w $. Thus, the two relators represent the knot $ K $ and its mirror $ K^* $, respectively. Thereby, $ P_{R''}(t, q) = P_R(t^{-1}, q^{-1}) $ is directly obtained.
\end{proof}

\begin{example}
	A Heegaard diagram of the knot $ 10_{161} $ is illustrated in Figure \ref{fig:knot10_161} (as in \cite{GMM05}). The curve $ \beta $ gives the relator
	$$ R(X, Y) = X \Ym X \Ym \Xm Y \Xm \Ym X \Ym^2 X \Ym \Xm Y \Xm \Ym X \Ym X Y \Xm Y X Y \Xm Y. $$

	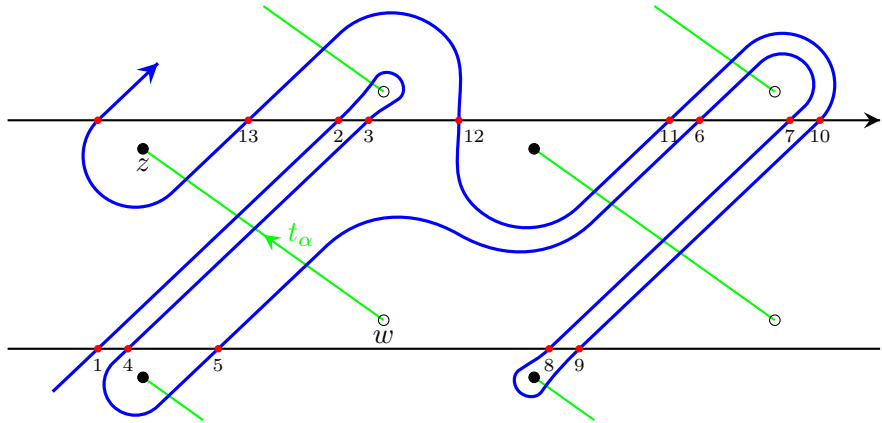
\begin{figure}[htbp]
		\centering
		\begin{tikzpicture}
			\def\x{4}
			\def\y{3.8}
			\draw[thick, decoration={markings, mark=at position 1 with {\arrow[scale=1.5]{>}}}, postaction={decorate},name path = alpha] (-0.3*\x,0) -- (2.6*\x,0)
			(-0.3*\x,0.8*\y) -- (2.6*\x,0.8*\y);
			
			\draw[thick, green, decoration={markings, mark=at position 0.5 with {\arrow[scale=1.5]{>}}}, postaction={decorate}]
			(0.95*\x,0.1*\y) -- (0.15*\x,0.7*\y);
			\draw[green] (0.675*\x,0.4*\y) node {$t_{\alpha}$};
			\draw[fill=black] (0.15*\x,0.7*\y) circle(0.07) node[below] {$z$};
			\draw (0.95*\x,0.1*\y) circle(0.07) node[below] {$w$};
			
			\draw[thick, green] (2.25*\x,0.1*\y) -- (1.45*\x,0.7*\y);
			\draw[fill=black] (1.45*\x,0.7*\y) circle(0.07);
			\draw (2.25*\x,0.1*\y) circle(0.07);
			
			\draw[thick, green] (0.15*\x,-0.1*\y) -- (0.35*\x,-0.25*\y);
			\draw[fill=black] (0.15*\x,-0.1*\y) circle(0.07);
			
			\draw[thick, green] (1.45*\x,-0.1*\y) -- (1.65*\x,-0.25*\y);
			\draw[fill=black] (1.45*\x,-0.1*\y) circle(0.07);
			
			\draw[thick, green] (0.95*\x,0.9*\y) -- (0.55*\x,1.2*\y);
			\draw (0.95*\x,0.9*\y) circle(0.07);
			
			\draw[thick, green] (2.25*\x,0.9*\y) -- (1.85*\x,1.2*\y);
			\draw (2.25*\x,0.9*\y) circle(0.07);
			\draw[very thick, blue, decoration={markings, mark=at position 1 with {\arrow[scale=1.5]{>}}}, postaction={decorate}, name path = beta] (-0.15*\x, -0.15*\y) -- (0.8*\x, 0.8*\y)
			to[start angle=45,next angle=55] (0.925*\x, 0.95*\y)
			to[next angle=-45] (\x, 0.95*\y)
			to[next angle=-145] (\x, 0.875*\y)
			to[next angle=-135] (0.9*\x, 0.8*\y) -- (0.05*\x, -0.05*\y)
			to[start angle=-135,next angle=-45] (0.05*\x, -0.2*\y)
			to[next angle=45] (0.2*\x, -0.2*\y) -- (0.75*\x, 0.35*\y)
			to[next angle=-30] (1.2*\x, 0.4*\y)
			to[next angle=45] (1.65*\x, 0.45*\y) -- (2.2*\x, \y)
			to[next angle=-45] (2.35*\x, \y)
			to[next angle=-135] (2.35*\x, 0.85*\y) -- (2.3*\x, 0.8*\y) -- (1.5*\x, 0*\y) 
			to[start angle=-135,next angle=-145] (1.4*\x, -0.075*\y)
			to[next angle=-45] (1.4*\x, -0.15*\y)
			to[next angle=55] (1.475*\x, -0.15*\y)
			to[next angle=45] (1.6*\x, 0*\y) -- (2.4*\x, 0.8*\y)
			to[start angle=45,next angle=135] (2.4*\x, 1.05*\y)
			to[next angle=225] (2.15*\x, 1.05*\y) -- (1.9*\x, 0.8*\y) -- (1.6*\x, 0.5*\y)
			to[start angle=-135,next angle=-225] (1.25*\x, 0.5*\y)
			to[next angle=90] (1.2*\x, 0.8*\y)
			to[next angle=135] (1.15*\x, 1.1*\y)
			to[next angle=225] (0.8*\x, 1.1*\y)
			-- (0.5*\x, 0.8*\y) -- (0.25*\x, 0.55*\y)
			to[start angle=-135,next angle=-225] (0, 0.55*\y)
			to[next angle=45] (0, 0.8*\y) -- (0.2*\x, \y);
			
			\fill [name intersections={of=beta and alpha, name=i, total=\t}] [red, every node/.style={above left, black, opacity=1}] \foreach \s in {1,...,\t}{(i-\s) circle (0.05)};
			\draw (0, 0) node[below] {\tiny $ 1 $};
			\draw (0.1*\x, 0) node[below] {\tiny $ 4 $};
			\draw (0.4*\x, 0) node[below] {\tiny $ 5 $};
			\draw (1.5*\x, 0) node[below] {\tiny $ 8 $};
			\draw (1.6*\x, 0) node[below] {\tiny $ 9 $};
			
			\draw (0.5*\x, 0.8*\y) node[below] {\tiny $ 13 $};
			\draw (0.8*\x, 0.8*\y) node[below] {\tiny $ 2 $};
			\draw (0.9*\x, 0.8*\y) node[below] {\tiny $ 3 $};
			\draw (1.25*\x, 0.8*\y) node[below] {\tiny $ 12 $};
			\draw (1.9*\x, 0.8*\y) node[below] {\tiny $ 11 $};
			\draw (2*\x, 0.8*\y) node[below] {\tiny $ 6 $};
			\draw (2.3*\x, 0.8*\y) node[below] {\tiny $ 7 $};
			\draw (2.4*\x, 0.8*\y) node[below] {\tiny $ 10 $};
		\end{tikzpicture}
		\caption{A Heegaard diagram of the knot $ 10_{161} $. This diagram is taken from \cite{GMM05}.}
		\label{fig:knot10_161}
	\end{figure}

	Label each $ X $-letter in order. Consider the primitive bigon $ D: X \Ym^2 X \Ym \Xm Y \Xm $ from $ x_5 $ to $ x_8 $ for instance. It contains only one primitive bigon $ D_1: X_6 \Ym \Xm_7 $ of height one, which is the second type in Table \ref{table:elementary_bigons}. The square domain $ S $ is the third type in Table \ref{table:b_diff_word}. Thus
	$$ P(D) = P(S) + P(D_1) = (0, 1) + (0, 1) = (0, 2). $$
	Removing primitive bigons with $ n_w = 0 $ yields
	$$ X_2 \Ym \ \Ym \ Y \ Y \ \Ym. $$
	So that we obtain $ M(x_2) = 0 $. Moreover, the Alexander and Maslov gradings of the generators are listed in Table \ref{table:knot_10_161}, and the Poincar\'e polynomial is
	$$ P_R(t,q) = t^{-3} + (1 + q)t^{-2} + 2qt^{-1} + 3q^2 + 2q^3t + (q^4 + q^5)t^2 + q^6t^3. $$
	\begin{table}[htbp]
	\centering
	\begin{tabular}{|c|c|c|c|c|c|c|c|c|c|c|c|c|c|}
		\hline
		& $ x_1 $ & $ x_2 $ & $ x_3 $ & $ x_4 $ & $ x_5 $ & $ x_6 $ & $ x_7 $ & $ x_8 $ & $ x_9 $ & $ x_{10} $ & $ x_{11} $ & $ x_{12} $ & $ x_{13} $ \\
		\hline
		$ F $ & -2 & -3 & -2 & -1 & 0 & 0 & 1 & 2 & 3 & 2 & 1 & 0 & -1  \\
		\hline
		$ M $ & 0 & 0 & 1 & 1 & 2 & 2 & 3 & 5 & 6 & 4 & 3 & 2 & 1  \\
		\hline
	\end{tabular}
	\vskip\baselineskip
	\caption{The Alexander and Maslov gradings of the knot Floer homology of knot $ 10_{161} $.}
	\label{table:knot_10_161}
	\end{table}
\end{example}

\begin{example}
	Figure \ref{fig:knotD6} shows a Heegaard diagram of the knot $ D_+(T_{2,3}, 6) $ (the $ 6 $-twisted positive Whitehead double of the right-handed trefoil), as in \cite{HO05}. The presentation is
	\begin{equation}\label{eq:pre_knotD6}
		\langle X, Y \,|\, X \Ym \Xm^3 Y X^3 Y \Xm^3 \Ym X^2 \Ym \Xm^3 Y X^3 Y \Xm^3 \Ym X^4 \rangle.
	\end{equation}

	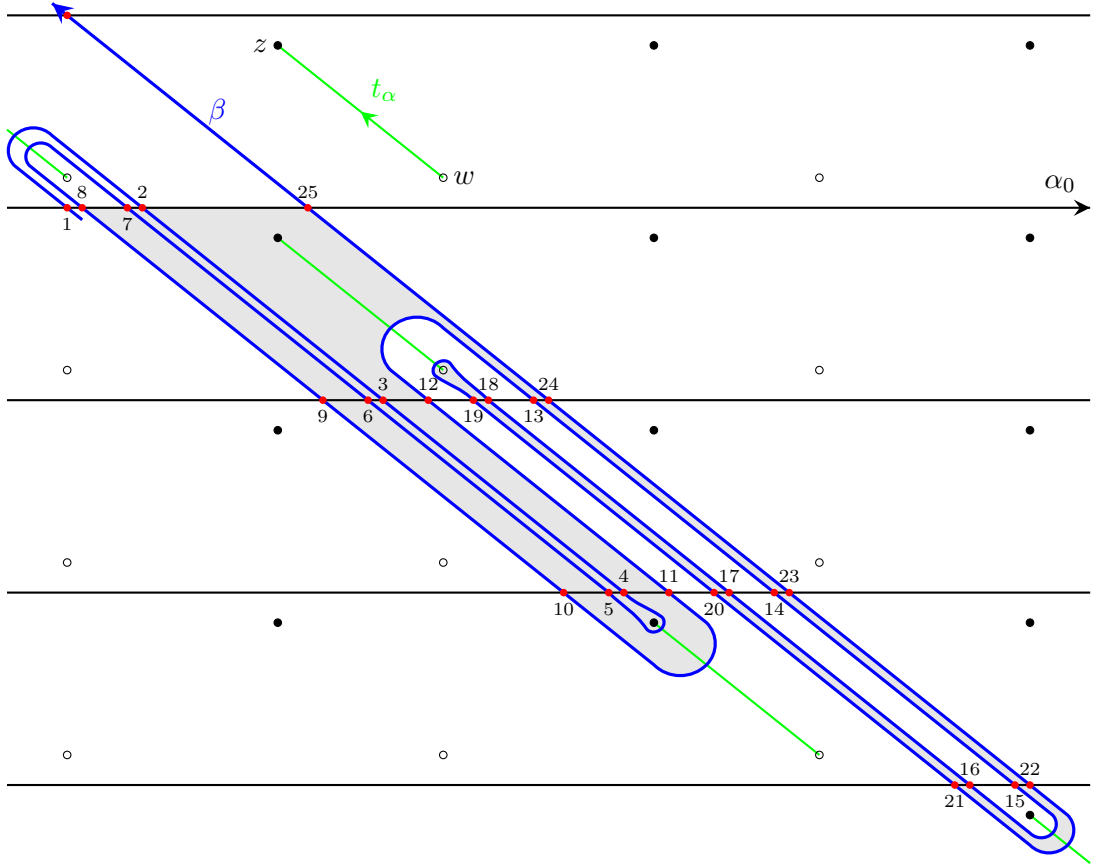
\begin{figure}[htbp]
		\centering
		\begin{tikzpicture}
			\def\x{2}
			\def\y{1.6}
			\draw[fill = gray!20] (0.1*\x, 0) -- (3.3*\x, -3.2*\y)
			-- (3.9*\x, -3.8*\y)
			to[start angle=-45, next angle=45] (4.25*\x, -3.8*\y)
			to[next angle=135] (4.25*\x, -3.45*\y) -- (4*\x, -3.2*\y)
			-- (2.4*\x, -1.6*\y) -- (2.15*\x, -1.35*\y)
			to[start angle=135, next angle=45] (2.15*\x, -\y) 
			to[next angle=-45] (2.5*\x, -\y) -- (3.1*\x, -1.6*\y)
			-- (6.3*\x, -4.8*\y) -- (6.55*\x, -5.05*\y)
			to[start angle=-45, next angle=-135] (6.55*\x,-5.2*\y) 
			to[next angle=-225] (6.4*\x, -5.2*\y) -- (6*\x, -4.8*\y)
			-- (2.8*\x, -1.6*\y) 
			to[start angle=135,next angle=125] (2.55*\x, -1.3*\y)
			to[next angle=225] (2.45*\x, -1.3*\y)
			to[next angle=325] (2.45*\x, -1.4*\y)
			to[next angle=315] (2.7*\x, -1.6*\y)
			-- (5.9*\x, -4.8*\y) -- (6.4*\x, -5.3*\y)
			to[start angle=-45, next angle=45] (6.65*\x, -5.3*\y)
			to[next angle=135] (6.65*\x, -5.05*\y) -- (6.4*\x, -4.8*\y)
			-- (1.6*\x, 0) -- (0.1*\x, 0);
			\draw[thick, decoration={markings, mark=at position 0.4 with {\arrow[scale=1.5]{>}}}, postaction={decorate},name path = alpha] (-0.4*\x,1.6*\y) -- (6.8*\x,1.6*\y)
			(-0.4*\x,0) -- (6.8*\x,0)
			(-0.4*\x,-1.6*\y) -- (6.8*\x,-1.6*\y)
			(-0.4*\x,-3.2*\y) -- (6.8*\x,-3.2*\y)
			(-0.4*\x,-4.8*\y) -- (6.8*\x,-4.8*\y);
			\draw (6.6*\x, 0.2*\y) node{$ \alpha_0 $};
			
			\begin{scope}
				\draw[thick, green, decoration={markings, mark=at position 0.5 with {\arrow[scale=1.5]{>}}}, postaction={decorate},] (2.5*\x,0.25*\y) -- (1.95*\x,0.8*\y) node[above right] {$ t_{\alpha} $} -- (1.4*\x,1.35*\y);
				\draw (2.5*\x,0.25*\y) circle(0.05) node[right] {$ w $};
				\draw[fill=black] (1.4*\x,1.35*\y) circle(0.05) node[left] {$ z $};
				
				\draw[thick, green] (2.5*\x,-1.35*\y) -- (1.4*\x,-0.25*\y);
				\draw (2.5*\x,-1.35*\y) circle(0.05);
				\draw[fill=black] (1.4*\x,-0.25*\y) circle(0.05);
				
				\draw (2.5*\x,-2.95*\y) circle(0.05);
				\draw[fill=black] (1.4*\x,-1.85*\y) circle(0.05);
				
				\draw (2.5*\x,-4.55*\y) circle(0.05);
				\draw[fill=black] (1.4*\x,-3.45*\y) circle(0.05);
			\end{scope}
			\begin{scope}[shift={(2.5*\x, 0)}]
				\draw (2.5*\x,0.25*\y) circle(0.05);
				\draw[fill=black] (1.4*\x,1.35*\y) circle(0.05);
				
				\draw (2.5*\x,-1.35*\y) circle(0.05);
				\draw[fill=black] (1.4*\x,-0.25*\y) circle(0.05);
				
				\draw (2.5*\x,-2.95*\y) circle(0.05);
				\draw[fill=black] (1.4*\x,-1.85*\y) circle(0.05);
				
				\draw[thick, green] (2.5*\x,-4.55*\y) -- (1.4*\x,-3.45*\y);
				\draw (2.5*\x,-4.55*\y) circle(0.05);
				\draw[fill=black] (1.4*\x,-3.45*\y) circle(0.05);
			\end{scope}
			
			\draw[thick, green] (0, 0.25*\y) -- (-0.4*\x,0.65*\y);
			\draw (0, 0.25*\y)circle(0.05);
			\draw (0, -1.35*\y)circle(0.05);
			\draw (0, -2.95*\y)circle(0.05);
			\draw (0, -4.55*\y)circle(0.05);
			
			\draw[thick, green] (6.4*\x,-5.05*\y) -- (6.8*\x,-5.45*\y);
			\draw[fill=black] (6.4*\x,-5.05*\y) circle(0.05);
			\draw[fill=black] (6.4*\x,-3.45*\y) circle(0.05);
			\draw[fill=black] (6.4*\x,-1.85*\y) circle(0.05);
			\draw[fill=black] (6.4*\x,-0.25*\y) circle(0.05);
			\draw[fill=black] (6.4*\x,1.35*\y) circle(0.05);
			
			\draw[very thick, blue, decoration={markings, mark=at position 1 with {\arrow[scale=1.5]{>}}}, postaction={decorate},name path = beta] (0.1*\x, -0.1*\y) -- (-0.35*\x, 0.35*\y)
			to[start angle=135,next angle=45] (-0.35*\x, 0.6*\y) 
			to[next angle=-45] (-0.1*\x, 0.6*\y) -- (0.5*\x, 0)
			-- (3.7*\x, -3.2*\y)
			to[start angle=-45,next angle=-35] (3.95*\x, -3.4*\y)
			to[next angle=-135] (3.95*\x,-3.5*\y)
			to[next angle=-235] (3.85*\x,-3.5*\y)
			to[next angle=-225] (3.6*\x, -3.2*\y)
			-- (0.4*\x, 0) -- (-0.1*\x, 0.5*\y) 
			to[start angle=135,next angle=225] (-0.25*\x,0.5*\y) 
			to[next angle=315] (-0.25*\x, 0.35*\y) -- (0.1*\x, 0)
			-- (3.3*\x, -3.2*\y) -- (3.9*\x, -3.8*\y)
			to[start angle=-45, next angle=45] (4.25*\x, -3.8*\y)
			to[next angle=135] (4.25*\x, -3.45*\y) -- (4*\x, -3.2*\y)
			-- (2.4*\x, -1.6*\y) -- (2.15*\x, -1.35*\y)
			to[start angle=135, next angle=45] (2.15*\x, -\y) 
			to[next angle=-45] (2.5*\x, -\y) -- (3.1*\x, -1.6*\y)
			-- (6.3*\x, -4.8*\y) -- (6.55*\x, -5.05*\y)
			to[start angle=-45, next angle=-135] (6.55*\x,-5.2*\y)
			to[next angle=-225] (6.4*\x, -5.2*\y) -- (6*\x, -4.8*\y)
			-- (2.8*\x, -1.6*\y) 
			to[start angle=135,next angle=125] (2.55*\x, -1.3*\y)
			to[next angle=225] (2.45*\x, -1.3*\y)
			to[next angle=325] (2.45*\x, -1.4*\y)
			to[next angle=315] (2.7*\x, -1.6*\y)
			-- (5.9*\x, -4.8*\y) -- (6.4*\x, -5.3*\y)
			to[start angle=-45, next angle=45] (6.65*\x, -5.3*\y)
			to[next angle=135] (6.65*\x, -5.05*\y) -- (6.4*\x, -4.8*\y)
			-- (-0.1*\x, 1.7*\y);
			\draw[blue] (\x, 0.8*\y) node{$ \beta $};
			
			\fill [name intersections={of=beta and alpha, name=i, total=\t}] [red, every node/.style={above left, black, opacity=1}] \foreach \s in {1,...,\t}{(i-\s) circle (0.05)};
			\draw (0, 0) node[below] {\tiny $ 1 $};
			\draw (0.5*\x, 0) node[above] {\tiny $ 2 $};
			\draw (0.4*\x, 0) node[below] {\tiny $ 7 $};
			\draw (0.1*\x, 0) node[above] {\tiny $ 8 $};
			\draw (1.6*\x, 0) node[above] {\tiny $ 25 $};
			
			\draw (2.1*\x, -1.6*\y) node[above] {\tiny $ 3 $};
			\draw (2*\x, -1.6*\y) node[below] {\tiny $ 6 $};
			\draw (1.7*\x, -1.6*\y) node[below] {\tiny $ 9 $};
			\draw (2.4*\x, -1.6*\y) node[above] {\tiny $ 12 $};
			\draw (3.1*\x, -1.6*\y) node[below] {\tiny $ 13 $};
			\draw (2.8*\x, -1.6*\y) node[above] {\tiny $ 18 $};
			\draw (2.7*\x, -1.6*\y) node[below] {\tiny $ 19 $};
			\draw (3.2*\x, -1.6*\y) node[above] {\tiny $ 24 $};
			
			\draw (3.7*\x, -3.2*\y) node[above] {\tiny $ 4 $};
			\draw (3.6*\x, -3.2*\y) node[below] {\tiny $ 5 $};
			\draw (3.3*\x, -3.2*\y) node[below] {\tiny $ 10 $};
			\draw (4*\x, -3.2*\y) node[above] {\tiny $ 11 $};
			\draw (4.7*\x, -3.2*\y) node[below] {\tiny $ 14 $};
			\draw (4.4*\x, -3.2*\y) node[above] {\tiny $ 17 $};
			\draw (4.3*\x, -3.2*\y) node[below] {\tiny $ 20 $};
			\draw (4.8*\x, -3.2*\y) node[above] {\tiny $ 23 $};
			
			\draw (6.3*\x, -4.8*\y) node[below] {\tiny $ 15 $};
			\draw (6*\x, -4.8*\y) node[above] {\tiny $ 16 $};
			\draw (5.9*\x, -4.8*\y) node[below] {\tiny $ 21 $};
			\draw (6.4*\x, -4.8*\y) node[above] {\tiny $ 22 $};
		\end{tikzpicture}
		\caption{A Heegaard diagram of the knot $ D_+(T_{2,3}, 6) $ (taken from \cite{HO05}).}
		\label{fig:knotD6}
	\end{figure}

	\noindent Label each $ X $-letter in order. The corresponding word of the shaded domain $ D $ in Figure \ref{fig:knotD6} is
	$$ \Xm^3 \Ym X^2 \Ym \Xm^3 Y X^3 Y \Xm^3 \Ym X^4, $$
	which is a primitive bigon from $ x_{8} $ to $ x_{25} $. Determine $ P(D) $ directly from the word as follows: two height $ -1 $ points $ x_{9} $ and $ x_{24} $ are connected by elementary bigons:
	$$ \begin{aligned}
		D_1: ~~ & \Xm^2 \Ym X^2, \quad & (x_{9} \rightarrow x_{12}); \\
		D_2: ~~ & X \Ym \Xm, & (x_{12} \rightarrow x_{13} ); \\
		D_3: ~~ & \Xm^3 Y X^3, & (x_{13} \rightarrow x_{18} ); \\
		D_4: ~~ & X Y \Xm, & (x_{18} \rightarrow x_{19} );  \\
		D_5: ~~ & \Xm^3 \Ym X^3, & (x_{19} \rightarrow x_{24} ).
	\end{aligned} $$
	Note that $ D_1 $ and $ D_5 $ have the same orientation as $ D $, thus
	$$ \begin{aligned}
		P(D) & = P(S) + \sum_{i=1}^{5} P(D_i) \\
			& = (-1, -1) + (-1, 0) + (0, 1) + (1, 0) + (0, -1) + (-1, 0) \\
			& = (-2, -1),
		\end{aligned} $$
	which is consistent with the diagram shown in Figure \ref{fig:knotD6}.

	The following steps are to find the unique $ X $-letter that generates $ \widehat{HF}(S^3, w) $:
	\begin{enumerate}
		\item Remove primitive bigons with $ n_w = 0 $, the resulting relator is
		$$ X_1 \Ym \ Y \Xm_8 \ \Ym \ Y \ X_{25}. $$
		\item Note that the points $ x_1 $ and $ x_8 $ are connected by three primitive bigons in the original relator $ R $:
		$$ \begin{aligned}
				E_1: ~~ & X \Ym \Xm, \quad & (x_{1} \rightarrow x_{2}); \\
				E_2: ~~ & \Xm^3 Y X^3, & (x_{2} \rightarrow x_{7} ); \\
				E_3: ~~ & X Y \Xm, & (x_{7} \rightarrow x_{8} ).
		\end{aligned} $$
		So that the $ w $-grading shift is
		$$ w(x_1) - w(x_8) = w(x_1) - w(x_2) + w(x_2) - w(x_7) + w(x_7) - w(x_8) = 1 + 0 - 1 = 0. $$
		On the other hand, $ w(x_8) - w(x_{25}) = -1 $, since the two points $ x_8 $ and $ x_{25} $ are connected by the bigon $ D $ in above. Therefore, the subword $ X_1 \Ym \ Y \Xm_8 $ is removed in the second step, thus giving
		$$ \Ym \ Y \ X_{25}. $$
		\item We obtain $ M(x_{25}) = 0 $.
		\end{enumerate}
	The Poincar\'e polynomial is
	\begin{equation}\label{eq:pp_knotD6}
		P_R(t, q) = (4q^{-1} + 2q)t^{-1} + (4q^2 + 9) + (2q^3 + 4q)t.
	\end{equation}
\end{example}

\clearpage
\section{$ (1,1) $ knots in lens spaces}\label{sec:lens}
In this section, we extend the algorithm for $ (1,1) $ knots in lens spaces, which are automatically rationally null-homologous. See \cite{OS11} for details. 

Let $ K $ be a $ (1,1) $ knot in the lens space $ L(p, q) (p>0) $ and $ \class{X, Y\,|\,R(X, Y)}$ be a presentation of $\pi_1(L(p,q)\setminus K)$ obtained from a $ (1,1) $ Heegaard diagram $ (T^2, \alpha, \beta, z, w) $ of $ (L(p,q),K) $, satisfying
\begin{enumerate}
\item The curves $ \alpha $ and $ \beta $ are oriented so that
$$ \# \{X \,|\, X \in R(X,Y)\} - \#\{\Xm \,|\, \Xm \in R(X,Y)\} = p;$$
\item The relator $R(X,Y)$ is cyclically reduced.
\end{enumerate}

For a knot $K$ in $L(p,q)$, the knot Floer homology $\widehat{HFK}(L(p,q),K)$ can be decomposed as a directe sum
$$ \widehat{HFK}(L(p,q),K) = \bigoplus_{\mathfrak{s} \in {\rm Spin}^c(L(p,q))} \widehat{HFK}(L(p.q),K;\mathfrak{s}) $$
with respect to ${\rm Spin}^c$ structures on $L(p,q)$. The Alexander grading and the Maslov grading are defined for each direct summand $\widehat{HFK}(L(p.q),K;\mathfrak{s})$.
Note that there is an affine isomorphism between the space of $ {\rm Spin}^c $ structures over a 3-manifold and its second cohomology. In particular, 
$$ {\rm Spin}^c(L(p,q)) \cong H^2(L(p,q), \mathbb{Z}) \cong \mathbb{Z}/p\mathbb{Z}. $$
Let $ {\rm Spin}^c(L(p,q)) = \{ \mathfrak{s}_0, \cdots, \mathfrak{s}_{p-1} \} $. The set of relative $ {\rm Spin}^c $ structures for $ (L(p, q), K) $ is denoted by
$$  \underline{{\rm Spin}^c} (L(p, q), K) \cong {\rm Spin}^c(L(p,q))  \times \mathbb{Z} = \{ \mathfrak{(s}_i,s) \,|\,  i = 0, \cdots, p-1, \ s \in \mathbb{Z} \}. $$
For each $ \mathfrak{s}_i \in {\rm Spin}^c(L(p,q)) $,
$$ \begin{aligned}
	\widehat{HFK}(L(p,q), K; \mathfrak{s}_i) & = \bigoplus_{\small \{ \underline{\mathfrak{t}} \in \underline{{\rm Spin}^c} (L(p, q), K) \,|\, \underline{\mathfrak{t}}  \text{ extends } \mathfrak{s}_i \}} \widehat{HFK}(L(p,q), K; \underline{\mathfrak{t}}) \\
	& = \bigoplus_{s \in \mathbb{Z}} \widehat{HFK}(L(p,q), K; (\mathfrak{s}_i,s)). 
\end{aligned}$$
Note that $ \widehat{HF}(L(p,q); \mathfrak{s}_i) \cong \mathbb{Z} $. More generally, a rational homology sphere $Y$ is called an \emph{L-space} if $ \widehat{HF}(Y; \mathfrak{s}) \cong \mathbb{Z} $ for all $ \mathfrak{s} \in {\rm Spin}^c(Y) $. 
Ignoring the basepoint $z$, the Heegaard diagram $ (T^2, \alpha, \beta, w) $ specifies the lens space $ L(p,q) $, so that there exist exactly $ p $ intersection points that generates $\widehat{HF}(L(p,q);\mathfrak{s}_i) $, one for each $\mathfrak{s}_i$. Denoted by $ x_{\mathfrak{s}_i}$ the intersection point generating $\widehat{HF}(L(p,q);\mathfrak{s}_i) $ and by $ X_{\mathfrak{s}_i} $ the corresponding $X$-letter in the relator $ R $. With $w$ fixed, each intersection point $x$ of $\alpha$ and $\beta$ specify a $ {\rm Spin}^c$ structure $\mathfrak{s}_w(x)$ of $L(p,q)$ \cite{OS01a}. Let $ \mathfrak{S} $ be the set of intersection points of $ \alpha $ and $ \beta $ (i.e., the set of $X$-letters in $ R $), and
$$ \mathfrak{S}_i := \{ x \in \mathfrak{S} \,|\, \mathfrak{s}_w (x) = \mathfrak{s}_i  \}, \ i = 0, \cdots, p-1. $$
For each $ \mathfrak{s}_i$, $\widehat{CF}(L(p, q); \mathfrak{s}_i) $ and $ \widehat{CFK}(L(p, q), K; \mathfrak{s}_i) $ are both generated by $ \mathfrak{S}_i $.

The key of Algorithm \ref{algo:main} is to determine the number of basepoints in each primitive bigon, and the first three steps works for (1,1) knots in lens spaces as well. 
In Step \ref{step_absolute}, we got the unique generator of $ \widehat{HF}(S^3) $ by reducing the relator for (1,1) knots in $S^3$. For $(1,1)$ knots in $L(p,q)$, the same process give $ p $ generators $X_{\mathfrak{s}_0},\cdots,X_{\mathfrak{s}_{p-1}}$ in different $ {\rm Spin}^c $ structures. Therefore, we can apply Step \ref{step_absolute} to each $ {\rm Spin}^c $ structure separately. We only need to determine the $ {\rm Spin}^c $ structure for each intersection point, that is, determine $ \mathfrak{S}_i $ from the relator $ R(X, Y) $.  

\begin{lemma}\label{lemma:bigon_to_spinc}
	If two $X$-letters are connected by a sequence of primitive bigons, then they correspond to the same $ {\rm Spin}^c $ structure of $ L(p, q) $.
\end{lemma}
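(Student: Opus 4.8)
The plan is to reduce the statement to the fundamental fact from Heegaard Floer theory that two intersection points determine the same ${\rm Spin}^c$ structure precisely when they are joined by a Whitney disk. Since the diagram has genus one, ${\rm Sym}^1(T^2) = T^2$, so a Whitney disk connecting two generators is literally a map of a disk into $T^2$ whose boundary alternates between an arc of $\alpha$ and an arc of $\beta$, i.e. a bigon in the sense used throughout this section. Thus a primitive bigon is exactly the geometric object that witnesses $\pi_2(x_i,x_j)\neq\emptyset$, and the whole lemma should come down to feeding this into the $H_1$-measurement of $\mathfrak{s}_w$.

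First I would recall how $\mathfrak{s}_w$ records $H_1$-data. Given two $X$-letters $x$ and $y$, choose an arc $a\subset\alpha$ and an arc $b\subset\beta$ both running from $x$ to $y$; the closed loop $\gamma_{x,y}=a\cup(-b)$ represents a class $\epsilon(x,y)\in H_1(T^2)$ whose image in $H_1(L(p,q))\cong\mathbb{Z}/p\mathbb{Z}$ equals ${\rm PD}(\mathfrak{s}_w(x)-\mathfrak{s}_w(y))$. In particular $\mathfrak{s}_w(x)=\mathfrak{s}_w(y)$ as soon as $\gamma_{x,y}$ is null-homologous in $L(p,q)$. This is independent of the two chosen arcs, since replacing $a$ (resp. $b$) changes $\gamma_{x,y}$ by a multiple of $[\alpha]$ (resp. $[\beta]$), and both of these classes vanish in $H_1(L(p,q))$ because $\alpha$ and $\beta$ bound disks in $V_\alpha$ and $V_\beta$ respectively.

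Next I would insert a primitive bigon into this formula. Let $D$ be a primitive bigon connecting $x_i$ and $x_j$; projecting its lift from $\mathbb{C}$ to $T^2$ yields a map of a disk into $T^2$ whose boundary consists of an $\alpha$-arc $a$ and a $\beta$-arc $b$ from $x_i$ to $x_j$. Taking these as the arcs above, the loop $\gamma_{x_i,x_j}=a\cup(-b)$ is precisely $\partial D$, hence null-homotopic, a fortiori null-homologous, already in $T^2$. Therefore $\epsilon(x_i,x_j)=0$ and $\mathfrak{s}_w(x_i)=\mathfrak{s}_w(x_j)$; equivalently, $D$ descends to an element of $\pi_2(x_i,x_j)$, forcing the two generators into a common ${\rm Spin}^c$ structure. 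The basepoints $z$ and $w$ possibly enclosed by $D$ are irrelevant here, since $\mathfrak{s}_w$ depends only on the existence of a connecting disk, not on its multiplicities $n_z(D)$ or $n_w(D)$.

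Finally, chaining these equalities along the given sequence of primitive bigons yields the conclusion by transitivity. I do not anticipate a genuine obstacle: the only points needing care are the well-definedness of the $H_1$-difference in $L(p,q)$ and its insensitivity to the basepoints, both of which are standard, while the remaining input is simply that the boundary of a disk is null-homologous.
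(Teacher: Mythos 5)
Your proof is correct and follows essentially the same route as the paper: the paper likewise observes that a primitive bigon forces $\varepsilon(x_{d_i},x_{d_{i+1}})=0$ and then invokes \cite[Lemma 2.19]{OS01a} to conclude $\mathfrak{s}_w(x_{d_{i+1}})-\mathfrak{s}_w(x_{d_i})={\rm PD}[\varepsilon(x_{d_i},x_{d_{i+1}})]=0$, chaining along the sequence. Your additional remarks on well-definedness of the $H_1$-difference and the irrelevance of $n_z$, $n_w$ are correct elaborations of what the paper leaves to the citation.
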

\begin{proof}
	Suppose there is a sequence $ X_{d_1}, \cdots, X_{d_n} $ of $X$-letters such that 
	any two adjacent $ X_{d_i} $ and $ X_{d_{i+1}} $ are connected by a primitive bigon $ D_i, i = 1, \cdots, n-1 $. The difference $ \varepsilon (x_{d_i}, x_{d_{i+1}}) = 0 $ (see \cite[Section 2.4]{OS01a}) since $ D_i $ is a primitive bigon. By \cite[Lemma 2.19]{OS01a},
	$$ \mathfrak{s}_w (x_{d_{i+1}}) - \mathfrak{s}_w (x_{d_{i}}) = {\rm PD} [\varepsilon (x_{d_i}, x_{d_{i+1}})] = 0. $$
	Therefore, all $ X_{d_i}, i = 1, \cdots, n $ correspond to the same $ {\rm Spin}^c $ structure.
\end{proof}

\begin{lemma}
Each $ X $-letter in $ R(X, Y) $ is connected to some $ X_{\mathfrak{s}_i}$ ($0\leqslant i<p$) via a sequence of primitive bigons.
\end{lemma}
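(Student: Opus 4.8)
\emph{The plan.} I would first show that the statement is equivalent to the connectivity of each ${\rm Spin}^c$ class under primitive bigons. By Lemma \ref{lemma:bigon_to_spinc}, any two $X$-letters joined by a sequence of primitive bigons lie in the same $\mathfrak{s}_i$, so the relation ``joined by a sequence of primitive bigons'' partitions each $\mathfrak{S}_i$ into one or more classes. The reduction of Step \ref{step_absolute} (forgetting $z$) only cancels pairs across primitive bigons with $n_w=0$, hence preserves the ${\rm Spin}^c$ decomposition; since $L(p,q)$ is an L-space, $\widehat{HF}(L(p,q);\mathfrak{s}_i)\cong\mathbb{Z}$, so this process leaves exactly one survivor $X_{\mathfrak{s}_i}$ in each $\mathfrak{S}_i$. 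Therefore it suffices to prove that each $\mathfrak{S}_i$ is a \emph{single} primitive-bigon class: the survivor $X_{\mathfrak{s}_i}$ then lies in that class, and every $X$-letter of $\mathfrak{S}_i$ is joined to it.

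Next I would record the homological consequence of a hypothetical disconnection. Recall from the discussion following Definition \ref{def:primitive_bigon} that every bigon in $\mathbb{C}$ decomposes into primitive bigons running through its consecutive intersection points; consequently, two $X$-letters lying in different primitive-bigon classes are joined by no Whitney bigon at all. Forgetting $z$, the differential on $\widehat{CF}(L(p,q);\mathfrak{s}_i)$ counts only $n_w=0$ bigons and so respects this partition, yielding a splitting $\widehat{CF}(L(p,q);\mathfrak{s}_i)=\bigoplus_{C}CF_C$ indexed by the primitive-bigon classes $C\subseteq\mathfrak{S}_i$. Passing to homology and using $\operatorname{rank}\widehat{HF}(L(p,q);\mathfrak{s}_i)=1$, exactly one class carries the homology. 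The statement is thus reduced to showing that no other class is nonempty, i.e.\ that a nonempty primitive-bigon class cannot be acyclic for the $n_w=0$ differential.

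A one-point class has homology $\mathbb{Z}$, so the only danger is a class $C$ with $|C|\ge 2$ that cancels completely under the reduction. I would exclude this by the covering-translation overlap technique already used in Lemmas \ref{lemma:point_smaller1}, \ref{lemma:one_height_one}, \ref{lemma:b_same_word} and in the verification following Step \ref{step_absolute}: run the reduction inside such a $C$ until only its last cancelling pair $\{a,b\}$ remains; since $C$ has no bigon to any other class, at this stage $\{a,b\}$ is bounded by a primitive bigon $\widetilde{D}$ and is isolated from the rest of the diagram. Because $R$ is cyclically reduced, $\widetilde{D}$ contains a basepoint, and lifting to $\mathbb{C}$ I would produce a covering translation $\Gamma$ pushing $\Gamma(\widetilde{\beta})$ into $\widetilde{\beta}$, contradicting the disjointness of distinct lifts of $\beta$. \textbf{The main obstacle is exactly this last step}: unlike the $S^3$ computation of Step \ref{step_absolute}, where the surviving bigon has $n_w\neq 0$, here $\widetilde{D}$ has $n_w=0$ and contains $z$, so the overlap argument must be run with the roles of $z$ and $w$ interchanged, tracking the Alexander (equivalently $w$-) grading to locate the basepoint inside $\widetilde{D}$ that forces $\Gamma(\widetilde{\beta})\cap\widetilde{\beta}\neq\emptyset$. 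The reduction to connectivity in the first paragraph and the homological bookkeeping in the second are formal; the geometric input of this overlap argument is what does the real work.
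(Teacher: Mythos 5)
Your reduction in the first two paragraphs is sound as far as it goes: by Lemma \ref{lemma:bigon_to_spinc} the primitive-bigon classes refine the partition $\mathfrak{S}_0,\dots,\mathfrak{S}_{p-1}$, the $n_w=0$ differential respects the classes, and ${\rm rank}\,\widehat{HF}(L(p,q);\mathfrak{s}_i)=1$ shows that at most one class inside each $\mathfrak{S}_i$ can carry homology. But this places the entire content of the lemma into your third paragraph --- that no nonempty class is acyclic --- and there you only describe what ``must be'' done (run the overlap argument with the roles of $z$ and $w$ interchanged, tracking the $w$-grading) without doing it; you say yourself that this is the main obstacle. That is a genuine gap, not a routine verification: the overlap arguments of Lemmas \ref{lemma:point_smaller1}--\ref{lemma:b_same_word} and of the paragraph following Algorithm \ref{algo:main} are all run inside a fixed lift $\widetilde{\beta}$ between a fixed pair of $\alpha$-lifts, whereas your hypothetical ``last cancelling pair'' lives in an isotoped diagram whose bigons are no longer primitive bigons of $R$, and you produce neither the covering translation $\Gamma$ nor the basepoint configuration that would force $\Gamma(\widetilde{\beta})\cap\widetilde{\beta}\neq\emptyset$. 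As submitted, the proof does not close.

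The missing idea is the one the paper's proof is built on: the height function. If two $X$-letters have heights congruent modulo $p$, then after passing to $R^{u+1}$ suitable lifts of them lie on a common lift of $\alpha$, and the consecutive intersection points of $\widetilde{\beta}$ with that lift chain them together by primitive bigons; combined with Lemma \ref{lemma:bigon_to_spinc} and the fact that there is exactly one survivor $X_{\mathfrak{s}_i}$ in each ${\rm Spin}^c$ structure, this forces $h(X_{\mathfrak{s}_0}),\dots,h(X_{\mathfrak{s}_{p-1}})$ to be pairwise distinct modulo $p$, hence to realize every residue, so an arbitrary $X$-letter (of height $0$ after cyclic rotation) connects to the survivor whose height is $\equiv 0\pmod{p}$. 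This same observation would also rescue your homological route --- it shows the primitive-bigon classes are exactly the $p$ residue classes of the height, so none is empty and none can be acyclic --- but some such concrete input is required, and your write-up supplies none.
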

\begin{proof}
	Without loss of generality, we write the relator $ R $ so that our consideration $ X $-letter is the first letter $ X_1 $ in $ R $.  For each $ X_{\mathfrak{s}_i} \in R $, denote its height (relative to $ X_1 $) by $ h_i $. Claim that $ h_i \not \equiv h_j \pmod{p} $ if $ i \neq j $.
	
	Suppose there exist $ i $ and $ j $ such that $ h_i  \equiv h_j \pmod{p} $, i.e., $ h_i - h_j = pu $ for some integer $ u $ (assume without loss of generality that $ u \geq 0 $). Pick the word $ R^{u+1} $, the $ u+1 $ copies of $ R $, which corresponds to a subarc of a lift of the $ \beta $ curve. Let $ X_k^{(l)} $ denote the $ k $-th $ X $ letter in the $ l $-th copy of $ R $. By assumption, the height of $ X_{\mathfrak{s}_j}^{(u+1)} $ is
	$$ h(X_{\mathfrak{s}_j}^{(u+1)}) = pu + h(X_{\mathfrak{s}_j}) = h_i = h(X_{\mathfrak{s}_i}) . $$
	Thus $ X_{\mathfrak{s}_i} $ and $ X_{\mathfrak{s}_j}^{(u+1)} $ lie in the same lift of the $ \alpha $ curve in the universal cover $\mathbb{C} $. Denoted by
	$ X_{d_1}, X_{d_2}, \cdots, X_{d_n} $ all the $ X $-letters between $ X_{\mathfrak{s}_i} $ and $ X_{\mathfrak{s}_j}^{(u+1)} $ in $ R^{u+1} $ that has height $ h_i $, then they give a sequence of primitive bigons from $ X_{\mathfrak{s}_i} $ to $ X_{\mathfrak{s}_j}^{(u+1)} $. Thus $ s_w (X_{\mathfrak{s}_i}) = s_w(X_{\mathfrak{s}_j}) $ by Lemma \ref{lemma:bigon_to_spinc}, so $ i = j $.
	
	In particular, there is an $ h_i \equiv 0 \pmod{p} $. Suppose $ h_i = p v $ for some integer $ v $. Note that $ h(X_1) = 0 $. The same argument as above gives a sequence of primitive bigons connecting $ X_1 $ and $ X_{\mathfrak{s}_i}^{(v+1)} $.
\end{proof}

It follows that we can divide the set $\mathfrak{S}$ of generators into $p$ disjoint subsets directly from the relator $ R $:
\begin{cor}
	$ X_k \in \mathfrak{S}_i $ if and only if $ X_k $ and $ X_{\mathfrak{s}_i} $ are connected by a sequence of primitive bigons, that is,
\begin{equation}\label{eq:dividegenerators}
		\mathfrak{S}_i = \{ X_k \in R(X, Y) \,|\, \text{ $ X_k $ and $ X_{\mathfrak{s}_i} $ are connected by primitive bigons} \}. 
	\end{equation}
\end{cor}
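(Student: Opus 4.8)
The plan is to deduce this corollary directly from Lemma~\ref{lemma:bigon_to_spinc} together with the preceding lemma; essentially all the substantive content has already been established, so what remains is to assemble the two statements into the claimed equivalence. The one auxiliary fact I would invoke is that the $p$ distinguished points $X_{\mathfrak{s}_0}, \dots, X_{\mathfrak{s}_{p-1}}$ represent pairwise distinct $ {\rm Spin}^c $ structures, which holds because $L(p,q)$ is an L-space and the $\mathfrak{s}_i$ exhaust $ {\rm Spin}^c(L(p,q)) \cong \mathbb{Z}/p\mathbb{Z} $.

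First I would treat the ``if'' direction. Assuming $X_k$ and $X_{\mathfrak{s}_i}$ are joined by a sequence of primitive bigons, Lemma~\ref{lemma:bigon_to_spinc} applies directly to give $\mathfrak{s}_w(X_k) = \mathfrak{s}_w(X_{\mathfrak{s}_i}) = \mathfrak{s}_i$, whence $X_k \in \mathfrak{S}_i$ by the definition of $\mathfrak{S}_i$.

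For the ``only if'' direction, I would start from $X_k \in \mathfrak{S}_i$, i.e.\ $\mathfrak{s}_w(X_k) = \mathfrak{s}_i$. The preceding lemma supplies a sequence of primitive bigons connecting $X_k$ to some distinguished point $X_{\mathfrak{s}_j}$. Applying Lemma~\ref{lemma:bigon_to_spinc} along this chain yields $\mathfrak{s}_j = \mathfrak{s}_w(X_{\mathfrak{s}_j}) = \mathfrak{s}_w(X_k) = \mathfrak{s}_i$; since the distinguished $ {\rm Spin}^c $ structures are distinct, this forces $j=i$, so $X_k$ is in fact connected to $X_{\mathfrak{s}_i}$ itself, as required.

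I do not anticipate any real obstacle here: the hard work---that primitive bigons preserve the $ {\rm Spin}^c $ structure, and that every generator can be routed to a distinguished one---is exactly the content of the two lemmas, so the corollary is a formal combination of them. The only point to be careful about is the distinctness of the $\mathfrak{s}_i$, which is immediate from the construction of the $X_{\mathfrak{s}_i}$ as the generators of $\widehat{HF}(L(p,q);\mathfrak{s}_i)$ ranging over all $p$ $ {\rm Spin}^c $ structures.
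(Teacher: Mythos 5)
Your proposal is correct and matches the paper's intent: the paper states this corollary without a separate proof, treating it as the immediate formal combination of Lemma \ref{lemma:bigon_to_spinc} and the preceding lemma, which is exactly what you carry out. Your one auxiliary observation — that the $X_{\mathfrak{s}_i}$ lie in pairwise distinct ${\rm Spin}^c$ structures — is indeed built into their definition (one generator of $\widehat{HF}(L(p,q);\mathfrak{s}_i)$ for each $\mathfrak{s}_i$), so there is no gap.
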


Now we modify Algorithm \ref{algo:main} for $ (1,1) $ knots in lens spaces.

\begin{algo}\label{algo:lensspace}
	Let $K$ be a $(1,1)$ knot in the lens space $ L(p, q) $ and $ \langle X, Y \,|\, R(X, Y) \rangle $ be a presentation of $\pi_1(L(p, q) \setminus K)$ from a $(1,1)$ Heegaard diagram. Let $ \mathfrak{S} $ be the set of  letters $ X $ and $ \Xm $ in the relator $ R $. The chain complex $\widehat{CFK}(L(p, q),K)$ has generators corresponding to elements in $ \mathfrak{S} $ and vanishing differentials. The first two steps (Step \ref{step_bigon} and \ref{step_basepoint}) of determining Alexander and Maslov gradings are the same as Algorithm \ref{algo:main}.
	\renewcommand{\theenumi}{\Roman{enumi}}
	\begin{enumerate}
		\item[(III')] We apply the process of reducing the relator (Step \ref{step_absolute}) first. The resulting relator $ R''(X, Y) $ contains exactly $ p $ $ X $-letters $ X_{\mathfrak{s}_0}, \cdots,X_{\mathfrak{s}_{p-1}}$. We divide $ \mathfrak{S} $ into $ p $ disjoint subsets $ \mathfrak{S}_0, \cdots, \mathfrak{S}_{p-1} $ by \eqref{eq:dividegenerators}. $\widehat{HFK}(L(p, q), K; \mathfrak{s}_i) $ is then generated by $ \mathfrak{S}_i $.
		
		\item[(IV')] Apply Step \ref{step_relative} and \ref{step_absolute}  of Algorithm \ref{algo:main} for each set $ \mathfrak{S}_i $ to obtain the Alexander grading and the Maslov grading of the homology $ \widehat{HFK}(L(p,q), K; \mathfrak{s}_i) $, respectively.
	\end{enumerate}
\end{algo}

\clearpage
\section{On general presentation of the fundamental group}\label{sec:alexander}

In previous sections, we always assume that the group presentation comes from a $ (1,1) $ Heegaard diagram. This ensures that the curve $\beta$ and its image in the covering transformation do not intersect, a fact that we use several times when determining the number of basepoints contained in each primitive bigon. However, Algorithm \ref{algo:main} can compute the knot Floer homology of a (1,1) knot $K$ from a presentation of $\pi_1(S^3\setminus K)$ which does not come from a (1,1) Heegaard diagram (Example \ref{example:not_geometric_but_homology}). Indeed, Algorithm \ref{algo:main} itself does not a priori require the group to be the fundamental group of a knot complement.


\begin{mydef}
Let $ P=\langle X, Y \,|\, R(X, Y) \rangle $ be a presentation of a group $G$ with $R(X,Y)$ cyclically reduced. $P$ is \emph{quasi-geometric} if
\begin{enumerate}
\item $ \# \{X \,|\, X \in R(X,Y)\} - \#\{\Xm \,|\, \Xm \in R(X,Y)\} = +1 $;
\item all subwords of the form $ X Y^k \Xm $ (or $\Xm Y^k X$) must have $ |k| = 1 $;
\item there are no two subwords of the form $ X Y^l X $ (or $ \Xm \Ym^l \Xm $) and $ X Y^{l'} X $ (or $ \Xm \Ym^{l'} \Xm $) that satisfy $\abs{l-l'} > 1 $, where $ l, l' \in \bbz $.
\end{enumerate}
\end{mydef}

For a quasi-geometric presentation, Algorithm \ref{algo:main} (Step 1-3) can be applied: the primitive disk words can be enumerated; their orientations can be determined by counting the positive and negative elementary words that it contains; and the functions $ n_z $ and $ n_w $ on primitive disk words can be formally computed. Hence we get a relatively bigraded chain complex with trivial differential.

For a quasi-geometric presentation of the fundamental group of a $(1,1)$ knot $K$, the homology of the above chain complex may not be isomorphic to the knot Floer homology $\widehat{HFK}(S^3,K)$. For example, the following presentation
\begin{equation}\label{eqn:trefoil_presentation}
\langle X, Y\,|\, YX^3 Y \Xm \Ym \Xm \rangle
\end{equation}
of the fundamental group of the trefoil knot $T_{2,3}$ is quasi-geometric. However, the homology of \eqref{eqn:trefoil_presentation} from Algorithm \ref{algo:main} (Step 1-3) has rank 5, while the rank of its knot Floer homology is $3$. However, we will show that its Euler characteristic agrees with the (unnormalized) Alexander polynomial.

The Alexander polynomial of a knot $K$ can be computed from a presentation of $ \pi_1(S^3 \setminus K) $ as follows \cite{Rol76}:
\begin{enumerate}
	\item For a knot $ K $ in $ S^3 $, the knot group $ G \triangleq \pi_1(S^3 \setminus K) $ has a presentation of the form
	$$ \langle Y, g_1, \cdots, g_p \,|\, r_1, \cdots, r_p \rangle, $$
	where $ Y \mapsto 1 $ and $ g_i \mapsto 0 $ under the abelianization $ G \rightarrow G/[G: G] \cong \mathbb{Z} $.
	
	\item The commutator subgroup $ C = [G: G] $ of $ G $ is generated by all words of the form:
	$$ Y^{-k} g_i^{\pm 1} Y^k. $$
	Each relator $ r_i $ is conjugated to a word $ r'_i $, which is a product of words of this form.
	
	\item Let $ \widetilde{X} $ be the infinite cyclic cover of the knot complement $ X = S^3 \setminus K $. Then the covering map $ p: \widetilde{X} \rightarrow X $ gives a $ \Lambda \triangleq \mathbb{Z}[t,t^{-1}] $-module isomorphism:
	$$ p_*: H_1(\widetilde{X};\mathbb{Z}) \rightarrow C/[C:C]. $$
	So we can obtain a $ \Lambda $-module presentation of $ C/[C:C] $ by taking the image of $ g_i $ under abelianization as the generator, say $ \alpha_i $, and replacing $ Y^{-k} g_i^{\pm 1} Y^k $ by $ \pm t^k \alpha_i $ in the relator $ r'_i $ (multiplication becomes addition). Therefore, the Alexander polynomial is given by
	$$ H_1(\widetilde{X};\mathbb{Z}) \cong \Lambda/\left(\Delta_K(t)\right). $$
\end{enumerate}

When $ p = 1 $, $ H_1 (\widetilde{X}) $ is a cyclic $ \Lambda $-module generated by a generator $ \alpha $. Rewrite the generator $ g $ of $ G $ by $ a $ and label each $ g $-letter in the relator $ r' $ by $ a_i $ (signed) in sequence. An alternative description of the last step is as follows: Since the relator $ r' $ is a product of words form like $ Y^{-k} g^{\pm 1} Y^{k} $, we define two gradings for each $ a_i $ as
$$ A(a_i) = k, \quad (-1)^{S(a_i)} = \pm 1 \quad {\rm for } \quad Y^{-k} a_i^{\pm 1} Y^k. $$
Then the (unnormalized) Alexander polynomial is
$$ \sum_i (-1)^{S(a_i)} t^{A(a_i)}. $$

\begin{prop}\label{prop:Euler2Alexander}
Let $ \langle X, Y\,|\,R(X, Y) \rangle $ be a quasi-geometric presentation of a $ (1, 1) $ knot $ K $ in $ S^3 $. Algorithm \ref{algo:main} (Step 1-3) gives a chain complex with two relative gradings $ F $ and $ M $, whose Euler characteristic satisfies
	\begin{equation}\label{eq:Euler2Alexander}
		\sum_{i} (-1)^{M(x_i)} t^{F(x_i)} \overset{\circ}{=} \Delta_K(t).
	\end{equation}
Where $f(t)\overset{\circ}{=} g(t)$ if $f(t)=\pm t^c g(t)$ for some integer $c$.
\end{prop}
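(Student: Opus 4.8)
The plan is to match the graded Euler characteristic produced by Algorithm \ref{algo:main} term-by-term with the Fox-calculus computation of $\Delta_K$ recalled above. First I would pass from $\langle X,Y\,|\,R\rangle$ to the Tietze-equivalent presentation $\langle g,Y\,|\,r\rangle$ with $g=XY^{e}$, where $e=\#\{Y\}-\#\{\overline Y\}$ is the exponent sum of $Y$ in $R$; since $[\alpha]\cdot[\beta]=+1$ forces the exponent sum of $X$ to be $+1$, the abelianization sends $Y\mapsto 1$ and $g\mapsto 0$, exactly the normalization required by the procedure with $p=1$. The substitution $X\mapsto gY^{-e}$, $\overline X\mapsto Y^{e}\overline g$ gives a bijection between the $X$-letters $x_i$ of $R$ and the $g$-letters $a_i$ of $r$, under which $(-1)^{S(a_i)}=\sigma_i$, where $\sigma_i=+1$ if $x_i=X$ and $\sigma_i=-1$ if $x_i=\overline X$, and $A(a_i)=k_i$ is the net $Y$-height of $a_i$ in $r$. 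Thus $\Delta_K(t)\overset{\circ}{=}\sum_i\sigma_i\,t^{k_i}$, and it remains to show $F(x_i)=-k_i+c$ and $(-1)^{M(x_i)}=\varepsilon\,\sigma_i$ for a global shift $c$ and sign $\varepsilon$.

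For the Alexander grading I would prove the combinatorial identity $n_z(W_1^n)-n_w(W_1^n)=\mathrm{net}_Y(W_1^n)$ for every primitive disk word, where $\mathrm{net}_Y$ denotes the exponent sum of the $Y$-letters of the word. This holds for the four elementary words by Table \ref{table:elementary_bigons}, and I would propagate it by induction using the decomposition of Lemma \ref{lemma:sum_points}: since $\mathrm{net}_Y$ is a path function it is automatically additive along the height-one bigons $D_i$, so the inductive step reduces to checking $n_z(S)-n_w(S)=l-l'$ for the square $S$ attached to the end subwords $XY^lX$ and $\overline X\,\overline Y^{l'}\overline X$. This is read off from Table \ref{table:b_diff_word} when $|l-l'|=1$ and from Lemmas \ref{lemma:one_height_one} and \ref{lemma:b_same_word} when $l=l'$ (where both $(0,0)$ and $(1,1)$ give difference $0$). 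Combining with the algorithm's relation \eqref{eqn:alexander_relative} gives $F(x_1)-F(x_n)=\mathrm{net}_Y(W_1^n)$. On the other side, a direct bookkeeping of the $Y^{\pm e}$ introduced by the substitution shows that $k_n-k_1$ equals $\mathrm{net}_Y(W_1^n)$ plus a correction proportional to $e\big(\tfrac{\sigma_1+\sigma_n}{2}+\sum_{1<i<n}\sigma_i\big)$; this correction vanishes because a primitive disk word satisfies $\sum_i\sigma_i=\varphi(W_1^n)=0$ and has endpoints of opposite sign $\sigma_1=-\sigma_n$. Hence $F(x_1)-F(x_n)=k_n-k_1=A(a_n)-A(a_1)$, so $F(x_i)+A(a_i)$ is constant along every primitive disk word and therefore globally constant, once one knows the $X$-letters are connected through primitive disk words (which is exactly the connectivity making the relative gradings well defined).

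For the Maslov grading I would use \eqref{eqn:maslov_relative}: along any primitive disk word $M(x_1)-M(x_n)=\pm1-2n_w(W_1^n)$ is odd, while by definition the two ends of a disk word have opposite signs $\sigma_1=-\sigma_n$. Thus crossing a primitive disk word flips both $(-1)^{M}$ and $\sigma$, so the quantity $(-1)^{M(x_i)}\sigma_i$ is invariant under every edge of the primitive-disk-word graph and hence globally equal to a single sign $\varepsilon$. Assembling the two matchings,
\[
\sum_i(-1)^{M(x_i)}t^{F(x_i)}=\varepsilon\,t^{c}\sum_i\sigma_i\,t^{-k_i}\overset{\circ}{=}\Delta_K(t^{-1})\overset{\circ}{=}\Delta_K(t),
\]
the last step being the symmetry $\Delta_K(t^{-1})\overset{\circ}{=}\Delta_K(t)$ of the Alexander polynomial, which also absorbs the reversal of the exponents.

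I expect the main obstacle to be the Alexander matching, specifically verifying $n_z(S)-n_w(S)=l-l'$ uniformly across all the cases of the square domain $S$ and confirming that the substitution correction cancels precisely on primitive disk words; the Maslov parity and the final assembly are comparatively formal. A secondary point to handle carefully is that $F$ and $M$ are genuine relative gradings in the quasi-geometric setting, i.e.\ that the primitive-disk-word graph on the $X$-letters is connected, which is what promotes the edge-wise identities to the global statements $F(x_i)=-k_i+c$ and $(-1)^{M(x_i)}=\varepsilon\sigma_i$.
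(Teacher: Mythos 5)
Your proposal is correct and follows essentially the same route as the paper: the same change of generators $a=Y^kX$ (up to a left/right convention), the same reduction of the Alexander matching via Lemma \ref{lemma:sum_points} to the single identity $n_z(S)-n_w(S)=l-l'$ checked against Table \ref{table:b_diff_word} and Lemmas \ref{lemma:one_height_one}--\ref{lemma:b_same_word}, and the same parity observation $M(x_1)-M(x_n)=\pm 1-2n_w\equiv 1\pmod 2$ for the Maslov sign. The only cosmetic differences are that you package the Alexander step as the identity $n_z-n_w=\mathrm{net}_Y$ and absorb your sign convention with the symmetry $\Delta_K(t^{-1})\overset{\circ}{=}\Delta_K(t)$, which is harmless under $\overset{\circ}{=}$.
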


\begin{proof}
	By assumption, the relator $ R(X, Y) $ maps to $ X + k Y $ for some integer $ k $ under abelianization. Let $ a = Y^kX $ and $ R'(a, Y) $ be the reduced relator of $ R(Y^{-k}a, Y) $. Consider the presentation
	$$ \langle Y, a \,|\, R'(a, Y) \rangle. $$
	It is easy to see that $ Y \mapsto 1 $ and $ a \mapsto 0 $ under abelianization, so that we can do the above procedure. Label each $ a $-letter and the origin $ X $-letter in sequence, and require that $ a_i $ corresponds to the $ i $-th $ X $-letter. Claim that
	\begin{equation}\label{eq:2grdings_are_agree}
		F(x_i) - F(x_j) = A(a_i) - A(a_j), \quad M(x_i) - M(x_j) \equiv S(a_i) - S(a_j) \pmod{2}.
	\end{equation}
	In fact, it is sufficient to show that the two equations hold in the case where $ x_i, x_j $ are two endpoints of a primitive disk word.
	
	Let $ W_1^n $ be a primitive disk word.  By definition, we have $ S(a_1) - S(a_n) \equiv 1 \pmod{2} $. On the other hand,
	$$ M(x_1) - M(x_n) \equiv 1 - 2n_w(W_1^n) \equiv 1 \pmod{2}. $$
	So the second equation is automatically satisfied.
	
For the first equation in \eqref{eq:2grdings_are_agree}, we prove by induction on the length of the primitive disk word $ W_1^n $. For the four elementary disk words, see $ X_1 Y \Xm_2 $ as example, we have $ F(x_1) - F(x_2) = 0 - (-1) = 1 $; on the other hand, the corresponding subword in $ R'(a, Y) $ is $ Y^{-k} a_1 Y a^{-1}_2 Y^{k} $, thus $ A(a_1) - A(a_2) = 1 $ by definition. In general, we see a upward positive disk word $ W_1^n = X_1 Y^l X_2 \cdots \Xm_{n-1} \Ym^{l'} \Xm_n $ as example, where $ |l - l'| \leq 1 $. By Step \ref{step_basepoint} and \eqref{eqn:alexander_relative}, we have
	$$ \begin{aligned}
		F(x_1) - F(x_n) & = n_z(S) + \sum_{i=1}^d n_z(W_{k_i}^{k_{i+1}}) - n_w(S) - \sum_{i=1}^d n_w(W_{k_i}^{k_{i+1}}) \\
		& = n_z(S) - n_w(S) + \sum_{i=1}^d F(x_{k_i}) - F(x_{k_{i+1}}) \\
		& = n_z(S) - n_w(S) + F(x_2) - F(x_{n-1}).
	\end{aligned} $$ 
	On the other hand, the image of $ W_1^n $ in the relator $ R'(a, Y) $ is
	$$ Y^{-k}a_1 Y^{l-k} a_2 \cdots a^{-1}_{n-1} Y^{k-l'} a^{-1}_n Y^{k}. $$
	Thus
	$$ \begin{aligned}
		A(a_1) - A(a_n) & = A(a_1) - A(a_2) + A(a_{n-1}) - A(a_n) + A(a_2) - A(a_{n-1}) \\
		& = l - l' + A(a_2) - A(a_{n-1}).
	\end{aligned} $$
	By the inductive hypothesis, we only need to show that
	$$ n_z(S) - n_w(S) = l - l'. $$
	This is the direct from the definition of $ P(S) $ in Step \ref{step_basepoint_key}. 
	
	Therefore, from the equations \eqref{eq:2grdings_are_agree}, we obtain
	$$ \sum_i (-1)^{M(x_i)} t^{F(x_i)} = \sum_i (-1)^{S(a_i)} t^{A(a_i) + c}, $$
	for some integer $ c $. 
\end{proof}

The presentation \eqref{eqn:trefoil_presentation} does not come from a $(1,1)$ Heegaard diagram of the trefoil knot. Algorithm \ref{algo:main} fails on Step \ref{step_absolute} for \eqref{eqn:trefoil_presentation}: label the $X$-letters in order, and the resulting relator after removing primitive disk words with $ n_w = 0 $ is $ X_2 X_3 Y \Xm_4 \Ym $, and then we cannot remove any letter since
$$ w(x_4) - w(x_3) = w(x_4) - w(x_2) = 1. $$

\begin{mydef}
	Let $ P=\langle X, Y \,|\, R(X, Y) \rangle $ be a quasi-geometric presentation of a group $G$. $P$ is \emph{pseudo-geometric} if
	\begin{enumerate}
		\item The relative Alexander grading can be normalized to satisfy \eqref{eqn:alexander_grading_absolute}.
		\item The relator obtained by the process of deleting subwords in Algorithm \ref{algo:main} Step \ref{step_absolute} has exactly one $X$-letter.
	\end{enumerate}
\end{mydef}

We remark that \eqref{eqn:alexander_grading_absolute} automatically holds for knot groups by Proposition \ref{prop:Euler2Alexander}, but not for general groups. For example, the presentation $\langle X, Y \,|\, YX^2 \Ym XY \Xm^2 \rangle$ is quasi-geometric, but \eqref{eqn:alexander_grading_absolute} does not hold.

For a pseudo-geometric group presentation $G = \langle X, Y\,|\,R(X, Y) \rangle $, Algorithm \ref{algo:main} applies to give a chain complex with trivial differential and two absolute gradings $F$ and $M$. Denote its homology by $H(R)$, and its Poincar\'e polynomial is
\begin{equation}\label{eqn:poincare_poly_pseudo_geometric}
P_R(t, q) = \sum_{i} t^{F(x_i)} q^{M(x_i)}.
\end{equation}

\begin{cor}\label{cor:algorithm_alex_polynomial_pseudo_geometric}
Let $\class{X, Y\,|\,R(X, Y)}$ be a pseudo-geometric presentation of a $ (1,1) $ knot $ K $ in $ S^3 $, then its Poincar\'e polynomial \eqref{eqn:poincare_poly_pseudo_geometric} satisfies
	$$ P_R(t, -1) = \Delta_K(t). $$
\end{cor}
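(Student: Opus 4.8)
The plan is to derive the Corollary from Proposition~\ref{prop:Euler2Alexander}, which already supplies the statement up to normalization. For a pseudo-geometric presentation the gradings $F$ and $M$ are genuinely absolute (conditions (1) and (2) are precisely what make them so), and with these absolute gradings Proposition~\ref{prop:Euler2Alexander} upgrades its relation \eqref{eq:Euler2Alexander} to
$$ P_R(t,-1)=\sum_i(-1)^{M(x_i)}t^{F(x_i)}=\epsilon\,t^{c}\,\Delta_K(t) $$
for some sign $\epsilon\in\{+1,-1\}$ and some integer $c$, where $\Delta_K$ is the symmetrized, normalized Alexander polynomial, so that $\Delta_K(t)=\Delta_K(t^{-1})$ and $\Delta_K(1)=1$. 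The entire task is therefore to show that $c=0$ and $\epsilon=+1$: condition~(1) normalizes the absolute Alexander grading and will pin down $c$, while condition~(2) normalizes the absolute Maslov grading and will pin down $\epsilon$.

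First I would fix the shift $c$ by a parity argument. Write $P_R(t,-1)=\sum_s b_s t^s$ with $b_s=\sum_{\{i\,:\,F(x_i)=s\}}(-1)^{M(x_i)}$, and set $n_s=\#\{i:F(x_i)=s\}$; since each summand is $\pm1$ we have $b_s\equiv n_s\pmod 2$. Condition~(1) is exactly the symmetry \eqref{eqn:alexander_grading_absolute}, namely $n_s\equiv n_{-s}\pmod 2$, whence $b_s\equiv b_{-s}\pmod 2$ for all $s$; that is, $P_R(t,-1)$ is palindromic modulo $2$. Comparing this with $P_R(t,-1)=\epsilon t^{c}\Delta_K(t)$ and using $\Delta_K(t)=\Delta_K(t^{-1})$ shows that the mod-$2$ reduction of the coefficient sequence of $\Delta_K$ is invariant under the shift $j\mapsto j+2c$. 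Because $\Delta_K$ has finite support and $\Delta_K(1)=1$ is odd, its coefficients are not all even, so no nonzero period can occur; hence $c=0$.

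Next I would fix the sign $\epsilon$ by evaluating at $t=1$. With $c=0$ and $\Delta_K(1)=1$ this gives $\epsilon=P_R(1,-1)=\sum_i(-1)^{M(x_i)}$, the signed count of generators. I would then argue that this count is unchanged by the reduction of Step~\ref{step_absolute}: each primitive bigon erased there cancels a pair of endpoints $x_a,x_b$ whose Maslov gradings, by \eqref{eqn:maslov_relative}, differ by an odd integer, so that $(-1)^{M(x_a)}+(-1)^{M(x_b)}=0$ and the signed count is preserved. Condition~(2) guarantees that the procedure terminates with a single surviving $X$-letter, to which Step~\ref{step_absolute} assigns Maslov grading $0$; it contributes $(-1)^0=+1$. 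Hence $\sum_i(-1)^{M(x_i)}=1$, so $\epsilon=+1$ and $P_R(t,-1)=\Delta_K(t)$, as claimed.

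The step I expect to be the main obstacle is the sign computation—specifically, justifying that the deletions of Step~\ref{step_absolute} assemble into an honest pairing of generators with odd Maslov difference, even though a pseudo-geometric presentation need not arise from a $(1,1)$ Heegaard diagram. In the geometric case one can invoke that the erasures are isotopies of $\beta$ across bigons disjoint from $w$, so that the signed count equals $\chi(\widehat{HF}(S^3))=1$; in the general pseudo-geometric setting this geometric input is unavailable, and one must instead verify combinatorially that every erased primitive bigon (including the composite elementary bigons $\Xm^{k}Y^{\pm1}X^{k}$) removes its generators in Maslov-odd pairs and that the two deletion passes never strand or doubly-count a generator. Establishing this bookkeeping, independent of the order of removals, is the delicate point; the determination of $c$, by contrast, is purely formal once Proposition~\ref{prop:Euler2Alexander} is in hand.
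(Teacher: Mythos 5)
The paper states this corollary with no proof at all, treating it as immediate from Proposition \ref{prop:Euler2Alexander}, so your argument is exactly the completion the authors leave implicit, and it is correct: the mod-$2$ palindrome argument pins down $c=0$, and the signed count of generators pins down $\epsilon=+1$. As for the bookkeeping you flag as the delicate point, it can be made order-independent at once: every deletion in Step \ref{step_absolute} (in either pass) removes the two ends of a primitive disk word of the current relator, and such ends carry opposite $X$-signs by Definition \ref{def:disk_word}, while every primitive bigon of $R$ also flips the Maslov parity by \eqref{eqn:maslov_relative}, so $(-1)^{M(x)}=\sigma\cdot\mathrm{sign}(x)$ for a single global sign $\sigma$ (sign parity and Maslov parity change in lockstep along chains of primitive bigons, which connect all generators); hence $\sum_i(-1)^{M(x_i)}=\sigma\,\bigl(\#X-\#\Xm\bigr)=\sigma$, and $\sigma=+1$ because each deletion removes one positive and one negative letter, preserving $\#X-\#\Xm=1$ and forcing the unique survivor to be a positive letter, which carries $M=0$.
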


However, a pseudo-geometric presentation of $\pi_1(S^3 \setminus K)$ for a $(1,1)$ knot $K$ does not always come from a $(1,1)$ Heegaard diagram. Algorithm \ref{algo:main} can compute the knot Floer homology of $K$ from a pseudo-geometric presentation correctly in some cases (Example \ref{example:not_geometric_but_homology}) and incorrectly in other cases (Example \ref{example:pseudo_but_not_homology}).

\begin{example}\label{example:not_geometric_but_homology}
 The fundamental group of the torus knot $ T_{p,q} $ has the following presentation (by \cite{MP71}):
 $$  \langle W, Z \,|\, (W^p Z^{t})^s Z \rangle, $$
 where $ q = ts + 1 $. Consider the case that $ p = 2, q = 7, s = 2, t = 3 $. By setting $ Z \mapsto  \Xm, W \mapsto X^2 Y $, we get the following presentation of $\pi_1\bracket{S^3\setminus T_{2,7}}$:
 \begin{equation}\label{pre:bad_T[27]}
 \langle X, Y \,|\,  Y X^2Y \Xm Y X^2Y \Xm^2  \rangle.
 \end{equation}
Note that all primitive disk words are elementary, so this presentation is pseudo-geometric. Algorithm \ref{algo:main} applied to \eqref{pre:bad_T[27]} is computed as $ \widehat{HFK}(S^3, T_{2,7}) $. However, the presentation \eqref{pre:bad_T[27]} does not come from a $(1,1)$ Heegaard diagram.
\end{example}

\begin{example}\label{example:pseudo_but_not_homology}
	The following is a pseudo-geometric presentation of $ \pi_1(S^3 \setminus D_+(T_{2,3}, 6)) $:
	\begin{equation}\label{eq:pre_knotD6_bad}
		\langle X, Y \,|\, X^4 \Ym \Xm^{3} Y X^4 Y \Xm^{3} \Ym X \Ym \Xm^{3} Y X^4 Y \Xm^{3} \Ym \rangle,
	\end{equation}
	which is obtained from the presentation \eqref{eq:pre_knotD6} by performing the transformation $ Y \mapsto XY $. Label each $ X $-letter in order. To see that it is pseudo-geometric, consider the disk word $ D_1 $ from the 5-th to the 10-th $ X $-letter, and the disk word $ D_2 $ from the 12-th to the 4-th $ X $-letter, both primitive. Applying the algorithm gives
	$$ P(D_1) = (1, 0), \, P(D_2) = (-1, 0), $$
	so that both are deleted in Step \ref{step_absolute} and the resulting relator is $ \Ym X_{11} Y $, which has a unique $ X $-letter.

Algorithm \ref{algo:main} yields $ F = M $ for all generators, and the resulting Poincar\'e polynomial is $ P(t,q) = 6q^{-1}t^{-1} + 13 + 6qt $, which is different from \eqref{eq:pp_knotD6}. It follows that \eqref{eq:pre_knotD6_bad} does not come from a $(1,1)$ Heegaard diagram of $D_+(T_{2,3}, 6)$.
\end{example}


Algorithm \ref{algo:main} does not a priori require the group to be the fundamental group of a $(1,1)$ knot. We can apply Algorithm \ref{algo:main} for any pseudo-geometric two-generator one-relator group presentations. But we do not know whether it yields any invariant of the group, and what properties it captures.


\end{document}